\newcommand{\Z}{{\mathbb Z}}
\newcommand{\VV}{{\mathbb V}}
\newcommand{\WW}{{\mathbb W}}
\newcommand{\ZZ}{{\mathbb Z}}
\newcommand{\MM}{{\mathcal M}}
\newcommand{\TT}{{\mathbb T}}
\newcommand{\DD}{{\mathbb D}}
\def\ol{\overline}
\newtheorem{prop}{Proposition}[section]
\newtheorem{corollary}[prop]{Corollary}
\newtheorem{lemma}[prop]{Lemma}
\newtheorem{theorem}[prop]{Theorem}
\newtheorem{proposition}[prop]{Proposition}
\newtheorem{conjecture}[prop]{Conjecture}
\theoremstyle{definition}
\newtheorem{defi}[prop]{Definition}
\newtheorem{exmp}[prop]{Example}
\theoremstyle{remark}
\newtheorem{remark}[prop]{Remark}
\newtheorem{remarks}[prop]{Remarks}
\newcommand{\benu}{\begin{enumerate}}
\newcommand{\enu}{\end{enumerate}}
\newcommand{\beqna}{\begin{eqnarray}}
\newcommand{\eqna}{\end{eqnarray}}
\newcommand{\beqnast}{\begin{eqnarray*}}
\newcommand{\eqnast}{\end{eqnarray*}}
\newcommand{\beqn}{\begin{equation}}
\newcommand{\eqn}{\end{equation}}
\newcommand{\beqnst}{\begin{equation*}}
\newcommand{\eqnst}{\end{equation*}}
\newcommand{\bema}{\left ( \begin{array}}
\newcommand{\ema}{\end{array} \right )}
\newcommand{\Hom}{\operatorname{Hom}}
\newcommand{\End}{\operatorname{End}}
\newcommand{\ot}{\otimes}
\def\Aa{{\mathcal A}}
\def\ul{\underline}
\newcommand{\thlabel}[1]{\label{th:#1}}
\newcommand{\thref}[1]{Theorem~\ref{th:#1}}
\newcommand{\selabel}[1]{\label{se:#1}}
\newcommand{\seref}[1]{Section~\ref{se:#1}}
\newcommand{\lelabel}[1]{\label{le:#1}}
\newcommand{\leref}[1]{Lemma~\ref{le:#1}}
\newcommand{\prlabel}[1]{\label{pr:#1}}
\newcommand{\prref}[1]{Proposition~\ref{pr:#1}}
\newcommand{\relabel}[1]{\label{re:#1}}
\newcommand{\exlabel}[1]{\label{ex:#1}}
\newcommand{\exref}[1]{Example~\ref{ex:#1}}
\newcommand{\eqlabel}[1]{\label{eq:#1}}
\newcommand{\equref}[1]{(\ref{eq:#1})}
\def\Cc{{\mathcal C}}
\def\Mm{{\mathcal M}}
\def\Hh{{\mathcal H}}
\begin{document}

\title[Dilations of partial representations of Hopf algebras]{Dilations of partial representations of Hopf algebras}

\author[M.M.S.\ Alves]{Marcelo \ Muniz \ S. \ Alves}
\address{
   Marcelo \ Muniz \ S. \ Alves\\
Departamento de Matem\'atica,\\ Universidade Federal do Paran\'a,\\ Brazil   }
   \email{marcelomsa@ufpr.br}
\author[E.\ Batista]{Eliezer Batista}
\address{
   Eliezer Batista\\
   Departamento de Matem\'atica,\\ Universidade Federal de Santa Catarina,\\ Brazil}
   \email{ebatista@mtm.ufsc.br}
\author[J.\ Vercruysse]{Joost Vercruysse}
\address{
   Joost Vercruysse\\
   D\'epartement de Math\'ematiques,\\ Universit\'e Libre de Bruxelles,\\ Belgium}
   \email{jvercruy@ulb.ac.be}

\subjclass{Primary 16T05; Secondary 16S40, 18D10}

\begin{abstract} We introduce the notion of a dilation for a partial representation (i.e. a partial module) of a Hopf algebra, which in case the partial representation origins from a partial action (i.e.\ a partial module algebra) coincides with the enveloping action (or globalization). 
This construction leads to categorical equivalences between the category of partial $H$-modules, a category of (global) $H$-modules endowed with a projection satisfying a suitable commutation relation and the category of modules over a (global) smash product constructed upon $H$, from which we deduce the structure of a Hopfish algebra on this smash product. These equivalences are used to study the interactions between partial and global representation theory. 
\end{abstract}

\maketitle

\section{Introduction}
The importance of symmetry is undeniable in all areas of Mathematics. Symmetries are useful tools to reduce the complexity of systems and to find unexpected correlations between the studied objects. 
However, in many situations, one is only interested in a small portion of a larger object of which the symmetries are well-described, but where these global symmetries do not necessarily restrict to the considered sub-object. In such a case, it 
is useful to describe, at least partially, what remains from that global symmetry. This question has lead to the theory of partial actions. The first appearance of partial group actions occurred in the context of operator algebras \cite{E1}, in which the author investigated $\mathbb{Z}$-graded $C^*$-algebras which were not described as a usual crossed product. The theory of partial actions soon became important in the theory of dynamical systems \cite{ELQ} and, after the seminal paper \cite{DE}, attracted the attention of pure algebraists as well. For an overview of the historical development of the theory of partial actions see, for example \cite{B} and \cite{D}.

Since natural examples of partial actions result from restricting global actions, the question arises if all partial actions emerge this way, i.e.\ whether it is possible to construct a ``globalization'' for any given partial action. 
More precisely, given a partial action of a group $G$ on a set $X$ is it possible to find a global action of the same group $G$ on another set $Y$, containing $X$, such that the original partial action can be recovered by restricting the global action on $Y$ to a partial action on $X$ ? In the case of partial group actions on sets, this question was positively answered in \cite{Abadie1}. In more involved settings, some additional complications can appear. For example, when $X$ is considered as a Hausdorff topological space, in general the space which carries the globalized action is no longer Hausdorff. On the algebraic side, several globalization theorems for partial group actions were proved. In \cite{DE} the globalization of a partial action of a group $G$ over a unital algebra $A$ is achieved if, and only if, the partial domains $A_g$, for each $g\in G$, are unital ideals, that is, they are ideals of $A$ generated by a central idempotent $1_g$. In \cite{DES} the globalization for twisted partial actions was obtained. In \cite{ADES}, it was proved that every partial action of a group $G$ over an algebra $A$, such that the ideals $A_g$ are idempotent, is Morita equivalent to a globalizable partial action. 
The globalization theorem also enabled further developments: for instance, the Galois theory for partial group actions, introduced in \cite{DFP}, strongly relies on the globalization theorem.

Partial (co)actions of Hopf algebras came on the scene in \cite{CJ} and they were motivated first by an attempt to describe Galois theory for partial group actions into the framework of Galois corings. One of the first results obtained in the theory of partial actions of Hopf algebras was exactly the globalization theorem, which asserts that every partial action of a Hopf algebra $H$ on a unital algebra $A$ admits a globalization \cite{AB},  which however is not necessarily unital. The globalization theorem triggered several new results. For example in \cite{AB3} the authors obtained a version of Blattner-Montgomery theorem for the case of partial actions, generalizing a result in \cite{L} for the group case. Globalization theorems were also obtained in other contexts such as partial actions of Hopf algebras on $k$-linear categories \cite{AAB} and twisted partial actions of Hopf algebras \cite{ABDP}.

The aim of this present article is to extend the notion of globalization to the setting of partial $H$-modules. Let us first explain what these partial modules are.

A partial action of a Hopf algebra $H$ on a unital algebra $A$, is linear map $H\ot A\to A$ satisfying suitable coherence axioms that are a weakening of the classical axioms for an $H$-module algebra. However, 
where a (global) $H$-module algebra can be understood as an $H$-module that is at the same time an algebra such that both structures are compatible, the associativity axiom of a partial action $H\ot A\to A$
explicitly makes use of the algebra structure on $A$.  
The basic motivation of \cite{ABV} was to restore this situation by recovering a module property for partial actions, such that partial actions can be viewed as algebras in a suitable category of partial modules. For this purpose, the authors introduced the notion of a partial representation of a Hopf algebra, extending the existing notion for groups introduced in \cite{DE} and developed in \cite{DEP}. 
Given a Hopf algebra $H$, a universal Hopf algebroid $H_{par}$ was constructed in \cite{ABV}, with the property that the category of partial modules coincides with the (global) modules over $H_{par}$, ensuring a closed monoidal structure on the category of partial modules. 

Every ordinary $H$-module can be viewed as a partial $H$-module in a trivial way.  In view of the above globalization theorems, the natural question arises whether 
given a partial $H$-module $M$, is it possible to define in a canonical way an $H$-module $N$ such that the partial $H$-module structure on $M$ could be obtained by a restriction of the  $H$-module structure on $N$. This globalized module is what we call throughout this article a {\em dilation} of the partial representation of $H$ on the partial $H$-module $M$. The notion of a dilation for a partial representation of a group $G$ was first introduced by F. Abadie in \cite{Abadie2} (see also \cite{Abadie3} for some unpublished recent developments in the subject). In this present work, we define a new category $\mathbb{T}({}_H \mathcal{M})$, whose objects are $H$-modules which admit a projection satisfying a specific commutation relation with its $H$-module structure. We prove that there is a categorical equivalence between the category of partial $H$-modules and this category $\mathbb{T}({}_H \mathcal{M})$. Moreover, there is a monoidal structure on this category such that this equivalence is a strong monoidal functor. Finally, as the universal Hopf algebroid $H_{par}$ is known to be isomorphic to a partial smash product $A\# H$, which in turn is Morita equivalent with the smash product $\ol{A}\# H$ associated to the globalization of $A$. This Morita equivalence induces the structure of a Hopfish algebra on $\ol{A}\# H$.

This article is organized as follows: In section 2, we briefly recall the definitions and main results on partial representations of Hopf algebras as developed in \cite{ABV}. Moreover, we investigate the interaction between partial and global representation theory and illustrate the results by classifying in detail all partial representations of the unique two-dimensional Hopf algebra and Sweedler's four-dimensional Hopf algebra. In section 3, we introduce a new category $\mathbb{T}({}_H \mathcal{M})$ of $H$-modules with projections satisfying a specific commutation relation. We prove that for each object in the category $\mathbb{T}({}_H \mathcal{M})$ 
there is a canonical way to produce a partial $H$-module by using the projection; this defines a restriction functor, $R$, from the category  $\mathbb{T}({}_H \mathcal{M})$ into the category ${}_H\mathcal{M}^{par}$. In section 4, we introduce the concept of a 
dilation of a partial $H$-module and show that every partial $H$-module admits a standard dilation which is minimal and unique up to isomorphism. Moreover we show that the standard dilation provides a quasi-inverse for the restriction functor described above, and therefore the category $\mathbb{T}({}_H \mathcal{M})$ is equivalent to the category of partial $H$-representations.
Finally, in Section 5, we provide equivalent conditions for the dilation functor to be exact, which means that it is equivalent to the functor $\ol{H_{par}}\ot_{H_{par}}-$ and therefore, to know any dilation it is enough to understand the dilation of the universal Hopf algebroid $H_{par}$. Since we know moreover that $H_{par}$ can be described as a particular partial smash product $\ul{A\# H}$, we investigate the dilation of such smash products, which in turn is closely related to the globalization of the partial $H$-module algebra $A$. We finish by exploring the relation between dilations and the Morita context, constructed in \cite{AB}, which relates the category of partial modules with the category of modules over the (non-unital) global smash product $\ol A\# H$. Combining the results, it then follows that the algebra $\ol A\# H$ can be endowed with the structure of a Hopfish algebra.

Throughout the paper, all algebras and coalgebras are supposed to be over a fixed commutative field $k$ (for the basic constructions, it is enough to suppose that $k$ is a commutative ring). Unadorned tensor products are tensor products over $k$, and linear maps are $k$-linear maps. 
Given an algebra $A$ we  
denote the categories of left $A$-modules, right $A$-modules and $A$-bimodules respectively by 
${}_A \mathcal{M}$, $\Mm_A$ and ${}_A \mathcal{M}_A$.

\section{Global modules and partial modules}\selabel{globpart}

In this section we recall a few results from \cite{ABV} about partial representations of Hopf algebras. For more details and the relation with the group case we refer to \cite{ABV} or the review \cite{B} and the references therein.

A {\em partial action} of a Hopf algebra $H$ on a unital algebra $A$ is a linear map 
\[
\begin{array}{lccr} \cdot : & H\otimes A & \rightarrow & A \\
\, & h\otimes a & \mapsto & h\cdot a
\end{array}
\]
satisfying the following three axioms
\begin{enumerate}[({PA}1)]
\item $1_H \cdot a=a$, for every $a\in A$;
\item $h\cdot (ab)=(h_{(1)}\cdot a)(h_{(2)}\cdot b)$, for every $h\in H$ and $a,b \in A$;
\item $h\cdot (k\cdot a)=(h_{(1)}\cdot 1_A )(h_{(2)}k\cdot a)$, for every $h,k \in H$ and $a\in A$.
\end{enumerate}
A partial action is said to be symmetric if it also satisfies
\begin{enumerate}
\item[({PA}3')] $h\cdot (k\cdot a)=(h_{(1)}k\cdot a)(h_{(2)}\cdot 1_A )$, for every $h,k \in H$ and $a\in A$. 
\end{enumerate}
Throughout this text, we will assume that the partial actions are symmetric. A partial action is sometimes called a {\em partial $H$-module algebra}. Notice that $A$ is not an $H$-module but a partial $H$-module. To introduce this notion, let us first recall that a {\em partial representation} of a Hopf algebra $H$ is a unital algebra $B$ endowed with a linear map 
$\pi: H \rightarrow B$ such that 
\begin{enumerate}[{({PR}1)}]
\item $\pi (1_H)  =  1_B$; \label{partialrep1}
\item $\pi (h) \pi (k_{(1)}) \pi (S(k_{(2)}))  =   \pi (hk_{(1)}) \pi (S(k_{(2)})) $;
\label{partialrep2}
\item $\pi (h_{(1)}) \pi (S(h_{(2)})) \pi (k)  =   \pi (h_{(1)}) \pi (S(h_{(2)})k)$; \label{partialrep3} 
\item $\pi (h) \pi (S(k_{(1)})) \pi (k_{(2)}) = \pi (hS(k_{(1)})) \pi (k_{(2)})$; \label{partialrep4}
\item $\pi (S(h_{(1)}))\pi (h_{(2)}) \pi (k) = \pi (S(h_{(1)}))\pi (h_{(2)} k)$. \label{partialrep5}
\end{enumerate}
In case $H$ is cocommutative (e.g.\ a group algebra), then the last two axioms become redundant.

A (left) {\em partial module} over $H$ is a pair $(M,\pi)$, where $M$ is a $k$-vector space and $\pi:H\to \End_k(M)$ is a (left) partial representation of $H$. 
If $(M,\pi)$ and $(M',\pi')$ are partial $H$-modules, then a morphism of partial $H$-modules is a $k$-linear map $f:M\to M'$ satisfying $f\circ \pi(h)= \pi'(h)\circ f$ for all $h\in H$. 
The category which has partial $H$-modules as objects and morphisms between them as defined above is denoted by ${{}_H\mathcal{M}}^{par}$.

Out of a partial action $(A,\cdot)$, one can construct two examples of partial representations: the first is $(\End(A), h\mapsto h\cdot -)$ (which means exactly that $A$ is a partial $H$-module), the second is $(\ul{A\# H}, h\mapsto 1_A\# h)$ where $\ul{A\# H}$ denotes the partial smash product $$\underline{A\#H}=\{a\#h=a(h_{(1)}\cdot 1)\ot h_{(2)}|a\ot h\in A\ot H\} ,$$ which is a unital algebra with product $(a\# h)(b\# k)=a(h_{(1)}\cdot b)\# h_{(2)}k$. 

The study of partial representations and partial modules over a Hopf algebra $H$ is equivalent to the study of global representations and global modules over a new algebra $H_{par}$, which is called the {\em partial Hopf algebra of $H$}. (As we will explain below, $H_{par}$ is however not a Hopf algebra, but a Hopf algebroid.) Let us recall this construction. Consider the tensor algebra $T(H)$ over the underlying $k$-module of $H$, and denote by $[h]\in T(H)$ the element in the tensor algebra associated to any $h\in H$. Then $H_{par}$ is the quotient of $T(H)$ by the ideal generated by the following relations
\begin{enumerate}
\item $[1_H] = 1_{H_{par}}$; 
\item $[h][k_{(1)}][S(k_{(2)})] = [hk_{(1)}][S(k_{(2)})]$, for all $h,k \in H$;
\item $[h_{(1)}][S(h_{(2)})][k] = [h_{(1)}][S(h_{(2)})k]$, for all $h,k \in H$;
\item $[h][S(k_{(1)})][k_{(2)}] = [hS(k_{(1)})][k_{(2)}]$, for all $h,k \in H$;
\item $[S(h_{(1)})][h_{(2)}][k] = [S(h_{(1)})][h_{(2)}k]$, for all $h,k \in H$.
\end{enumerate}

\noindent
The next theorem collects the main results of \cite{ABV}. 

\begin{theorem}
Let $H$ be a Hopf algebra with invertible antipode, and $H_{par}$ the associated algebra as introduced above. Moreover, denote by $A_{par}$ the subalgebra of $H_{par}$ generated by the elements of the form $\varepsilon_h:=[h_{(1)}][S(h_{(2)})]$ for all $h\in H$.
Then the following statements hold.
\begin{enumerate}[(i)]
\item The map $\cdot:H\ot A_{par}\to A_{par}, h\cdot \varepsilon_k= \varepsilon_{h_{(1)}k}\varepsilon_{h_{(2)}}$ defines a partial action of $H$ on $A_{par}$;
\item $H_{par} \cong \underline{A_{par}\# H}$ and moreover $H_{par}$ has a structure of a Hopf algebroid over the base algebra $A_{par}$;
\item For any partial representation $\pi :H\rightarrow B$, there is a unique algebra morphism
$\tilde{\pi}: H_{par} \rightarrow B$ such that $\pi =\tilde{\pi}\circ [\underline{\; }]$;
\item A partial $H$-module $(M,\pi )$, can be endowed with an $H_{par}$-module structure by 
\[
[h^1]\ldots [h^n]\triangleright m =\pi (h^1)(\ldots \pi (h^n)( (m))\ldots ).
\]
Conversely, given a left $H_{par}$-module $M$, one obtains a partial representation $\pi :H\rightarrow \mbox{End}_k (M)$ by
\[
\pi (h) (m) =[h]\triangleright m ;
\]
\item The above correspondences between partial $H$-modules and (global) $H_{par}$-modules induces an isomorphism of categories between ${}_H \mathcal{M}^{par}$ and ${}_{H_{par}}\mathcal{M}$. Consequently:
\begin{enumerate}
\item The category ${}_H\mathcal{M}^{par}$ has the structure of a closed monoidal category and the forgetful functor $U:{}_H \mathcal{M}^{par} \rightarrow {}_{A_{par}} \mathcal{M}_{A_{par}}$ is a strict monoidal functor that preserves internal homs;
\item 
The forgetful functor $U:{}_H \mathcal{M}^{par} \rightarrow {}_{A_{par}} \mathcal{M}_{A_{par}}$ is representable, and represented by the algebra $H_{par}$;
\end{enumerate}
\item The category of algebra objects in the monoidal category $({}_H \mathcal{M}^{par} ,\otimes_{A_{par}} ,A_{par})$ coincides with the category of partial actions (i.e. partial module algebras) over $H$.
\end{enumerate}
\end{theorem}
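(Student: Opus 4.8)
The backbone of the statement is the universal property (iii), which is essentially built into the construction of $H_{par}$. Since $H_{par}$ is the quotient of the tensor algebra $T(H)$ by the ideal generated by the relations (1)--(5), and these relations are exactly the images under $[\underline{\;}]$ of the partial-representation axioms (PR1)--(PR5), the universal property of $T(H)$ gives at once that, for every unital algebra $B$, algebra morphisms $H_{par}\to B$ correspond bijectively and naturally to linear maps $H\to B$ satisfying (PR1)--(PR5), i.e.\ to partial representations. I would establish (iii) this way, and then read off (iv) as the special case $B=\End_k(M)$ in both directions, the formulas $[h^1]\cdots[h^n]\triangleright m=\pi(h^1)(\cdots\pi(h^n)(m)\cdots)$ and $\pi(h)(m)=[h]\triangleright m$ being mutually inverse translations between algebra maps $H_{par}\to\End_k(M)$ and partial representations on $M$. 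The isomorphism of categories in (v) then follows by checking that a $k$-linear $f\colon M\to M'$ intertwines the $\pi$'s precisely when it is $H_{par}$-linear, which is immediate from $\pi(h)=[h]\triangleright(-)$.

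Parts (i) and (ii) are best treated together, since both rest on the same structural identities inside $H_{par}$. From relation (3) one derives the key merge identity $[h][k]=\varepsilon_{h_{(1)}}[h_{(2)}k]$, and in particular (taking $k=1_H$) $[h]=\varepsilon_{h_{(1)}}[h_{(2)}]$. Iterating the merge identity yields the normal form: every element of $H_{par}$ is a finite sum $\sum_i a_i[h_i]$ with $a_i\in A$, i.e.\ $H_{par}=A\cdot[H]$. With this, the assignment $a\#h\mapsto a[h]$ defines a surjective algebra map $\underline{A\#H}\to H_{par}$ whose inverse is supplied by the normal form; its multiplicativity amounts to the covariance relation $[h]\,b=(h_{(1)}\cdot b)[h_{(2)}]$ for $b\in A$, which reproduces the partial-smash product $(a\#h)(b\#k)=a(h_{(1)}\cdot b)\#h_{(2)}k$. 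This same covariance relation, read on $b=\varepsilon_k$, is precisely the formula $h\cdot\varepsilon_k=\varepsilon_{h_{(1)}k}\varepsilon_{h_{(2)}}$ of (i), so once well-definedness is checked the axioms (PA1)--(PA3') follow from associativity in $H_{par}$ together with the antipode and counit identities of $H$. The Hopf algebroid structure over $A$ is then transported along the isomorphism: source and target are the two embeddings of $A$, the comultiplication is induced by $[h]\mapsto[h_{(1)}]\otimes_A[h_{(2)}]$, and counit and antipode are induced by those of $H$ (with $[h]\mapsto[S(h)]$), each defined on generators and extended using the normal form.

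Granting (ii), parts (v)(a)--(b) are formal. The comultiplication of the Hopf algebroid equips ${}_{H_{par}}\mathcal{M}\cong{}_H\mathcal{M}^{par}$ with the tensor product $\otimes_A$ and unit $A$ and makes the forgetful functor $U\colon{}_H\mathcal{M}^{par}\to{}_A\mathcal{M}_A$ strict monoidal; the antipode supplies the internal homs, so $U$ preserves them and the category is closed; and representability of $U$ by $H_{par}$ is the standard identification ${}_A\mathcal{M}_A(A,U(-))\cong\Hom_{H_{par}}(H_{par},-)$. For (vi), I would unwind the notion of an algebra object $(M,m\colon M\otimes_A M\to M,u\colon A\to M)$ in $({}_H\mathcal{M}^{par},\otimes_A,A)$. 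Forgetting along $U$, the $A$-bilinear, $H_{par}$-linear multiplication and unit make $M$ a unital algebra, and the compatibility of $m$ and $u$ with the $H_{par}$-action translates, via $\pi(h)=[h]\triangleright(-)$ and the comultiplication $[h]\mapsto[h_{(1)}]\otimes_A[h_{(2)}]$, into exactly the axioms (PA1)--(PA3') for the induced map $h\otimes m\mapsto\pi(h)(m)$; conversely a partial module algebra manifestly provides such an algebra object, and the two notions of morphism coincide.

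The main obstacle I anticipate is the Hopf algebroid part of (ii): verifying that $[h]\mapsto[h_{(1)}]\otimes_A[h_{(2)}]$ is well defined into the Takeuchi $\times_A$-product and that the resulting bialgebroid and antipode axioms hold. These must be checked on the generators $[h]$ and on $A$ and then extended through the normal form, and the bookkeeping of the $A$-balancing in $\otimes_A$ is where the genuine care is needed; everything else reduces to the single merge identity and routine Sweedler manipulations.
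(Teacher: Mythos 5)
The paper does not actually prove this theorem: it is stated as a summary of the main results of the reference [ABV] (``The next theorem collects the main results of \cite{ABV}''), so there is no in-paper proof to compare against. Measured against the actual arguments in that reference, your outline follows essentially the same strategy: (iii) from the universal property of the tensor algebra, (iv)--(v) as the specialization $B=\End_k(M)$, the merge identity $[h][k]=\varepsilon_{h_{(1)}}[h_{(2)}k]$ (which does follow from relation (3) applied to $k'=h_{(3)}k$) and the resulting normal form $H_{par}=A\cdot[H]$ as the engine behind (i) and (ii), and the Hopf algebroid structure transported to $\ul{A\# H}$, with (v)(a)--(b) and (vi) as formal consequences. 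Two places where your sketch is thinner than the real argument deserve mention. First, the inverse of $a\# h\mapsto a[h]$ is \emph{not} ``supplied by the normal form'': the normal form gives surjectivity, but the expression $\sum_i a_i[h_i]$ is not obviously unique, so injectivity cannot be read off from it. The standard route is to observe that $h\mapsto 1_A\# h$ is a partial representation of $H$ on $\ul{A\# H}$ and to invoke your own part (iii) to produce the algebra map $H_{par}\to\ul{A\# H}$ going the other way, then check the two composites on generators. Second, the well-definedness of $h\cdot\varepsilon_k=\varepsilon_{h_{(1)}k}\varepsilon_{h_{(2)}}$, which you flag but do not resolve, is genuinely an issue because $A$ is only given by generators; it is handled by defining $h\cdot a:=[h_{(1)}]\,a\,[S(h_{(2)})]$ for all $a\in A$, which is manifestly well defined, showing it lands in $A$, and verifying it agrees with the stated formula on the $\varepsilon_k$. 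With those two repairs, and the Takeuchi-product bookkeeping for the coproduct that you correctly identify as the main technical burden, your plan is sound.
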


Throughout this article we will assume that the Hopf algebra $H$ has an invertible antipode.

\begin{exmp} \label{examplekparG} For $H=kG$, the group algebra of a group $G$. The partial ``Hopf'' algebra $(kG)_{par}$ coincides with the 
construction of the partial group algebra $k_{par}G$ given in \cite{DE}. 
\end{exmp}

\begin{exmp}\exlabel{trivialmod} Every $H$-module can be viewed as a partial $H$-module, defining the partial representation as the algebra morphism $\pi :H\rightarrow \End_k (M)$ given by $\pi (h)(m)=h\triangleright m$. On the other hand, a partial $H$-module 
$(M, \pi )$ is an ordinary $H$-module if, and only if, $\pi (h_{(1)}) \pi (S(h_{(2)}))=\epsilon (h)\mbox{Id}_M$, for every $h\in H$.

More precisely, the category of $H$-modules can be seen as a subcategory of the category of partial $H$-modules by means of a canonical inclusion functor $i:{}_H \mathcal{M} \rightarrow {}_H \mathcal{M}^{par}$. 
If $M$ is a global $H$-module, then the action of $A_{par}$ on $i(M)=M$ is trivial: $\epsilon_h\cdot m=\epsilon(h)m$. Hence, given two global $H$-modules upon which we apply the inclusion functor, we find that $i(M)\ot_{A_{par}} i(N)=M\ot N$. However the inclusion functor is not strictly monoidal (however still monoidal), since the monoidal unit is not preserved, as the morphism $A_{par}\to k, \epsilon_h\mapsto \epsilon(h)$ is not bijective.

Obviously, if there is an epimorphism of partial $H$-modules $p:N\to M$, where $N$ is any (global) $H$-module and $M$ is any partial $H$-module, then $M$ is global. Henceforth, ${}_H\Mm$ is the full subcategory of ${}_H\Mm^{par}$ generated by $H$ (or any other generator of ${}_H\Mm$).
 
\end{exmp}

To further investigate the relation between partial and global modules, we introduce some new definitions.

\begin{defi}
Let $M$ be a partial $H$-module. The {\em global core} of $M$ is the biggest global submodule of $M$. The {\em global shadow} is the biggest global quotient of $M$.
A partial $H$-module $M$ is called {\em pure} if it does not contain any non-trivial global $H$-module.
\end{defi}

Clearly, $H$ is the global shadow of $H_{par}$. In general, we have the following result.

\begin{theorem}
Let $M$ be a partial $H$-module. Then the global core of $M$ is given by
$$c(M)=\{m\in M~|~ g\cdot (h\cdot m)=(gh)\cdot m, \forall h,g\in H\}$$
and $c$ is a right adjoint to the inclusion functor.
The global shadow of $M$ is given by
$$s(M)=M/<h\cdot (g\cdot m)-(gh)\cdot m>$$
and $s$ is a left adjoint to the inclusion functor.
\end{theorem}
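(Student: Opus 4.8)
The plan is to treat the global core and the global shadow in parallel: in each case I would first identify the candidate sub- or quotient-module, check it is global and extremal among global sub-/quotient-modules, and then read off the adjunction from the resulting universal property. The one fact I would use throughout is the reformulation of the criterion in \exref{trivialmod}: a partial $H$-module $(M,\pi)$ is global exactly when $\pi$ is multiplicative, i.e.\ $g\cdot(h\cdot m)=(gh)\cdot m$ for all $g,h\in H$ and $m\in M$.

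For the core, the first step is to show that $c(M)$ is a partial submodule, i.e.\ stable under every $\pi(k)$. If $m\in c(M)$ and $k\in H$, then applying the defining relation of $c(M)$ to $m$ three times (with the pairs $(h,k)$, $(g,hk)$ and $(gh,k)$) gives $g\cdot(h\cdot(k\cdot m))=g\cdot((hk)\cdot m)=(ghk)\cdot m=(gh)\cdot(k\cdot m)$, so $k\cdot m\in c(M)$. By construction the relation $g\cdot(h\cdot m)=(gh)\cdot m$ holds on $c(M)$, so $c(M)$ is a global $H$-module; and any global submodule $N\subseteq M$ satisfies this relation on its own elements, whence $N\subseteq c(M)$. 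Thus $c(M)$ is the largest global submodule. The adjunction $i\dashv c$ then follows from a factorisation: for a global module $N$ and a partial morphism $f\colon i(N)\to M$, the intertwining property together with globality of $N$ yields $g\cdot(h\cdot f(n))=f(g\cdot(h\cdot n))=f((gh)\cdot n)=(gh)\cdot f(n)$, so $f(N)\subseteq c(M)$ and $f$ factors uniquely through $c(M)\hookrightarrow M$; this is the natural bijection $\Hom_{{}_H\mathcal{M}^{par}}(i(N),M)\cong\Hom_{{}_H\mathcal{M}}(N,c(M))$.

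For the shadow, I would set $W=\langle h\cdot(g\cdot m)-(gh)\cdot m\rangle$, the partial submodule generated by these elements, so that $s(M)=M/W$ is automatically a partial module. In $M/W$ every class satisfies the multiplicativity relation, and since every element of the quotient is such a class, $s(M)$ is global. Conversely, any partial epimorphism onto a global module must annihilate each generator of $W$, hence factors through $s(M)$, so $s(M)$ is the largest global quotient. The adjunction $s\dashv i$ is the dual factorisation: for a partial morphism $f\colon M\to i(N)$ with $N$ global, the intertwining property sends each generator of $W$ to the corresponding instance of the multiplicativity relation in $N$, which vanishes since $N$ is global; hence $f$ descends uniquely to $s(M)\to N$, giving $\Hom_{{}_H\mathcal{M}^{par}}(M,i(N))\cong\Hom_{{}_H\mathcal{M}}(s(M),N)$.

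What remains are the routine points: naturality of both bijections in each variable, and the verification that the maps displayed are morphisms in the appropriate categories; these are immediate from the constructions. The step I expect to cost the most care is the stability of $c(M)$ under the partial action. Because $\pi$ is genuinely non-multiplicative on all of $M$, one cannot invoke any global identity on $M$ itself; instead the three applications of the defining relation must all be made to the fixed element $m\in c(M)$, and one must first confirm that this elementwise condition really does cut out membership in a global submodule in the partial setting of \exref{trivialmod}.
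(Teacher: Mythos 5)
Your proof is correct, but it takes a genuinely different route from the paper's. You verify everything by hand: the three-fold application of the defining relation of $c(M)$ (with the pairs $(h,k)$, $(g,hk)$, $(gh,k)$) is exactly the right computation to show stability under the partial action, after which multiplicativity of the restricted representation makes $c(M)$ global and evidently the largest global submodule; the two adjunctions then follow from the factorisation of morphisms through $c(M)\hookrightarrow M$ and $M\twoheadrightarrow s(M)$, as you describe. The paper argues instead via change of rings: since $[h]\mapsto h$ is an algebra map $H_{par}\to H$, the inclusion functor $i$ coincides with both $H\ot_H-$ and ${}_H\Hom(H,-)$, hence automatically has ${}_{H_{par}}\Hom(H,-)$ as a right adjoint and $H\ot_{H_{par}}-$ as a left adjoint, and the displayed formulas are recovered as the image of the canonical map $f\mapsto f(1)$ and as the canonical quotient $m\mapsto 1\ot_{H_{par}}m$. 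Your argument is more elementary and self-contained, needing nothing about $H_{par}$ beyond the fact that a partial module is global precisely when the representation is multiplicative (which is indeed equivalent to the criterion of the example you cite, since $\pi(1_H)=\mathrm{id}$ forces a multiplicative $\pi$ to be an algebra map); the paper's argument gets the existence of both adjoints for free and, as a bonus, identifies $c$ and $s$ with the coinduction and induction functors along $H_{par}\to H$, an identification your proof does not produce. The one point you flag as delicate --- that the elementwise condition cuts out an actual global submodule --- is handled correctly by your stability computation.
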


\begin{proof}
First of all, it is easily verified that $c(M)$ and $s(M)$ are indeed the global core and shadow of $M$.

We know (see \cite{ABV}) that the map 
$$H_{par}\to H, [h]\mapsto h$$
is an algebra map. Hence, $H$ is both an $H$-$H_{par}$ bimodule and an $H_{par}$-$H$ bimodule.
Hence, we obtain adjoint pairs
$(H\ot_H -,{_{H_{par}}}\Hom(H,-))$ and $(H\ot_{H_{par}}-, {_H\Hom}(H,-))$ as follows
\[
\xymatrix{
{_H\Mm} \ar@<.5ex>[rr]^-{H\ot_H-} && {_{H_{par}}\Mm} \ar@<.5ex>[ll]^-{{_{H_{par}}\Hom}(H,-)}
}\quad
\xymatrix{
{_H\Mm} \ar@<.5ex>[rr]^-{{_H\Hom}(H,-)} && {_{H_{par}}\Mm} \ar@<.5ex>[ll]^-{H\ot_{H_{par}}-}
}.
\]
One can now easily observe that both the functors $H\ot_H-$ and ${_H\Hom}(H,-)$ are exactly the inclusion functor $i$. 
On the other hand we have a canonical monomorphism 
$$\alpha:{_{H_{par}}\Hom}(H,M)\to M, f\mapsto f(1),$$
and a canonical epimorphism
$$\beta: M\to H\otimes_{H_{par}} M, m\mapsto 1\ot_{H_{par}} m.$$
One easily checks that the image of $\alpha$ is exactly the global core of $M$ and the image of $\beta$ is the global shadow.
\end{proof}

Clearly, a single global $H$-module can be the core or shadow of many partial $H$-modules. 
Hence, it follows from the previous proposition that the inclusion functor and its adjoints do not allow to describe all partial modules. In the next section, we will construct other functors between partial and global modules that will allow such a description.

We finish this section by an explicit description of the partial modules in two illustrative examples.

\begin{exmp} \label{exampledualkC2} \exlabel{dualC2}
 Let $k$ be a field with characteristic not  equal to 2, consider the cyclic group $\Z_2$ and consider the group algebra $k \Z_2$ with base $\{u_0,u_1\}$. Recall that $\phi:k\Z_2\to (k\Z_2)^*=H$ given by $\phi(u_0)=p_0+p_1$ and $\phi(u_1)=p_0-p_1$ is a Hopf algebra isomorphism, where $\{p_0,p_1\}$ is the dual base for $\{u_0,u_1\}$. 
Then it was shown in \cite{ABV} that $H_{par}$ is isomorphic to the $3$-dimensional algebra 
\[
k[x]/\langle x(2x-1)(x-1) \rangle ,
\] 
where $x$ corresponds to the element $[p_0]$ in $H_{par}$.

A partial module over $H$, can therefore be identified with a pair 
$(\mathbb{V}, t)$, where $\mathbb{V}$ is a $k$-vector space with 
a fixed linear transformation $t:\mathbb{V}\rightarrow \mathbb{V}$ satisfying the constraint $t\circ (t-I)\circ (2t-I) =0$. 
Indeed, $\VV$ becomes this way an $H_{par}$-module by the action
\[
(a_0 +a_1 x +a_2 x^2 )\triangleright v =a_0 v + a_1 t(v) + a_2 t^2 (v).
\]
The above constraint on $t$ leads to the conclusion that the only possible eigenvalues of $t$ are $0$, $1$ and $\frac{1}{2}$, and the vector space $\mathbb{V}$ is decomposed into a threefold direct sum
\[
\mathbb{V}=\mathbb{V}_0 \oplus \mathbb{V}_1 \oplus \mathbb{V}_{\frac{1}{2}} .
\]
Hence the partial representation $\pi:H\to \End(\VV)$ may be presented by  
the block matrices (where one numbers rows and columns by $i,j=0,1,1/2$ and the $(i,j)$-block refers to $\Hom(\VV_i,\VV_j)$)
\[
\pi(p_0) =\left( \begin{array}{ccc} 1 & 0 & 0 \\ 0 & 0 & 0 \\ 0 & 0 & \frac{1}{2} \end{array} \right) ,
\quad
\pi(p_1) =\left( \begin{array}{ccc} 0 & 0 & 0 \\ 0 & 1 & 0 \\ 0 & 0 & \frac{1}{2} \end{array} \right).
\]
Since modules over $(k\Z_2)^*$ coincide with $\Z_2$-graded modules, we call a partial $H$-module a ``partially $\Z_2$-graded vector space''. 
Notice that $\VV_0\oplus \VV_1$ is exactly the global core and shadow of $\VV$.
\end{exmp}

\begin{exmp} \label{examplesweedler} \label{exampleinfinitesweedler} \exlabel{exampleSweedler}
Consider the Sweedler Hopf algebra $H_4$, which is a $4$-dimensional algebra with base $\{1,g,x,y\}$, where
\[
\begin{array}{lll}
g^2=1, & x^2=0, & gx=y=-xg,\\
\Delta (g)=g\otimes g, & \Delta (x)=g\otimes x +x\otimes 1, & \Delta(y)=1\ot y+y\ot g,\\
\epsilon (g) =1,& \epsilon (x)=0, & \epsilon(y)=0,\\ 
S(g) =g, & S(x)=-y & S(y)=x.
\end{array}
\] 
Suppose that we work over an algebraically closed base field $k$ of $\mathrm{ch}(k) \neq 2$. 
A (global) representation of $H_4$ on an $n$-dimensional vector space $V$, is then completely determined by two $n\times n$ matrices $[g]$ and $[x]$, satisfying equations as above. In particular, since $g^2=1$, it is always possible to find a base for $V$ such that $g$ takes the form of a block matrix (where the dimensions of the blocks sum up to $n$)
\[
\left(
\begin{array}{cc}
1 & 0\\ 0 & -1
\end{array}
\right) .
\]
And as a consequence of the above equations, $x$ needs to take the form of a block matrix (with blocks of the same size)
\[
\left(
\begin{array}{cc}
0 & a\\ b & 0
\end{array}
\right) , 
\]
where $ab=ba=0$. 

On the other hand, a partial representation on the same $n$-dimensional vector space $V$ would be determined by {\em three} matrices $[g]$, $[x]$, $[y]$, since the identity $[y]=[g][x]$ is not longer guaranteed in the partial case. Considering the axiom (PR2) in the case $h=k=g$ one obtains that the matrix $[g]$ should satisfy
$$[g]^3=[g].$$
Hence, we can always find a base of $V$ such that $[g]$ is a block matrix of the form
\[
\left(
\begin{array}{ccc}
1 & 0&0\\ 0 & -1&0 \\ 0&0&0
\end{array}
\right) .
\]
By comparing with the global case, the partiality of the representation is encoded by the eigenvalue $0$. 
We want to show that moreover the partiality is also only visible in the eigenspace associated to $0$. 

Let us make this more precise. In the next table some of the axioms of partial representations are computed explicitly in terms of the matrices $[x],[y],[g]$
\begin{center}
\begin{tabular}{|r|r|l|l|}
\hline
(PR3) & $h=x$, $k=g$ & $[g][y][g]=-[g][x]$ & (1)\\
(PR2) & $h=g$, $k=y$ & $[g][y][g]=[x][g]$ & (2)\\
(PR4) & $h=g$, $k=y$ & $[g][x][g]=[y][g]$ & (3)\\
(PR5) & $h=x$, $k=g$ & $[g][x][g]=-[g][y]$ & (4)\\
(PR5) & $h=k=x$ & $[g][x]^2=0$ & (5)\\
(PR4) & $h=k=x$ & $[x][g][x]=[x][y]-[y][x]$ & (6)\\
(PR2) & $h=k=x$ & $[x][g][y]=[x]^2-[y]^2$ & (7)\\
\hline
\end{tabular}
\end{center}
Then from the equations (1)-(2)-(3)-(4) it follows easily that both $[x]$ and $[y]$ anti-commute with $[g]$, and hence they are block matrices of the form
$$[x]=
\left(
\begin{array}{ccc}
0 & a&0\\ b & 0&0 \\ 0&0&c
\end{array}
\right) ,  
\qquad
[y]=
\left(
\begin{array}{ccc}
0 & a'&0\\ b' & 0&0 \\ 0&0&d
\end{array}
\right) .
$$
By equation (1), it then follows that 
$$a=a', \quad b=-b'$$
and equation (5) moreover implies that 
$$ab=ba=0.$$
Finally, by (6) and (7) and  we find that
$$cd=dc; \quad c^2=d^2.$$
Hence, any partial $H_4$-module can be decomposed in a direct sum
$$V=U\oplus W,$$
where $U$ is the global core (and shadow) of $V$ consisting of the direct sum of the eigenspaces of $[g]$ with eigenvalues $-1$ and $1$; 
and $W$ is a purely partial $H_4$-module coinciding with the $0$-eigenspace of $[g]$.

Let us give a particular example of a (purely) partial $H_4$-module. Consider an $n$-dimensional $k$-vector space $\mathbb{W}_n$ with a base $\{w_1 , \ldots w_n \}$ and define $\pi_n :H_4 \rightarrow \mbox{End}_k (\mathbb{W}_n )$ as
\[
\begin{array}{lll}
\pi_n (1) = \mbox{Id}_{\mathbb{V}_n}, &  
\pi_n (g) (w_k ) =0, \mbox{  for  } 1\leq k \leq n \\ 
\pi_n (x) (w_k ) =\left\{ \begin{array}{cc} w_{k+1} & \mbox{ for } k<n \\ 0 & \mbox{ for } k=n \end{array}\right.  
& \pi_n (gx) (w_k ) =\left\{ \begin{array}{cc} w_{k+1} & \mbox{ for } k<n \\ 0 & \mbox{ for } k=n \end{array}\right. .
\end{array}
\]
This means in the notation introduced above that 
$$c=c'=\left( \begin{array}{ccccc} 
0 & 0 & 0& \cdots & 0\\
1 & 0& 0&  \cdots & 0\\
0&1& 0& \ddots \\
&& \ddots &\ddots \\
0& \cdots && 1 & 0\end{array} \right) .$$
Furthermore, the submodules of $\WW_n$ are exactly of the form $\WW_m$ with $m<n$. Indeed, if $w=\sum_{i=1}^n a_iw_i$ is in a submodule $\WW$ of $\WW_n$ such that $a_1=\ldots=a_{k-1}=0$ and $a_{k}\neq 0$, then we find for any $\ell<n-k$ that $x^\ell w=\sum_{i=k+\ell}^na_iw_i\in \WW$ and hence $\WW$ contains $\WW_{n=k+1}$. Therefore $\WW_n$ is a tower 
$$\WW_1\subset \WW_2\subset \cdots \subset \WW_{n-1}\subset \WW_n,$$
and $\WW_n$ does not contain other submodules than these. 
It now follows that for $n>1$, $\WW_n$ is not semi-simple, hence $(H_4)_{par}$ is not semi-simple.
\end{exmp}

Clearly, if the universal Hopf algebroid $H_{par}$ is finite dimensional and semi-simple, then also $H$ is finite dimensional (since $H$ is a direct summand of $H_{par}$) and semi-simple (since any global module is a partial module and all submodules of a global module are again global). 

Conversely, note that even though $H_4$ is finite dimensional, its universal partial ``Hopf'' algebra is  infinite dimensional. Intuitively, the ``potential'' of a Hopf 
algebra to produce partial representations is measured by the size of the subalgebra  $A_{par}\subseteq H_{par}$.
In fact, it is expected that partial representations depend not only on the algebra $H$, but also on its irreducible representations. This is supported by the explicit description of the partial group algebra in \cite{DEP}.

Therefore we make the following conjecture, which we hope to prove (or disprove) in a following paper.

\begin{conjecture}
If $H$ is a finite dimensional semi-simple Hopf algebra, then $H_{par}$ is also semi-simple and finite dimensional.
\end{conjecture}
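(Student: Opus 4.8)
The plan is to establish the two assertions---finite-dimensionality and semisimplicity of $H_{par}$---separately, reducing both to an analysis of the base algebra $A$ through the isomorphism $H_{par}\cong \ul{A\sm H}$ of the structure theorem of \cite{ABV} recalled above. Since $H$ is finite-dimensional and the partial smash product is a quotient of $A\ot H$, the algebra $H_{par}$ is finite-dimensional precisely when $A$ is; and once finite-dimensionality is known, semisimplicity of $\ul{A\sm H}$ should follow from a Maschke-type averaging argument using a normalized integral of the semisimple Hopf algebra $H$. The guiding model throughout is the group case: for a finite group $G$ with $kG$ semisimple, the partial group algebra $(kG)_{par}$ is finite-dimensional and semisimple, and in \cite{DEP} it is described explicitly as the algebra of a finite groupoid built from the subsets of $G$ containing the unit. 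The heart of the proof would be to produce a Hopf-algebraic analogue of this groupoid decomposition.

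I would begin with finite-dimensionality of $A$. The map $h\mapsto \ve_h=[h_{(1)}][S(h_{(2)})]$ is linear, so $A$ is generated as an algebra by a finite-dimensional subspace; the Sweedler example shows this alone is far from sufficient, so semisimplicity of $H$ must be used essentially. The tool I would invoke is the Larson--Radford theorem: a finite-dimensional semisimple Hopf algebra (over a field of characteristic zero, or more generally when $H$ is also cosemisimple) has involutive antipode, $S^2=\mathrm{id}$, and is cosemisimple. Cosemisimplicity lets me decompose $H=\bigoplus_C C$ into simple subcoalgebras with comatrix bases $\{c_{ij}\}$, $\Delta(c_{ij})=\sum_l c_{il}\ot c_{lj}$, $\epsilon(c_{ij})=\delta_{ij}$, so that $\ve_{c_{ij}}=\sum_l [c_{il}][S(c_{lj})]$. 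Specialising the defining relations (PR2)--(PR5) to these comatrix units, together with the partial-action formula $h\cdot \ve_k=\ve_{h_{(1)}k}\ve_{h_{(2)}}$, I would try to show that the products $\ve_{h^1}\cdots \ve_{h^m}$ close up into a finite-dimensional commutative algebra. Concretely, I expect to extract from the normalized integral a finite family of commuting idempotents indexed by the corepresentations of $H$, and to prove that $A$ is the finite-dimensional algebra they generate---mirroring the idempotents $\ve_g=[g][g^{-1}]$ and the semilattice of subsets appearing in the group case.

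For semisimplicity, assuming $A$ is finite-dimensional, I would first argue that $A$ is itself semisimple: being commutative, it suffices to show it is reduced/separable, which should follow from the idempotent description above together with $S^2=\mathrm{id}$. I would then prove a partial Maschke theorem: using a two-sided integral $t\in H$ with $\epsilon(t)=1$, construct an averaging projection that splits every epimorphism of $H_{par}$-modules, taking care that the operator built from $t$ interacts correctly with the partial idempotents $\ve_h\in A$ (this is where symmetry of the partial action is needed). Equivalently, one can try to exhibit a separability idempotent for $\ul{A\sm H}$ over $k$ out of the separability idempotent of $A$ and the integral of $H$. Either route yields that ${}_{H_{par}}\Mm\cong {}_H\Mm^{par}$ is a semisimple category.

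The main obstacle is unquestionably the finite-dimensionality of $A$. In the non-semisimple Sweedler case $A$ is genuinely infinite-dimensional---the tower $\WW_1\subset\WW_2\subset\cdots$ exhibits arbitrarily long reduced words---so any correct argument must use semisimplicity (really cosemisimplicity and involutivity of $S$) in an irremovable way to force the relations to truncate. Identifying the right finite generating set of idempotents and proving closure of their products is precisely the quantum analogue of describing the groupoid of \cite{DEP}, and I expect this combinatorial--coalgebraic bookkeeping, rather than the subsequent Maschke argument, to be the crux of the proof.
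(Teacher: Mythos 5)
The statement you were asked to prove is not proved in the paper at all: it is stated there as a conjecture, which the authors explicitly say they ``hope to prove (or disprove) in a following paper.'' There is therefore no proof in the paper to compare yours against, and your text must stand on its own. Judged that way, it is a research programme rather than a proof, and the gap sits exactly where you place it yourself: the finite-dimensionality of the base algebra $A$. You set up the comatrix-basis decomposition $H=\bigoplus_C C$ and propose to show that the products $\varepsilon_{h^1}\cdots\varepsilon_{h^m}$ close up into a finite-dimensional algebra, but you give no mechanism by which cosemisimplicity and $S^2=\mathrm{id}$ force the relations (PR2)--(PR5) to truncate arbitrarily long words; you only say you ``expect'' this. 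Since the Sweedler example in the paper shows that without semisimplicity the words genuinely do not truncate, this is the entire content of the conjecture, and naming it as the crux is not the same as resolving it. The second half (Maschke via a normalized integral, or a separability idempotent for $\ul{A\sm H}$) is plausible but is also only sketched; in particular the compatibility of the averaging operator with the partial idempotents $\varepsilon_h$, which you flag as the delicate point, is not verified.

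Two smaller issues. First, the Larson--Radford theorem in the form you invoke it ($S^2=\mathrm{id}$ and cosemisimplicity from semisimplicity) requires characteristic zero or an additional cosemisimplicity hypothesis, whereas the conjecture is stated over an arbitrary field; if your route genuinely needs these inputs you would at best be proving a restricted version. Second, the commutativity of $A$, which you use to reduce its semisimplicity to reducedness, is true but is a theorem of \cite{ABV} and should be cited rather than taken for granted. In summary: the strategy is reasonable and consistent with the group-case model of \cite{DEP}, but as written it contains no proof of the one assertion that makes the conjecture nontrivial.
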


\section{Projections and partial modules}

It is known that partial actions can be constructed by restricting global actions to suitable subalgebras. As a first result, we will now show that partial modules of a Hopf algebra $H$ can be constructed from ordinary $H$-modules by means of projections.

\begin{defi} Let $H$ be a Hopf algebra, $M$ a left $H$-module with the module structure given by the 
map $\triangleright :H\otimes M \rightarrow M$ and $T:M \rightarrow M$ a $k$-linear projection (i.e.\ $T^2=T$). We say that $T$ satisfies the {\em
$c$-condition} if 
for all $h\in H$ we have
$$T_h\circ T=T\circ T_h ,$$
in which $T_h =h_{(1)}\triangleright T(S(h_{(2)})\triangleright -)$ for $h\in H$.

Similarly, we say $T$ satisfies the $\tilde c$-condition if 
for all $h\in H$ we have
$$\tilde T_h\circ T=T\circ \tilde T_h , $$
in which $\tilde{T}_h =S(h_{(1)}) \triangleright T(h_{(2)}\triangleright -)$ for $h\in H$.
\end{defi}

Remark that $T_h$ is nothing else than $h\blacktriangleright T$, where $\blacktriangleright$ denotes the left adjoint action of $H$ on $\Hom_k(H,M)$.

\begin{lemma}
For a left $H$-module and a projection $T\in\End_k(M)$, the following statements are equivalent:
\begin{enumerate}[(i)]
\item $T$ satisfies the $c$-condition;
\item $T$ satisfies the $\tilde c$-condition;
\item $T_h\circ \tilde T_k = \tilde T_k\circ T_h$ for all $h,k\in H$.
\end{enumerate}
\end{lemma}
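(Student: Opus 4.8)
The plan is to regard $T_h$ and $\tilde{T}_h$ as the values at $T$ of the left and right adjoint actions of $H$ on the module algebra $\End_k(M)$. Writing $\lambda_h\in\End_k(M)$ for the operator $m\mapsto h\triangleright m$, we have $T_h=\sum\lambda_{h_{(1)}}\circ T\circ\lambda_{S(h_{(2)})}=h\acts T$ (the left adjoint action, as in the remark above) and $\tilde{T}_h=\sum\lambda_{S(h_{(1)})}\circ T\circ\lambda_{h_{(2)}}$ (the corresponding right adjoint translate). It is convenient to record the defects
\[
C(h):=T_h\circ T-T\circ T_h,\qquad \tilde{C}(h):=\tilde{T}_h\circ T-T\circ\tilde{T}_h,
\]
so that the $c$-condition amounts to $C\equiv 0$ and the $\tilde c$-condition to $\tilde{C}\equiv 0$ (both asserting that $T$ commutes with every adjoint translate of itself), while (iii) is the mutual commutation $[T_h,\tilde{T}_k]=0$. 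Two elementary facts will be used repeatedly: $T_{1_H}=\tilde{T}_{1_H}=T$, and the antipode identities $\sum h_{(1)}S(h_{(2)})=\epsilon(h)1_H=\sum S(h_{(1)})h_{(2)}$, which cause flanking pairs $\lambda_{S(h_{(i)})}\lambda_{h_{(i+1)}}$ to telescope to $\epsilon(h_{(i)})\mathrm{Id}_M$; the idempotency $T^2=T$ is the other workhorse.

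First I would prove $(i)\Leftrightarrow(ii)$ by establishing the two dual ``sandwich'' identities
\[
\tilde{C}(h)=-\sum\lambda_{S(h_{(1)})}\circ C(h_{(2)})\circ\lambda_{h_{(3)}},\qquad
C(h)=-\sum\lambda_{h_{(1)}}\circ\tilde{C}(h_{(2)})\circ\lambda_{S(h_{(3)})},
\]
where in each case the middle leg carries its own coproduct. Both are verified by a direct Sweedler computation: expand the inner defect, absorb the outer $\lambda$'s, and collapse the two flanking translation pairs by the antipode identities; what survives, after a single application of $T^2=T$, is exactly $\pm(T\circ\tilde{T}_h-\tilde{T}_h\circ T)$, respectively $\pm(T\circ T_h-T_h\circ T)$. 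As the right-hand sides are sums of values of $C$ (resp.\ $\tilde{C}$), the vanishing of one defect forces the vanishing of the other, which is $(i)\Leftrightarrow(ii)$. The implications $(iii)\Rightarrow(i)$ and $(iii)\Rightarrow(ii)$ then come for free: since $T_{1_H}=\tilde{T}_{1_H}=T$, setting $k=1_H$ in (iii) gives $C\equiv 0$ and setting $h=1_H$ gives $\tilde{C}\equiv 0$.

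It remains to prove $(i)\Rightarrow(iii)$, which is the heart of the matter. Here I would argue by covariance, applying the adjoint action to the commutation relation itself: since $g\mapsto g\acts(-)$ is an algebra map on $\End_k(M)$ and $g_{(1)}\acts T_h=T_{g_{(1)}h}$, applying $g\acts(-)$ to $T_h\circ T=T\circ T_h$ yields the family $\sum T_{g_{(1)}h}\circ T_{g_{(2)}}=\sum T_{g_{(1)}}\circ T_{g_{(2)}h}$ for all $g,h$, and the right adjoint action applied to $\tilde{C}\equiv 0$ yields the mirror family for the $\tilde{T}$'s. The goal is to reorganise these, via the antipode identities and $T^2=T$, into $T_h\circ\tilde{T}_k=\tilde{T}_k\circ T_h$. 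The main obstacle is precisely this reorganisation: in the cocommutative case (for instance $H=kG$, where $\tilde{T}_g=T_{g^{-1}}$, so that $\{T_h\}$ and $\{\tilde{T}_h\}$ are one and the same family and a single conjugation step turns ``$T$ commutes with all its conjugates'' into ``all conjugates pairwise commute''), the argument closes in one line; for general $H$ the antipode twists the product as $S(k_{(1)}h_{(2)})$ and thereby entangles the $h$- and $k$-legs, so the left and right adjoint actions no longer commute and one must follow the Sweedler bookkeeping carefully to check that the entangled terms recombine into $\tilde{T}_k\circ T_h$.
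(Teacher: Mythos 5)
Your reductions are sound as far as they go. The sandwich identities $\tilde C(h)=-\sum\lambda_{S(h_{(1)})}\circ C(h_{(2)})\circ\lambda_{h_{(3)}}$ and their dual are correct (they follow from the telescoping $\lambda_{S(h_{(1)})}\lambda_{h_{(2)}}=\epsilon(h_{(1)})\mathrm{Id}_M$ alone; idempotency of $T$ is not actually needed there), so your $(i)\Leftrightarrow(ii)$ stands, and $(iii)\Rightarrow(i),(ii)$ by specialising $k$ or $h$ to $1_H$ is fine. One caveat: you have silently replaced the paper's literal $\tilde c$-condition $T_h\circ T=T\circ\tilde T_h$ by $\tilde T_h\circ T=T\circ\tilde T_h$; this is evidently what the authors intend (their own step $(iii)\Rightarrow(ii)$ is exactly ``set $h=1_H$''), but it should be said explicitly.

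The genuine gap is exactly where you locate it: $(i)\Rightarrow(iii)$ is announced but not proved, and your proposed covariance strategy does not obviously close it. Applying the left adjoint action $g\blacktriangleright(-)$ to $T_h\circ T=T\circ T_h$ only yields identities among the operators $T_x$, $x\in H$; it never produces the mixed commutator $[T_h,\tilde T_k]$, because $\tilde T_k$ is not of the form $x\blacktriangleright T$ for general $H$. The paper closes this step not by covariance but by a direct conjugation: inserting $h_{(1)}=h_{(1)}S(k_{(1)}h_{(2)})k_{(2)}h_{(3)}$ into $T_h\circ\tilde T_k$ and using $S(h)S(k)=S(kh)$, one obtains
\[
T_h\circ\tilde T_k=\sum S(k_{(1)})\triangleright\Big(\big(T_{k_{(2)}h}\circ T\big)(k_{(3)}\triangleright-)\Big),\qquad
\tilde T_k\circ T_h=\sum S(k_{(1)})\triangleright\Big(\big(T\circ T_{k_{(2)}h}\big)(k_{(3)}\triangleright-)\Big),
\]
so that (iii) is literally the $c$-condition applied at the elements $k_{(2)}h$, conjugated by $\lambda_{S(k_{(1)})}(-)\lambda_{k_{(3)}}$. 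This pair of identities is precisely the ``reorganisation'' you flag as an obstacle; without it (or an equivalent), the equivalence with (iii) is not established and the proof is incomplete. For the record, the paper then deduces $(ii)\Rightarrow(iii)\Rightarrow(i)$ by rerunning the same computation over $H^{cop}$, which interchanges the families $T_h$ and $\tilde T_h$ (using invertibility of $S$) --- an alternative you could also have used in place of your sandwich identities once $(i)\Rightarrow(iii)$ is actually in hand.
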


\begin{proof}
\ul{$(i)\Rightarrow(iii)$}. Consider $m\in M$, then
\begin{eqnarray*}
T_h \circ \tilde{T}_k (m) & = & h_{(1)} \triangleright T(S(h_{(2)})S(k_{(1)})\triangleright T(k_{(2)} \triangleright m)) \\
& = & h_{(1)}S(k_{(1)}h_{(2)}) k_{(2)}h_{(3)} \triangleright T(S(k_{(3)}h_{(4)})\triangleright T (k_{(4)}\triangleright m)) \\
& = & h_{(1)}S(k_{(1)}h_{(2)}) \triangleright (k_{(2)}h_{(3)} \triangleright T(S(k_{(3)}h_{(4)})\triangleright T (k_{(4)}\triangleright m))) \\
& = & S(k_{(1)}) \triangleright (T_{k_{(2)}h} \circ T (k_{(3)}\triangleright m))\\
& = & S(k_{(1)}) \triangleright (T \circ T_{k_{(2)}h}(k_{(3)}\triangleright m))\\
& = & S(k_{(1)}) \triangleright T( k_{(2)}h_{(1)} \triangleright T(S(k_{(3)}h_{(2)}) k_{(4)})\triangleright m))) \\
& = & S(k_{(1)}) \triangleright T( k_{(2)}\triangleright (h_{(1)}\triangleright T(S(h_{(2)})\triangleright m)))\\
& = & \tilde{T}_k \circ T_h (m).
\end{eqnarray*}
\ul{$(iii)\Rightarrow (ii)$}. Follows immediately by considering $T=T_1$.\\
\ul{$(ii)\Rightarrow (iii)\Rightarrow (i)$}. Follows by applying the previous implications on the Hopf algebra $H^{cop}$ which has the same underlying algebra as $H$, but the opposite comultiplication (i.e.\ $\Delta_{H^{cop}}(h)=h_{(2)}\ot h_{(1)}$). Observe that the antipode of $H^{cop}$ is given by the inverse of the antipode of $H$ and that the transition from $H$ into $H^{cop}$ reverses the roles of $T_h$ and $\tilde T_h$.
\end{proof}

\begin{prop} \label{projections}\prlabel{projections} Let $H$ be a Hopf algebra over $k$, $M$ a left $H$-module and $T\in \End_k (M)$  a $k$-linear projection satisfying the $c$-condition.
Then the linear map $\pi :H\rightarrow \End_k (T(M))$ given by 
$\pi (h) (m)=T(h\triangleright m)$, in which $m=T(m')$, for some $m'\in M$, defines a partial representation of the Hopf algebra $H$, i.e. it endows $T(M)$ with the structure of a partial $H$-module.
\end{prop}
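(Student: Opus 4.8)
The plan is to verify the five axioms (PR1)--(PR5) for the map $\pi(h)(m)=T(h\triangleright m)$ on $T(M)$, using throughout that $T(m)=m$ for $m\in T(M)$ and that $\pi(h)(m)=T(h\triangleright m)$ automatically lands in $T(M)$ (so $\pi(h)\in\End_k(T(M))$ with no extra work). Axiom (PR1) is immediate, since $\pi(1_H)(m)=T(1_H\triangleright m)=T(m)=m$. The computational backbone will be the identity that, for $m\in T(M)$,
\[\pi(h_{(1)})\pi(S(h_{(2)}))(m)=T(T_h(m))=T_h(m)\in T(M),\]
where the first equality is the definition of $T_h$ and the second is the $c$-condition $T\circ T_h=T_h\circ T$ combined with $T(m)=m$; this also shows $T_h$ preserves $T(M)$. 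Dually, using the $\tilde c$-condition (which holds by the preceding Lemma) one finds $\pi(S(h_{(1)}))\pi(h_{(2)})(m)=T(\tilde T_h(m))=T_h(m)$. Thus on $T(M)$ both $\varepsilon$-type composites collapse to the single operator $T_h$, which is the key simplification.

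Next I would dispatch (PR2) and (PR3) directly. For (PR3) I write the left-hand side as $\pi(h_{(1)})\pi(S(h_{(2)}))$ applied to $z:=\pi(k)(m)=T(k\triangleright m)\in T(M)$, which by the identity above equals $T_h(T(k\triangleright m))$; the right-hand side expands to $\sum T\big(h_{(1)}\triangleright T(S(h_{(2)})k\triangleright m)\big)=T(T_h(k\triangleright m))$, and the two agree by the $c$-condition applied to $x=k\triangleright m$. For (PR2) I collapse $\pi(k_{(1)})\pi(S(k_{(2)}))$ to $T_k$ on $T(M)$, apply $\pi(h)$, and use associativity of the $H$-action in the form $h\triangleright(k_{(1)}\triangleright-)=(hk_{(1)})\triangleright-$ to recognize the result as $\sum T\big(hk_{(1)}\triangleright T(S(k_{(2)})\triangleright m)\big)$, which is exactly the right-hand side $\pi(hk_{(1)})\pi(S(k_{(2)}))(m)$. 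Both of these use only the $c$-condition together with associativity.

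The main obstacle is (PR4) (and its mirror (PR5)). A direct attempt stalls: after collapsing $\pi(S(k_{(1)}))\pi(k_{(2)})$ the left-hand side becomes $T(h\triangleright T(\tilde T_k(m)))$ while the right-hand side is $T(h\triangleright\tilde T_k(m))$, so one would need $T(h\triangleright Tv)=T(h\triangleright v)$ for $v=\tilde T_k(m)$, which is not available from the $c$-condition alone. I would instead obtain these two axioms by duality. View $M$ as a module over $H^{cop}$ (the underlying action is unchanged), so the associated map is literally the same $\pi$. The $c$-condition for $H^{cop}$ unwinds to the commutation $T\circ\tilde T_k=\tilde T_k\circ T$ for all $k$ (passing to $H^{cop}$ interchanges $T_h$ and $\tilde T_h$, precisely $T^{H^{cop}}_{S(h)}=\tilde T_h$), and this is exactly condition $(iii)$ of the Lemma evaluated at $h=1_H$, since $T_{1_H}=T$. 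Hence $T$ satisfies the $c$-condition for $H^{cop}$, so the already-established (PR2) and (PR3) hold verbatim for $H^{cop}$.

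It then remains to translate these statements back. Writing out (PR2) for $H^{cop}$, where $\Delta^{cop}(k)=k_{(2)}\ot k_{(1)}$ and $S^{cop}=S^{-1}$, gives $\sum\pi(h)\pi(k_{(2)})\pi(S^{-1}(k_{(1)}))=\sum\pi(hk_{(2)})\pi(S^{-1}(k_{(1)}))$; substituting $k\mapsto S(k)$ and simplifying $S^{-1}(S(-))=\mathrm{id}$ turns this into $\sum\pi(h)\pi(S(k_{(1)}))\pi(k_{(2)})=\sum\pi(hS(k_{(1)}))\pi(k_{(2)})$, which is (PR4). The identical manipulation applied to (PR3) for $H^{cop}$ yields (PR5). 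This completes the verification that $\pi$ is a partial representation, the only genuinely delicate point being the descent of the $c$-condition to $H^{cop}$, for which the equivalence with condition $(iii)$ of the Lemma is essential.
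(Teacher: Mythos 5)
Your proof is correct and follows essentially the same route as the paper: the paper verifies (PR1) and (PR2) exactly as you do (collapsing $\pi(k_{(1)})\pi(S(k_{(2)}))$ to $T_k$ via the $c$-condition and then using associativity of the global action) and dismisses the remaining axioms as ``proved in similar way'', while you supply those details, handling (PR4)--(PR5) by the same $H^{cop}$ duality the paper itself uses to prove the preceding Lemma. One small remark: the direct attack on (PR4) does not actually stall, since condition $(iii)$ of that Lemma evaluated at $h=1_H$ gives $T\circ\tilde T_k=\tilde T_k\circ T$, hence $\tilde T_k(m)\in T(M)$ for $m\in T(M)$, and the identity $T(h\triangleright Tv)=T(h\triangleright v)$ you need is then trivial for $v=\tilde T_k(m)$.
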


\begin{proof} In order to verify whether $\pi$ is a partial representation, one needs to check axioms (PR1) up to (PR5). Take $m\in T(M)$, then $T(m)=m$ and (PR1) reads
\[
\pi (1_H) (m) = T(1_H \triangleright m)=T(m)=m .
\]
For (PR2), we find
\begin{eqnarray*}
\pi (h) \pi (k_{(1)}) \pi (S(k_{(2)})) (m)  & = & T(h\triangleright T(k_{(1)} \triangleright
T( S(k_{(2)}) \triangleright m))) \\
& = & T(h \triangleright T\circ T_{k} (m)) =  T(h\triangleright T_{k} \circ T(m)) \\
& = & T(h\triangleright T_{k} (m)) \\
& = & T(hk_{(1)} \triangleright T(S(k_{(2)})\triangleright m))\\
& = & \pi (hk_{(1)}) \pi (S(k_{(2)})) (m) .
\end{eqnarray*}
The other items are proved in a similar way. Therefore, $\pi :H\rightarrow \End_k (T(M))$ is a partial representation, which makes $(T(M), \pi )$ into 
a partial $H$-module
\end{proof}

\begin{exmp}\exlabel{trivial}
Let $M$ be an $H$-module. Then clearly $id_M$ satisfies the $c$-condition and hence we recover the trivial observation that any global $H$-module is also a partial $H$-module. 

Furthermore, if some $T\in\End_k(M)$ satisfies the $c$-condition, then also $T'=id_M-T$ satisfies the $c$-condition. Indeed, for any $h\in H$ we have
\[
T'_h (m)=h_{(1)}\triangleright T'(S(h_{(2)})\triangleright m)=h_{(1)}S(h_{(2)})\triangleright m -T_h (m)=\epsilon (h)m-T_h (m),
\]
which clearly satisfies again the $c$-condition. Therefore, each projection on an $H$-module $M$ satisfying the $c$-condition splits $M$ into two direct summands, each one being a partial $H$-module.
\end{exmp}

\begin{exmp} Let $H=kG$, the group algebra of the group $G$. A $kG$-module is given by a classical representation $u: G\rightarrow GL(\mathbb{V})$. 
Given a projection $T:\mathbb{V}\rightarrow \mathbb{V}$ which commutes with 
$T_g =u(g)Tu(g^{-1})$, for all $g\in G$, the mapping $v: G \rightarrow \End_k (\mathbb{V})$, defined as $v(g)=Tu(g)|_{T(\mathbb{V})}$ is a 
partial representation of the group $G$, as proved in Proposition 1.2 of \cite{Abadie2}.
\end{exmp}

\begin{exmp} Let $B$ be an $H$-module algebra and $e\in B$ be a central idempotent. Consider the $k$-linear projection $T:B\to B,\ T(a)=ea=ae$ for all $a\in B$. Then we find that
\[
T_h (a)= h_{(1)} \triangleright (e(S(h_{(2)})\triangleright a)) .
\]
For $h\in H$ and $a\in B$ we have
\begin{eqnarray*}
T_h (ea) & = & h_{(1)} \triangleright (e(S(h_{(2)})\triangleright ea)) =(h_{(1)}\triangleright e)(h_{(2)}S(h_{(3)})\triangleright (ea)) \\
& = & (h\triangleright e)ea =e(h\triangleright e)a \\
& = & e(h_{(1)}\triangleright e)(h_{(2)}S(h_{(3)})\triangleright a) =e(h_{(1)} \triangleright (e(S(h_{(2)})\triangleright a)))\\
& = & e(T_h (a)) .
\end{eqnarray*}
Therefore, $T$ satisfies the $c$-condition and by \prref{projections}, we obtain that $A=eB$  is a partial $H$-module by
$$\pi (h) (a)=e(h\triangleright a)$$ 
for every $a\in A$. In fact, $A$ is also an algebra (with unit $1_A=e$) and one can observe that with these structures, $A$ is a partial $H$-module algebra, which was proven in \cite{AB}.  
\end{exmp}

\begin{exmp} \label{Z2grading}\exlabel{dualC2_2} Consider the Hopf algebra $H=(k\mathbb{Z}_2)\cong (k\mathbb{Z}_2)^*$. An $H$-module $\VV$ is just a $\mathbb{Z}_2$-graded $k$-vector space. Let us write
\[
\mathbb{V} =\mathbb{V}_0 \oplus \mathbb{V}_1 = k^{n_1 +t} \oplus k^{n_2 +t} .
\]
Using the notation from \exref{dualC2}, the action of $(k\mathbb{Z}_2)^*$ on $\VV$ is given by assigning $p_0$ and $p_1$ to the projections over the subspaces $\mathbb{V}_0$ and $\mathbb{V}_1$, respectively. 
For the subspace $\mathbb{V}_0$, consider the basis $\{ u_1 ,\ldots , u_{n_1} \} \cup \{ e_1 ,\ldots e_t \}$, and for the space $\mathbb{V}_1$, the 
basis $\{ v_1 ,\ldots , v_{n_2} \} \cup \{ f_1 ,\ldots f_t \}$. Take the $k$-linear transformation $T:\mathbb{V}\rightarrow \mathbb{V}$ given by
\[
T(u_i) =u_i ; \quad T(v_j )=v_j; \quad T(e_k )=T(f_k )=\frac{1}{2}(e_k +f_k ) ,
\]
for $1\leq i\leq n_1$, $1\leq j \leq n_2$ and $1\leq k \leq t$. It is easy to see that $T$ is a projection. Now, consider the $k$-linear operators 
\[
T_0 =p_0 \circ T \circ p_0 +p_1 \circ T \circ p_1 ,
\]
and
\[
T_1 =p_0 \circ T \circ p_1 + p_1 \circ T \circ p_0 ,
\]
which, in the basis of $\mathbb{V}$ read
\[
\begin{array}{cccc} T_0 (u_i )=u_i; & T_0 (v_j ) =v_j ; & T_0 (e_k) =\frac{1}{2} e_k ;& T_0 (f_k )= \frac{1}{2} f_k ,\\
T_1 (u_i )=0; & T_1 (v_j ) =0 ; & T_1 (e_k) =\frac{1}{2} f_k ;& T_1 (f_k )= \frac{1}{2} e_k .
\end{array}
\]
A straighforward calculation shows us that $T$ commutes with $T_0$ and $T_1$. 
By Proposition \ref{projections}, we therefore obtain that if $\mathbb{W}=T(V)$ then $\pi :(k\mathbb{Z}_2)^* \rightarrow \End_k (\mathbb{W})$ is a partial representation of the Hopf algebra $(k\mathbb{Z}_2)^*$ which is given by
$\pi (p_0 )=T\circ p_0$ and $\pi (p_1 )=T\circ p_1$. More precisely, writing 
the basis of $\mathbb{W}$ as $\{u_i \}_{1\leq i\leq n_1} \cup 
\{ v_j \}_{1\leq j \leq n_2}\cup \{ g_k= e_k + f_k \}_{1\leq k\leq t}$, we have
\[
\begin{array}{ccc} \pi (p_0 ) (u_i )=u_i; & \pi (p_0 ) (v_j ) =0 ; & \pi (p_0 ) (g_k) =\frac{1}{2} g_k ,\\
\pi (p_1 )(u_i )=0; & \pi (p_1) (v_j ) =v_j ; & \pi (p_1 ) (g_k) =\frac{1}{2} g_k .
\end{array}
\]
One can easily observe that $\WW$ with the linear operator $t=\pi(p_0)$ satisfies the conditions of \exref{dualC2}.
\end{exmp}

\begin{exmp} \label{sweedler} \exlabel{Sweedler2} Consider the four dimensional Sweedler Hopf algebra 
$H_4$. As we already observed in \exref{exampleSweedler}, a finite dimensional $H_4$-module $M$ is determined by matrices $[g],[x]\in \End_k(M)$. 
Suppose that $\dim M=2n$ and that the eigenspaces for $[g]$ with eigenvalue $1$ and $-1$ are both of dimension $n$. Then we can choose a base of $M$ such that $[g]$ has the form
\[
[g]=\left( \begin{array}{cc} 0 & I_n \\ I_n & 0 \end{array} \right) .
\]
Since $[g][x]=-[x][g]$ we then find that $[x]$ can be written as a block matrix
\[ 
[x]=\left( \begin{array}{cc} c & -d \\ 
d & -c \end{array} \right) ,
\]
and since $[x]^2=0$ we have $cd=dc$ and $c^2=d^2$. From the characterizations of partial $H_4$-modules in \exref{exampleSweedler}, it follows that an $n$-dimensional space can be endowed with the structure of a partial $H_4$-module by means of  
$$[g]=0, \quad [x]=c, \quad [gx]=d.$$
We aim to show that this partial module arises exactly as the restriction of the global $H_4$-module $M$ by means of the projection
$$T=\left( \begin{array}{cc} I_n & 0 \\ 0 & 0 \end{array} \right),$$
i.e. the projection on the subspace $W$ of $M$ generated by the first $n$ base vectors.
Indeed, consider the operators
\begin{eqnarray*}
T_g  & = & gTg =\left( \begin{array}{cc} 0 & 0 \\ 0 & I_n \end{array} \right) , \\
T_x  & = & xT -gTgx =\left( \begin{array}{cc} c & 0 \\ 0 & c \end{array} \right) , \\
T_{gx}  & = & gxTg +Tx =\left( \begin{array}{cc} c & 0 \\ 0 & c \end{array} \right) . 
\end{eqnarray*}
Obviously $T$ commutes with $T_g$, $T_x$ and $T_{gx}$, i.e.\ $T$ satisfies the $c$-condition. Therefore, by Proposition \ref{projections} we have that 
$\pi :H_4 \rightarrow \End_k (W)$ given by
\begin{eqnarray*}
\pi (g) (v) & = & Tg \left( \begin{array}{c} v \\ 0 \end{array} \right) =0 ,\\
\pi (x) (v) & = & Tx \left( \begin{array}{c} v \\ 0 \end{array} \right) 
= cv , \\
\pi (gx) (v) & = & Tgx \left( \begin{array}{c} v \\ 0 \end{array} \right) 
= dv ,
\end{eqnarray*}
defines a $n$-dimensional partial representation of the Sweedler's Hopf algebra $H_4$, which is exactly a generic purely partial $H_4$-module as described in \exref{exampleSweedler}.
\end{exmp} 

\begin{defi} Let $H$ be a Hopf $k$-algebra, the category of $H$-modules with projections is the category $\mathbb{T}({}_H \mathcal{M})$ whose objects 
consist of pairs $(M,T)$, in which $M$ is a left $H$-module and $T$ is a $k$-linear projection in $M$ satisfying the $c$-condition. A morphism $f: (M,T)\rightarrow (N,S)$ in this 
category, is a $k$-linear map $f:TM\rightarrow SN$ satisfying 
$$f(T(h\triangleright m)) = S(h\triangleright f(m))$$
for all $m\in TM$. 
\end{defi}

Let $M$ be a left $H$-module and $N\subset M$ any $k$-linear subspace. Then the $H$-submodule generated by $N$ is the set 
$$H\triangleright N=\{\sum_i h_i\triangleright n_i~|~h_i\in H, n_i\in N\},$$
which is the smallest $H$-submodule of $M$ containing $N$.

\begin{lemma}\lelabel{minimal}
Let $(M,T)\in \TT(_H\Mm)$ be an $H$-module with projection. Then $(M,T)\cong (H\triangleright TM,T)$ in $\TT(_H\Mm)$. Consequently, if $M'\subset M$ is an $H$-submodule such that $TM'=0$, then $(M,T)\cong (M/M',T)$ in $\TT(_H\Mm)$.  
\end{lemma}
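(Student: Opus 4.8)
The plan is to exploit the special definition of morphisms in $\TT(_H\Mm)$: a morphism $(M,T)\to(N,S)$ is a $k$-linear map between the \emph{cores} $TM$ and $SN$ (not between $M$ and $N$) that intertwines the maps $m\mapsto T(h\triangleright m)$ and $m\mapsto S(h\triangleright m)$. By \prref{projections} these induced maps are exactly the partial representations carried by the cores, so an isomorphism in $\TT(_H\Mm)$ amounts to nothing more than a $k$-linear bijection between the cores intertwining the two partial representations. Consequently, for both assertions it suffices to check that the objects involved genuinely lie in $\TT(_H\Mm)$, that their cores are identified by a natural linear bijection, and that this bijection together with its inverse respects the partial representation.

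For the first isomorphism, set $M_0=H\triangleright TM$, the $H$-submodule of $M$ generated by $TM$. I would first verify that $(M_0,T|_{M_0})$ is an object of $\TT(_H\Mm)$: for $n\in TM$ one has $T(n)=n$, so $T(h\triangleright n)=\pi(h)(n)\in TM$; applying $T$ to a generic element $\sum_i h_i\triangleright n_i$ of $M_0$ therefore lands in $TM\subseteq M_0$, showing $T$ restricts to a projection on $M_0$. Since $M_0$ is a submodule it is stable under each $T_h$, so the identity $T_h\circ T=T\circ T_h$ restricts to $M_0$, giving the $c$-condition there. The decisive point is the computation of the core: $T(M_0)\subseteq TM$ by the above, while $n=T(1_H\triangleright n)\in T(M_0)$ for every $n\in TM$, whence $T(M_0)=TM$. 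Thus $(M,T)$ and $(M_0,T)$ have the very same core $TM$ and the very same induced partial representation, and $\mathrm{id}_{TM}$ is an isomorphism between them: the morphism condition reduces to $T(h\triangleright m)=T|_{M_0}(h\triangleright m)$ for $m\in TM$, which holds since $T|_{M_0}$ is the restriction of $T$, and the identical condition certifies the inverse.

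For the consequence, the hypothesis $TM'=0$ gives $M'\subseteq\ker T$, so $T$ descends to a well-defined $k$-linear projection $\ol{T}$ on $M/M'$ (this is what the statement abbreviates to $T$). A short computation shows $\ol{T}_h$ is precisely the descent of $T_h$, so the equality $T_h\circ T=T\circ T_h$ descends to $\ol{T}_h\circ\ol{T}=\ol{T}\circ\ol{T}_h$ and $(M/M',\ol{T})\in\TT(_H\Mm)$. Writing $q:M\to M/M'$ for the quotient map, one has $\ol{T}(M/M')=q(TM)$, and the restriction $q|_{TM}:TM\to q(TM)$ has kernel $TM\cap M'$; any $x$ there satisfies $x=T(x)\in T(M')=0$, so $q|_{TM}$ is bijective. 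It intertwines the partial representations since $\ol{T}(h\triangleright q(m))=q(T(h\triangleright m))$ for $m\in TM$, and because $T(h\triangleright m)\in TM$ the same equality read backwards shows its inverse is a morphism too. Hence $q|_{TM}$ is the desired isomorphism $(M,T)\cong(M/M',\ol{T})$; equivalently, this is the first statement once one notes that $q$ identifies the cores.

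The only genuinely delicate points are the verifications that $(H\triangleright TM,T)$ and $(M/M',\ol{T})$ really are objects of $\TT(_H\Mm)$ — that is, that $T$ restricts (respectively descends) to a projection \emph{satisfying the $c$-condition} — together with the routine but easily misstated check that the core-level bijections and their inverses satisfy the intertwining relation defining morphisms in this category. Everything else is forced by the single observation that a morphism in $\TT(_H\Mm)$ sees only the core and its partial representation.
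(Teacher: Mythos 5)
Your proof is correct and follows essentially the same route as the paper: both rest on the observation that a morphism in $\TT(_H\Mm)$ only sees the core $TM$ and its induced partial representation, and both verify $T(H\triangleright TM)=TM$ so that the identity (resp.\ the quotient map restricted to $TM$) furnishes the isomorphism. You supply more detail than the paper does --- in particular the checks that $T$ restricts, respectively descends, to a projection satisfying the $c$-condition, and that $q|_{TM}$ is injective because $TM\cap M'=0$ --- but these are exactly the verifications the paper leaves implicit.
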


\begin{proof}
First observe that $TM=T(H\triangleright TM)$. Indeed, since $H\triangleright TM\subset M$ clearly $T(H\triangleright TM)\subset TM$. On the other hand, for any $T(m)\in TM$ we find that 
$$T(m)=T^2(m)=T(1_H\triangleright T(m))\in T(H\triangleright TM).$$
Then clearly $id_{TM}$ is an isomorphism between $(M,T)$ and $(H\triangleright TM,T)$ in $\TT(_H\Mm)$.

For the second statement, remark that since $TM'=0\subset M'$, $T$ induces a well defined projection $M/M'\to M/M'$ (which we will denote again by $T$) and the pair $(M/M',T)$ is again an element of $\TT(_H\Mm)$. Furthermore, since $H\triangleright TM=H\triangleright T(M/M')$, it follows from the first part of the Lemma that the canonical projection $M\to M/M'$ defines an isomorphism $(M,T)\cong (M/M',T)$ in $\TT(_H\Mm)$.
\end{proof}

As a consequence of the previous lemma, we can always consider an object $(M,T)$ in $\TT(_H\Mm)$ to be `minimal' in the the sense that $M$ does not contain any $H$-submodules that are annihilated by $T$. 

\begin{proposition}\prlabel{Rff}
With notations as above, the inclusion functor $i:{}_H\Mm\to {}_H\Mm^{par}$ from \exref{trivialmod} factors as a composition of fully faithful functors
$$\xymatrix{
{}_H\Mm \ar[rr]^-{I} && \TT(_H\Mm)  \ar[rr]^-{R} && {}_H\Mm^{par}
}$$
defined by $I(M)=(M,id_M)$ and $R(M,T)=(TM,\pi_{TM})$ where $\pi_{TM}:H\to \End(TM),\ \pi_{TM}(h)(m)=T(h\triangleright m)$ for all $h\in H$ and $m\in TM$. 

The functor $R$ will be called the {\em restriction functor}.
\end{proposition}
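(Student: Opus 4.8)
The plan is to establish three things: that $I$ and $R$ are well-defined functors, that each is fully faithful, and that their composite equals the inclusion functor $i$. The guiding observation is that the category $\TT(_H\Mm)$ has been set up so that its morphism condition matches exactly the morphism condition in ${}_H\Mm^{par}$ under $R$, and the $H$-linearity condition in ${}_H\Mm$ under $I$; once this is made precise, full faithfulness becomes essentially automatic.

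First I would check well-definedness on objects. For $I(M)=(M,\mathrm{id}_M)$, the identity is a projection satisfying the $c$-condition, which is the content of \exref{trivial}, so $(M,\mathrm{id}_M)$ is a legitimate object of $\TT(_H\Mm)$. For $R(M,T)=(TM,\pi_{TM})$, the fact that $\pi_{TM}(h)(m)=T(h\triangleright m)$ defines a partial representation on $TM$ is precisely \prref{projections}, so $(TM,\pi_{TM})$ is a genuine partial $H$-module.

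Next I would treat morphisms and functoriality. A morphism $f\colon (M,T)\to (N,S)$ in $\TT(_H\Mm)$ is by definition a $k$-linear map $f\colon TM\to SN$ with $f(T(h\triangleright m))=S(h\triangleright f(m))$ for all $m\in TM$ and $h\in H$. Rewriting the left-hand side as $f(\pi_{TM}(h)(m))$ and the right-hand side as $\pi_{SN}(h)(f(m))$, this is exactly the statement that $f$ is a morphism of partial modules $R(M,T)\to R(N,S)$; hence I set $R(f)=f$. For $I$, a morphism $(M,\mathrm{id}_M)\to (N,\mathrm{id}_N)$ is a $k$-linear map $f\colon M\to N$ with $f(h\triangleright m)=h\triangleright f(m)$, i.e.\ an $H$-linear map, so $I(f)=f$. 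Preservation of identities is clear; for composition I would verify directly that if $f\colon(M,T)\to(N,S)$ and $g\colon(N,S)\to(P,U)$ satisfy the commutation relation then so does $g\circ f$, using $g(f(T(h\triangleright m)))=g(S(h\triangleright f(m)))=U(h\triangleright g(f(m)))$; this simultaneously confirms that composition in $\TT(_H\Mm)$ is well-defined and that both $R$ and $I$ respect it.

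Full faithfulness then follows at once: the computations above show that the defining equations for morphisms in $\TT(_H\Mm)$ coincide term-by-term with those for morphisms in the respective target categories, so both $\Hom$-assignments are the identity on the underlying linear maps and hence bijective. Finally, for the factorization I would compute $R(I(M))=R(M,\mathrm{id}_M)=(M,\pi_M)$ with $\pi_M(h)(m)=\mathrm{id}_M(h\triangleright m)=h\triangleright m$, which is precisely the partial representation attached to the global module $M$ in \exref{trivialmod}; thus $R\circ I=i$. Honestly there is no deep obstacle here: the only point requiring genuine (if short) verification is that the composite of two morphisms in $\TT(_H\Mm)$ remains inside the class of maps satisfying the commutation relation, since everything else is a direct matching of definitions guaranteed by \prref{projections} and \exref{trivial}.
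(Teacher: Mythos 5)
Your proposal is correct and follows essentially the same route as the paper's own (much terser) proof: object-level well-definedness via \exref{trivial} and \prref{projections}, and full faithfulness from the observation that the morphism condition in $\TT({}_H\Mm)$ is by definition identical to that in ${}_H\Mm^{par}$ (resp.\ to $H$-linearity for $I$). Your additional checks of composition, identities, and the explicit computation $R\circ I=i$ are details the paper leaves implicit but are entirely in the spirit of its argument.
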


\begin{proof}
Obviously, $I$ is a fully faithful functor (see also \exref{trivial}). We know from \prref{projections} that $R(M,T)$ is indeed a partial $H$-module, and by definition a morphism $f:(M,T)\to (N,S)$ in $\TT(_H\Mm)$ is exactly a morphism $f:TM\to TS$ in ${}_H\Mm^{par}$.
\end{proof}

The problem of globalization, which will be discussed in the next sections 
consists in constructing a left adjoint to the inclusion functor $i$, which moreover corestricts to an equivalence between ${_H\Mm^{par}}$ and $\TT(_H\Mm)$.

\section{Dilations}

\begin{defi} Let $H$ be a Hopf algebra. A {\em dilation} of a partial $H$-module $(M,\pi)$ is a pair $((N, T), \theta )$ where $(N,T)$ is an object of the category $\mathbb{T}({}_H \mathcal{M})$ and $\theta:(M,\pi)\to R(N,T)$ is an isomorphism of partial $H$-modules.

We say that a dilation $((N,T), \theta )$ is {\em proper} if $N$ is generated by $TN=\theta(M)$ as an $H$-module and we say it is {\em minimal} if $N$ does not contain any $H$-submodule that is annihilated by $T$.
\end{defi}

\begin{remarks}
Notice that $\theta:M\to TN$ will be a morphism of partial $H$-modules if and only if  $\theta (\pi (h)(m))=T(h\triangleright \theta (m))$, for every $m\in M$ and $h\in H$. 
Furthermore, another way to understand a dilation is as a quadruple $(\theta:M \leftrightarrows N:\varpi)$ where $M$ is a partial $H$-module, $N$ is a global $H$-module, and $\theta$ and $\varpi$ are linear maps satisfying $\varpi\circ \theta=id_M$ and $T=\theta\circ \varpi$ satisfies the $c$-condition and $h\cdot m=\varpi(h\triangleright \theta(m))$ for all $m\in M$ and $h\in H$.

Thanks to \leref{minimal}, we see that for any dilation $((N,T), \theta )$ the object $(N,T)$ is isomorphic to an object $(N',T')$ in the category $\mathbb{T}({}_H \mathcal{M})$, such that $((N',T'), \theta )$ is a proper and minimal dilation.
\end{remarks}

We will now present our first main result, which generalizes the globalization of partial actions to dilations of partial $H$-modules.

\begin{theorem} \label{dilationtheorem}\thlabel{dilation} Let $H$ be a Hopf algebra, then every partial $H$-module $(M,\pi)$ admits a (proper and minimal) dilation $((\ol M,T_\pi), \varphi )$. We call this the {\em standard dilation} of $M$.
\end{theorem}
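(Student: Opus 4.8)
The plan is to realise the dilation inside the function space $\Hom_k(H,M)$, imitating the construction of the enveloping action for partial module algebras. First I endow $\Hom_k(H,M)$ with the (global) left $H$-module structure $(g\triangleright f)(h)=f(hg)$, and I define $\varphi\colon M\to\Hom_k(H,M)$ by $\varphi(m)(h)=\pi(h)(m)$. Evaluating at $1_H$ and using (PR1) shows that $\varphi$ is injective. I then set $\ol M:=H\triangleright\varphi(M)$, the $H$-submodule generated by the image of $\varphi$; this is a global $H$-module which is by construction generated by $\varphi(M)$, so it is spanned by the elements $g\triangleright\varphi(m)$.

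Next I introduce the projection $T_\pi\colon\Hom_k(H,M)\to\Hom_k(H,M)$, $T_\pi(f)=\varphi(f(1_H))$, that is $T_\pi(f)(h)=\pi(h)(f(1_H))$. A direct check gives $T_\pi^2=T_\pi$, shows that $T_\pi$ fixes $\varphi(M)$ pointwise, and that its image is exactly $\varphi(M)$; since $T_\pi(g\triangleright\varphi(m))=\varphi(\pi(g)(m))\in\varphi(M)$, the map $T_\pi$ restricts to a projection of $\ol M$ with image $T_\pi\ol M=\varphi(M)$. Taking $\varphi$ as the comparison map, it identifies $M$ with $T_\pi\ol M$ as vector spaces, and it is a morphism of partial modules because
\[
\varphi(\pi(h)(m))(k)=\pi(k)\pi(h)(m)=\pi(k)\big(\varphi(m)(h)\big)=T_\pi(h\triangleright\varphi(m))(k);
\]
thus $\varphi\colon(M,\pi)\to R(\ol M,T_\pi)$ is an isomorphism of partial $H$-modules.

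The heart of the proof, and the step I expect to be the main obstacle, is verifying that $T_\pi$ satisfies the $c$-condition on $\ol M$. Unwinding the definitions gives, for every $f$, the formula $T_{\pi,h}(f)(k)=\pi(kh_{(1)})\big(f(S(h_{(2)}))\big)$; since $T_\pi$, $T_{\pi,h}$ and this formula are all linear in $f$, it suffices to check $T_\pi\circ T_{\pi,h}=T_{\pi,h}\circ T_\pi$ on the spanning set $\{g\triangleright\varphi(m)\}$ of $\ol M$. For such an element both composites reduce (using $T_\pi(g\triangleright\varphi(m))=\varphi(\pi(g)(m))$) to the comparison
\[
\pi(k)\pi(h_{(1)})\pi(S(h_{(2)})g)(m)=\pi(kh_{(1)})\pi(S(h_{(2)}))\pi(g)(m),
\]
which I would prove by first using (PR3) to rewrite the left-hand side as $\pi(k)\pi(h_{(1)})\pi(S(h_{(2)}))\pi(g)(m)$, and then applying (PR2), with the roles of $h$ and $k$ interchanged, to turn $\pi(k)\pi(h_{(1)})\pi(S(h_{(2)}))$ into $\pi(kh_{(1)})\pi(S(h_{(2)}))$. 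This yields the $c$-condition, whence $(\ol M,T_\pi)\in\TT(_H\Mm)$ and $R(\ol M,T_\pi)$ is a partial $H$-module by \prref{projections}.

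It remains to record that the dilation is proper and minimal. Properness holds by construction, since $\ol M=H\triangleright\varphi(M)=H\triangleright T_\pi\ol M$. For minimality, let $P\subseteq\ol M$ be an $H$-submodule with $T_\pi(P)=0$; then for each $p\in P$ and $g\in H$ the element $g\triangleright p$ again lies in $P$, so that $0=T_\pi(g\triangleright p)(1_H)=p(g)$, forcing $p=0$ and hence $P=0$. Therefore $((\ol M,T_\pi),\varphi)$ is the desired proper and minimal standard dilation.
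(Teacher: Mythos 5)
Your proposal is correct and follows essentially the same route as the paper: the same ambient module $\Hom_k(H,M)$ with action $(g\triangleright f)(h)=f(hg)$, the same maps $\varphi$ and $T_\pi(f)=\varphi(f(1_H))$, and the same key verification of the $c$-condition via the formula $T_{\pi,h}(f)(k)=\pi(kh_{(1)})(f(S(h_{(2)})))$ followed by an application of (PR3) and then (PR2). The only cosmetic difference is that you check the $c$-condition on the spanning set $\{g\triangleright\varphi(m)\}$ rather than on a general element $\sum_i h_i\triangleright\varphi(m_i)$, which is legitimate since all maps involved are linear.
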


\begin{proof} Let $(M,\pi )$ be a partial $H$-module and consider the $k$-vector space $\Hom_k (H,M)$. This vector space has a natural $H$-module 
structure $\triangleright :H\otimes \Hom_k (H,M)\rightarrow \Hom_k (H,M)$ given by
\[
(h\triangleright f)(k)=f(kh).
\]
Define the linear map $\varphi :M\rightarrow \Hom_k (H,M)$ by
\[
\varphi (m)(h)=\pi (h)(m) .
\]
It is easy to see that this map is injective. Indeed, take $m\in \mbox{Ker} (\varphi)$, i.e.\ $\varphi (m)(h)=0$ for all $h\in H$. Then
\[
m=\pi (1_H )(m)=\varphi (m)(1_H) =0 .
\]
Denote by $\overline{M}$ the $H$-submodule of $\Hom_k (H,M)$ generated by the image of $\varphi$, that is $\overline{M}=H\triangleright \varphi (M)$. Writing $f=\sum_i h_i \triangleright \varphi (m_i )$, a typical element of $\ol M$, we obtain for all $k\in H$
\begin{eqnarray} 
f(k)&=&\sum_i\varphi(m_i)(kh_i)=\sum_i\pi(kh_i)(m_i) . \eqlabel{eq1}
\end{eqnarray}
Applying this twice we find furthermore that
\begin{eqnarray}
\pi(k_{(1)})f(S(k_{(2)}))&\stackrel{\equref{eq1}}{=}& \sum_i \pi(k_{(1)})\pi(S(k_{(2)})h_i)(m_i) \stackrel{(PR3)}{=} 
 \pi(k_{(1)})\pi(S(k_{(2)}))\pi(h_i)(m_i) \nonumber \\
 &\stackrel{\equref{eq1}}{=}& \pi(k_{(1)})\pi(S(k_{(2)}))(f(1_H)) . \eqlabel{eq2}
\end{eqnarray} 
We now define the linear map 
$$T_{\pi}: \overline{M}\rightarrow \overline{M},\ T_{\pi}(f)=\varphi (f(1_H))
$$
where, as before, $f=\sum_i h_i \triangleright \varphi (m_i )\in\overline{M}$. 
Let us verify that $T_{\pi}$ is a projection:
\[
T_{\pi} \circ T_{\pi} (f)=\varphi (T_{\pi}(f)(1_H))=\varphi (\varphi (f(1_H))(1_H))=\varphi (\pi (1_H)( f(1_H))) =\varphi (f(1_H))=T_{\pi}(f) .
\]
By definition, it is obvious that $T_{\pi}(\overline{M})\subseteq \varphi (M)$, on the other hand, if $m\in M$ then
\[
\varphi (m)=\varphi (\pi (1_H)(m))=\varphi (\varphi (m)(1_H))=T_{\pi} (\varphi (m)) .
\]
Therefore $\varphi (M)=T_{\pi}(\overline{M})$.

To check that $T_{\pi}$ satisfies the $c$-condition, consider for 
each $h\in H$ the linear operator $T_h :\overline{M} \rightarrow \overline{M}$ which is given by
\[
T_h (f)= h_{(1)}\triangleright T_{\pi} (S(h_{(2)})\triangleright f) .
\]
Then  we find 
\begin{eqnarray}
T_h (f)(k)&=&  \big(h_{(1)}\triangleright T_{\pi} (S(h_{(2)})\triangleright f)\big)(k)
= \big(T_{\pi} (S(h_{(2)})\triangleright f)\big)(kh_{(1)}) \nonumber \\
&=& \varphi((S(h_{(2)})\triangleright f)(1_H))(kh_{(1)}) 
= \varphi(f(S(h_{(2)})))(kh_{(1)}) \nonumber\\
&=& \pi(kh_{(1)})(f(S(h_{(2)}))) . \eqlabel{eq3}
\end{eqnarray}

Take $h,k\in H$ and $f\in \overline{M}$, then
\begin{eqnarray*}
T_{\pi} \circ T_h (f)(k) & = & \varphi (T_h (f)(1_H))(k) = \pi(k)(T_h (f)(1_H)) \\
&\stackrel{\equref{eq3}}{=}& 
\pi(k)\pi(h_{(1)})(f(S(h_{(2)}))) \stackrel{\equref{eq2}}{=}
\pi(k)\pi(h_{(1)})\pi(S(h_{(2)}))(f(1_H)) \\
&\stackrel{(PR2)}{=}& \pi(kh_{(1)})\pi(S(h_{(2)}))(f(1_H))
= \pi (kh_{(1)}) (\varphi (f(1_H))(S(h_{(2)})))\\ &=& \pi (kh_{(1)}) \big(T_{\pi}(f)(S(h_{(2)}))\big)
\stackrel{\equref{eq3}}{=} T_h \circ T_{\pi} (f)(k) .
\end{eqnarray*}

By Proposition \ref{projections}  $T_{\pi}(\overline{M})=\varphi (M)$ is a partial $H$-module via
$\pi_{T_{\pi}} :H\rightarrow \End_k (\overline{M})$, given by 
\[
\pi_{T_{\pi}}(h) =T_{\pi}(h\triangleright -) .
\]

Moreover
\[
\pi_{T_{\pi}}(h)\varphi (m)=T_{\pi}(h\triangleright \varphi (m))=\varphi ((h\triangleright \varphi (m))(1_H))=\varphi (\varphi (m)(h))=
\varphi (\pi (h)(m)), 
\]
i.e. \ $\varphi$ is a morphism of partial $H$-modules. Therefore, the pair $((\overline{M} ,T_{\pi}),\varphi )$ is a dilation. Let us verify that this dilation is minimal. 
Take any $f=\sum_i h_i \triangleright \varphi (m_i ) \in \ol M$ such that $T_{\pi}(h\triangleright f)=0$ for every $h\in H$. Then we find 
\begin{eqnarray*}
0&=& T_{\pi}(h\triangleright f) = \varphi((h\triangleright f)(1_H)) = \varphi(f(h)) .
\end{eqnarray*}
As the map $\varphi$ is injective, we conclude that $f(h)=0$ for every $h\in H$, then $f=0$. Therefore 
the dilation 
$((\overline{M} ,T_{\pi}),\varphi )$ is minimal.
\end{proof}

We now can easily prove the main result of this paper.

\begin{theorem}\thlabel{Requivalence}
The restriction functor $R:\TT(_H\Mm)\to {_H\Mm^{par}}$ is an equivalence of categories.
\end{theorem}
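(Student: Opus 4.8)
The plan is to show that $R$ is an equivalence by exhibiting a quasi-inverse, for which the standard dilation of \thref{dilation} is the natural candidate. Concretely, I would define a functor $D:{_H\Mm^{par}}\to\TT(_H\Mm)$ on objects by $D(M,\pi)=(\ol M,T_\pi)$, the standard dilation constructed in the previous theorem, and then establish that $RD\cong \mathrm{id}$ and $DR\cong\mathrm{id}$ via natural isomorphisms. The first of these is essentially already in hand: \thref{dilation} shows that $\varphi:M\to T_\pi(\ol M)=R(\ol M,T_\pi)$ is an isomorphism of partial $H$-modules, so $RD(M,\pi)\cong (M,\pi)$, and $\varphi$ should be checked to be natural in $(M,\pi)$.

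The first real task is to make $D$ into a functor, i.e.\ to define it on morphisms. Given a morphism $g:(M,\pi)\to(M',\pi')$ of partial $H$-modules, I would define $D(g):\ol M\to\ol{M'}$ by sending a typical element $f=\sum_i h_i\triangleright\varphi(m_i)$ to $\sum_i h_i\triangleright\varphi'(g(m_i))$, or equivalently by postcomposition, $D(g)(f)=g\circ f$ using $f\in\Hom_k(H,M)$. One must verify this is well defined (independent of the chosen presentation of $f$), is $H$-linear for the action $(h\triangleright f)(k)=f(kh)$, and commutes with the projections, i.e.\ $D(g)\circ T_\pi=T_{\pi'}\circ D(g)$; the last identity reduces via the formula $T_\pi(f)=\varphi(f(1_H))$ to the fact that $g$ intertwines $\pi$ and $\pi'$, hence that $g$ is a morphism in $\TT(_H\Mm)$.

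The heart of the argument is the natural isomorphism $DR\cong\mathrm{id}$. Given $(N,T)\in\TT(_H\Mm)$, applying $R$ yields the partial module $(TN,\pi_{TN})$ with $\pi_{TN}(h)(m)=T(h\triangleright m)$, and applying $D$ produces $(\ol{TN},T_{\pi_{TN}})$, where $\ol{TN}=H\triangleright\varphi(TN)\subseteq\Hom_k(H,TN)$. I would construct a comparison map $\Theta:\ol{TN}\to N$ by $\Theta(\sum_i h_i\triangleright\varphi(m_i))=\sum_i h_i\triangleright m_i$, the obvious candidate coming from the evaluation/counit. Checking that $\Theta$ is well defined requires showing that $\sum_i\pi_{TN}(kh_i)(m_i)=0$ for all $k$ forces $\sum_i h_i\triangleright m_i=0$ in $N$; here the key input is that $\pi_{TN}(kh_i)(m_i)=T(kh_i\triangleright m_i)$ together with the $c$-condition on $T$, and I expect this well-definedness and injectivity of $\Theta$ to be the main obstacle. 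One then verifies that $\Theta$ is $H$-linear, surjective onto $N$ (using minimality, or corestricting to $H\triangleright TN$ via \leref{minimal}), and intertwines the projections $T_{\pi_{TN}}$ and $T$. By \leref{minimal} we may assume $(N,T)$ is proper and minimal, so that $N=H\triangleright TN$ and $N$ has no $H$-submodule killed by $T$; minimality of the standard dilation then matches minimality of $(N,T)$, forcing $\Theta$ to be an isomorphism in $\TT(_H\Mm)$. Finally I would check naturality of $\Theta$ in $(N,T)$ and conclude that $R$ and $D$ are mutually quasi-inverse, so $R$ is an equivalence of categories.
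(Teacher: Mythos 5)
Your argument is correct, but it takes a noticeably heavier route than the paper. The paper's proof is two lines: $R$ is fully faithful by \prref{Rff} --- indeed, a morphism $(M,T)\to(N,S)$ in $\TT({}_H\Mm)$ is \emph{by definition} a $k$-linear map $TM\to SN$ compatible with the restricted actions, i.e.\ exactly a morphism $R(M,T)\to R(N,S)$ of partial modules --- and $R$ is essentially surjective by \thref{dilation}, since $\varphi:(M,\pi)\to R(\ol M,T_\pi)$ is an isomorphism. You instead build an explicit quasi-inverse $D$, which forces you to reprove, inline, the functoriality of the standard dilation (this is \prref{propdilationfunctor} and the corollary following \prref{uniquenessdilation} in the paper) and to construct the comparison map $\Theta:\ol{TN}\to N$, which is precisely the inverse of the universal map $\Phi:N\to\ol{TN}$ of \prref{uniquenessdilation}; your diagnosis that well-definedness of $\Theta$ is the crux, and that it is exactly minimality of $(N,T)$ that rescues it (via $T(H\triangleright x)=0$ for $x=\sum_i h_i\triangleright m_i$), is accurate. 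One simplification you overlooked: because morphisms in $\TT({}_H\Mm)$ only see the images of the projections, the natural isomorphism $DR(N,T)\cong(N,T)$ is already furnished by $\varphi_{TN}^{-1}:T_{\pi_{TN}}(\ol{TN})\to TN$ itself, with no need to compare the ambient modules $\ol{TN}$ and $N$ at all --- your map $\Theta$ on ambient modules is really an isomorphism in the category $\DD({}_H\Mm)$ introduced later in the paper. What your approach buys in exchange for the extra work is the explicit description of the quasi-inverse and the uniqueness/minimality analysis, which the paper defers to \prref{propdilationfunctor} and \prref{uniquenessdilation}.
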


\begin{proof}
We already know by \prref{Rff} that $R$ is fully faithful. Furthermore, by \thref{dilation} it follows that $R$ is essentially surjective and we can conclude that $R$ is an equivalence of categories.
\end{proof}

Note that the quasi-inverse of the restriction functor $R$ assigns to a partial module the $H$-module with projection from its standard dilation.

\begin{proposition}\prlabel{propdilationfunctor}
The construction of the standard dilation defines a functor
$$D:{_H\Mm^{par}}\to {_H\Mm},\quad D(M)=\ol M$$ 
that we call the {\em dilation functor}.
Moreover the functor $D$ is additive, faithful and it preserves monomorphisms, epimorphisms and arbitrary coproducts.
\end{proposition}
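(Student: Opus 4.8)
The plan is to define $D$ on morphisms by postcomposition and then check each property separately, isolating the preservation of coproducts as the only step that is not purely formal. Given a morphism $g:(M,\pi)\to (M',\pi')$ in ${}_H\Mm^{par}$, I would set $D(g)=\Hom_k(H,g)|_{\ol M}$, that is $D(g)(f)=g\circ f$. The first thing to verify is that this is well defined: since the $H$-action on $\Hom_k(H,-)$ is precomposition, $(h\triangleright f)(k)=f(kh)$, it commutes with postcomposition by $g$, so $\Hom_k(H,g)$ is $H$-linear; and because $g$ intertwines $\pi$ with $\pi'$ one has $g\circ \varphi(m)=\varphi'(g(m))$, whence $\Hom_k(H,g)$ carries the generating set $\varphi(M)$ of $\ol M=H\triangleright\varphi(M)$ into $\varphi'(M')$ and therefore carries all of $\ol M$ into $H\triangleright\varphi'(M')=\ol{M'}$. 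Functoriality and additivity of $D$ are then immediate, as $\Hom_k(H,-)$ is an additive functor and $D$ is its restriction.

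For faithfulness, I would note that if $D(g)=0$ then $\varphi'(g(m))=g\circ\varphi(m)=D(g)(\varphi(m))=0$ for every $m$, and since $\varphi'$ is injective (as shown in the proof of \thref{dilation}) this forces $g=0$. For the two exactness-type properties I would use that ${}_H\Mm^{par}\cong {}_{H_{par}}\Mm$ is abelian, so that monomorphisms and epimorphisms are exactly the injective and surjective morphisms. If $g$ is injective and $D(g)(f)=g\circ f=0$, then $f(k)=0$ for all $k\in H$, hence $f=0$ and $D(g)$ is injective. If $g$ is surjective, then for any generator $h\triangleright\varphi'(m')$ of $\ol{M'}$ I choose $m$ with $g(m)=m'$ and observe $D(g)(h\triangleright\varphi(m))=h\triangleright\varphi'(g(m))=h\triangleright\varphi'(m')$; as these elements generate $\ol{M'}$ and $D(g)$ is $H$-linear, $D(g)$ is surjective.

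The preservation of arbitrary coproducts is the one step requiring care, and is where I expect the main subtlety to lie. Write $M=\bigoplus_\alpha M_\alpha$, the coproduct in ${}_H\Mm^{par}$, which carries the diagonal partial representation $\pi$ restricting to $\pi_\alpha$ on each summand $M_\alpha$ with inclusion $\iota_\alpha$; I must show that the canonical comparison morphism $\bigoplus_\alpha D(M_\alpha)\to D(M)$ built from the maps $D(\iota_\alpha)$ is an isomorphism. The point to be careful about is that $\Hom_k(H,-)$ does \emph{not} commute with infinite direct sums, so one might fear that $D$ fails to preserve infinite coproducts. The resolution is that $\ol M$ is not all of $\Hom_k(H,M)$ but only the submodule generated by the elements $\varphi(m)$, each of which has finite support across the summands: writing $m=\sum_\alpha m_\alpha$ (a finite sum) gives $\varphi(m)=\sum_\alpha \Hom_k(H,\iota_\alpha)(\varphi_\alpha(m_\alpha))$, which lies in the $H$-submodule $\bigoplus_\alpha \Hom_k(H,M_\alpha)\subseteq \Hom_k(H,M)$. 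Using $h\triangleright\varphi(m)=\sum_\alpha \Hom_k(H,\iota_\alpha)\bigl(h\triangleright\varphi_\alpha(m_\alpha)\bigr)$ for one inclusion and $\varphi\circ\iota_\alpha=\Hom_k(H,\iota_\alpha)\circ\varphi_\alpha$ for the other, I would then verify the identity of $H$-submodules $\ol M=H\triangleright\varphi(M)=\bigoplus_\alpha\bigl(H\triangleright\varphi_\alpha(M_\alpha)\bigr)=\bigoplus_\alpha\ol{M_\alpha}$; this identification is precisely the comparison morphism, so $D$ preserves arbitrary coproducts.

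Apart from this last point, every assertion is a formal consequence of $D$ being the restriction of the postcomposition functor $\Hom_k(H,-)$ to the subfunctor $\ol{(-)}=H\triangleright\varphi(-)$; the only genuine inputs are the injectivity of $\varphi$ and the finite-support observation above.
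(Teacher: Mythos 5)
Your proof is correct, and all the individual verifications (faithfulness, preservation of monos, epis and coproducts) track the paper's own arguments closely; in particular your treatment of infinite coproducts via the finite-support observation is exactly the paper's. The one genuine difference is how you define $D$ on morphisms: you realize $D(g)$ as the restriction of the postcomposition map $\Hom_k(H,g)$ to the submodule $\ol M=H\triangleright\varphi(M)$, whereas the paper defines $\ol f$ by the formula $\ol f\bigl(\sum_i h_i\triangleright\varphi(m_i)\bigr)=\sum_i h_i\triangleright\theta(f(m_i))$ on a chosen presentation of each element and must then carry out an explicit computation to show this is independent of the presentation. Your device makes that well-definedness check vacuous (the map is already globally defined on $\Hom_k(H,M)$, and the intertwining relation $g\circ\varphi=\varphi'\circ g$ shows it carries generators of $\ol M$ into $\ol{M'}$), and it also streamlines the mono argument to the one-line observation that $g\circ f=0$ with $g$ injective forces $f=0$. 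What the paper's presentation buys in exchange is an explicit formula on generators, which it reuses verbatim later (e.g.\ in checking $\ol f\circ\ol T=\ol S\circ\ol f$ for the corollary on $\DD({}_H\Mm)$ and in the exactness discussion); but the two maps coincide, so nothing is lost either way.
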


\begin{proof}
First, let us verify that the construction of minimal dilations is coherent with morphisms. Let $f: (M, \pi ) \rightarrow (N, \rho )$ be a morphism in the category of partial $H$-modules, i.e.\ $f\circ \pi (h) =\rho (h) \circ f$ for every $h\in H$. Let $(\overline{M}, \overline{T}), \varphi )$ and $((\overline{N}, \overline{S}), \theta )$ be, respectively, the dilations of $(M, \pi )$ and $(N, \rho )$ as constructed above. Define the linear map $\overline{f}: \overline{M} \rightarrow \overline{N}$ as
\begin{equation}\eqlabel{Df}
\overline{f} (\sum_i h_i \triangleright \varphi (m_i ))= \sum_i h_i \triangleright \theta (f(m_i )) .
\end{equation}
Let us verify that $\overline{f}$ is well defined as a linear map. Consider a linear combination $\sum_i h_i \triangleright \varphi (m_i )=0$ in $\overline{M}$, then we  conclude that, for any $h\in H$, we have
\[
\sum_i \pi (hh_i )(m_i )=0 .
\]
Now, take any $h\in H$, then
\begin{eqnarray*}
\overline{f} (\sum_i h_i \triangleright \varphi (m_i ))(h) & = & \sum_i h_i \triangleright \theta (f(m_i )) (h) \\
& = & \sum_i \theta (f(m_i )) (hh_i ) \\
& = & \sum_i \rho (hh_i )(f(m_i ) ) \\
& = & f(\sum_i \pi (hh_i) (m_i))=0 .
\end{eqnarray*}
Therefore, $\overline{f}$ is well defined and, by construction, $\overline{f}$ is a morphism of $H$-modules. 
Hence $D:{_H\Mm^{par}}\to {_H\Mm}$, where $D(M)=\ol M$ and $D(f)=\ol f$, is a functor.

By construction it is clear that $\ol{f+g}=\ol f+\ol g$, so the functor is additive.

To see that the dilation functor is faithful, consider the diagram
\[
\xymatrix{
M\ar[r]^f \ar@{^(->}[d]_{\varphi_M} & N \ar@{^(->}[d]_{\varphi_N}\\
\ol M \ar[r]^-{\ol f} & \ol N
}
\]
Then clearly $\ol f$ is zero if and only if $f$ is zero.

Let $f: M\to N$ be an injective morphism of partial $H$-modules and consider the dilated map
$$\ol f:\ol M\to \ol N,\ \ol f(\sum_i h_i\triangleright \varphi(m_i))=\sum_i h_i\triangleright \varphi(f(m_i)).$$
Suppose that $\ol f(\sum_i h_i\triangleright \varphi(m_i))=0$, then we find for all $k\in H$ that
\begin{eqnarray*}
0&=& \sum_i kh_i\cdot f(m_i)\\
&=& f(\sum_i kh_i\cdot m_i) , 
\end{eqnarray*}
and by the injectivity of $f$ we obtain that $\sum_i kh_i\cdot m_i=0$ for all $k\in H$, so $\sum_ih_i\triangleright \varphi(m_i)=0$ and $\ol f$ is injective.

The fact that $D$ preserves epimorphisms is clear by the formula \equref{Df}.

For the last statement, consider the following commutative diagram
\[
\xymatrix{
M_i \ar[d]^-{\varphi_i} \ar[rrrr] &&&& \coprod_{i\in I} M_i \ar[d]^-\varphi  \\
\Hom(H,M_i) \ar[rr] &&
 \coprod_{i\in I}\Hom(H,M_i) \ar@{^(->}[rr] && \Hom(H,\coprod_{i\in I}M_i) 
}
\]

Hence, although $\Hom(H,\coprod_{i\in I}M_i)$ is bigger than $\coprod_{i\in I}\Hom(H,M_i)$, the image of the map $\varphi:\coprod_{i\in I}M_i\to \Hom(H,\coprod_{i\in I}M_i)$ as well as the $H$-module generated by it lies in
$\coprod_{i\in I}\Hom(H,M_i)$.
 Therefore, the standard dilation of $\coprod_{i\in I}M_i$ is a subspace of $\coprod_{i\in I}\Hom(H,M_i)$, 
  and from this it follows easily that $\ol{\coprod_{i\in I}M_i}=\coprod_{i\in I}\ol{M_i}$.
\end{proof}

\begin{remark}
From the previous proposition it is clear that the dilation functor is close to being exact. We will investigate this in the next section.
\end{remark}

The standard dilation satisfies the following universal property.

\begin{prop} \label{uniquenessdilation}\prlabel{uniquenessdilation} Let $H$ be a Hopf algebra and $(M,\pi )$ be a partial $H$-module. 
\begin{enumerate}[(i)]
\item For any other proper dilation $((N,T), \theta)$, there is a unique surjective $H$-linear map $\Phi : N \to \overline{M}$ such that $T_\pi\circ\Phi=\Phi\circ T$ and $\Phi \circ \theta =\varphi$;
\item The morphism $\Phi$ is a injective (hence bijective) if, and only if the dilation $((N,T),\theta )$ is minimal.
\end{enumerate}
\end{prop}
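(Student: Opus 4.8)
The plan is to produce $\Phi$ by an explicit formula and then verify the four required properties one at a time, the only non-routine ingredient being the guess for $\Phi$. Since $\overline{M}$ sits inside $\Hom_k(H,M)$, I would first define a map $\Phi\colon N\to \Hom_k(H,M)$ by
\[
\Phi(n)(h)=\theta^{-1}\big(T(h\triangleright n)\big),
\]
which makes sense because $T(h\triangleright n)\in TN$ and $\theta\colon M\to TN$ is an isomorphism; for fixed $n$ the assignment $h\mapsto \theta^{-1}(T(h\triangleright n))$ is $k$-linear, so indeed $\Phi(n)\in\Hom_k(H,M)$.

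First I would check $\Phi\circ\theta=\varphi$: for $m\in M$ one has $T(\theta(m))=\theta(m)$, and since $\theta$ is a morphism of partial modules $\theta(\pi(h)(m))=T(h\triangleright\theta(m))$, whence $\Phi(\theta(m))(h)=\theta^{-1}(T(h\triangleright\theta(m)))=\pi(h)(m)=\varphi(m)(h)$. Next, $H$-linearity of $\Phi$ is immediate from the module structure $(g\triangleright\Phi(n))(k)=\Phi(n)(kg)$ together with $h\triangleright(g\triangleright n)=hg\triangleright n$. Because the dilation is proper, a general $n\in N$ can be written $n=\sum_i g_i\triangleright\theta(m_i)$, and then $H$-linearity combined with $\Phi\circ\theta=\varphi$ gives $\Phi(n)=\sum_i g_i\triangleright\varphi(m_i)\in H\triangleright\varphi(M)=\overline{M}$; the same computation shows every element of $\overline{M}$ is attained, so $\Phi$ surjects onto $\overline{M}$, and it also forces uniqueness, since any $H$-linear map restricting to $\varphi$ on $\theta(M)$ must agree with $\Phi$ on the generating set $H\triangleright\theta(M)=N$. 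Finally the commutation $T_\pi\circ\Phi=\Phi\circ T$ I would verify pointwise: writing $m_0=\theta^{-1}(T(n))$, the partial-morphism property of $\theta$ gives $\pi(h)(m_0)=\theta^{-1}(T(h\triangleright T(n)))$, and then comparing $T_\pi(\Phi(n))(h)=\pi(h)(\Phi(n)(1_H))=\pi(h)(m_0)$ with $\Phi(T(n))(h)=\theta^{-1}(T(h\triangleright T(n)))$ yields equality. This proves (i).

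For (ii) I would analyse the kernel. From the formula, $\Phi(n)=0$ if and only if $T(h\triangleright n)=0$ for all $h\in H$ (using injectivity of $\theta^{-1}$), so
\[
\ker\Phi=\{n\in N\mid T(h\triangleright n)=0 \ \text{for all}\ h\in H\}.
\]
This set is an $H$-submodule (being the kernel of the $H$-linear map $\Phi$), it is annihilated by $T$ (take $h=1_H$), and conversely any $H$-submodule $M'\subseteq N$ with $T(M')=0$ satisfies $T(h\triangleright n)=0$ for $n\in M'$ since $h\triangleright n\in M'$, whence $M'\subseteq\ker\Phi$. Thus $\ker\Phi$ is the largest $H$-submodule of $N$ annihilated by $T$, and $\Phi$ is injective exactly when this submodule is zero, i.e.\ exactly when $((N,T),\theta)$ is minimal.

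The computations are all short; the only genuine step is recognising the formula $\Phi(n)(h)=\theta^{-1}(T(h\triangleright n))$, after which properness supplies both surjectivity and uniqueness while the partial-morphism property of $\theta$ supplies the intertwining relation. The one point to handle with a little care is the commutation $T_\pi\circ\Phi=\Phi\circ T$, where one must use that $\theta$ intertwines $\pi$ with the \emph{restricted} action $T(h\triangleright-)$ on $TN$ rather than with the global action on $N$.
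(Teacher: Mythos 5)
Your proof is correct and follows essentially the same route as the paper: the key identity in both arguments is $\theta(\Phi(n)(h))=T(h\triangleright n)$, from which surjectivity and uniqueness follow from properness and part (ii) follows by identifying $\ker\Phi$ with the largest $H$-submodule annihilated by $T$. The one (minor but pleasant) difference is that you define $\Phi$ by the closed formula $\Phi(n)(h)=\theta^{-1}(T(h\triangleright n))$ on all of $N$, which makes $\Phi$ manifestly well defined, whereas the paper defines it on representatives $\sum_i h_i\triangleright\theta(m_i)$ and must separately verify independence of the chosen representation.
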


\begin{proof} \ul{(i)}.  
Since $N=H\triangleright \theta (M)$, any $H$-linear morphism $\Phi:N\rightarrow \overline{M}$ satisfying $\Phi \circ \theta =\varphi$ should be given by 
$$\Phi(x)= \sum_i h_i\triangleright \varphi (m_i ),$$
where $x=\sum_i h_i \triangleright \theta (m_i)\in N=H\triangleright \theta (M)$. Let us verify that this formula is well defined, i.e.\ $\Phi$ exists and is unique. To this end, consider $x=\sum_i h_i \triangleright \theta (m_i) =0$ . Then we find for all $h\in H$ (we use that $\theta$ is a partial $H$-module map in the fourth equality)
\begin{eqnarray*}
\theta(\Phi(x)(h))&=& \sum_i \theta\big((h_i\triangleright \varphi (m_i ))(h)\big)
= \sum_i \theta\big(\varphi (m_i )(hh_i)\big) = \sum_i \theta\big(\pi(hh_i)(m_i )\big)\\
&=& T(\sum_i hh_i \triangleright \theta (m_i)) = T(h\triangleright x) = 0
\end{eqnarray*} 
and since $\theta$ is injective, it follows that $\Phi(x)(h)=0$, hence $\Phi(x)=0$. 

By construction, we have found that $\Phi$ is a surjective $H$-module morphism and $\Phi \circ \theta =\varphi$. Finally, for any $x=\sum_i h_i \triangleright \theta (m_i) \in N$,
\begin{eqnarray*}
\Phi (T(x)) & = & \Phi (T(\sum_i h_i \triangleright \theta (m_i)))=\Phi (\theta (\sum_i \pi (h_i ) (m_i)))\\
& = & \varphi (\sum_i \pi (h_i ) (m_i))=T_{\pi}(\sum_i h_i \triangleright \varphi (m_i))\\
& = & T_{\pi}(\Phi (x)) .
\end{eqnarray*}
Hence $\Phi\circ T=T_\pi\circ \Phi$.\\
\ul{(ii)}. Consider an element $x=\sum_i h_i \triangleright \theta (m_i) \in N=H\triangleright \theta(M)$.
Then we have that 
\[
T(h\triangleright x)=T(\sum_i hh_i \triangleright \theta (m_i))
=\theta (\sum_i \pi (hh_i )(m_i)).
\]
Hence by the injectivity of $\theta$, we have $T(H\triangleright x)=0$ if and only if 
$$0=\sum_i \pi (hh_i )(m_i)=\sum_i \varphi(m_i)(hh_i)=\sum_i (h_i\triangleright \varphi(m_i))(h)=\Phi(x)(h),$$ 
for all $h\in H$. In other words, $x\in \ker\Phi$ if and only if $T(H\triangleright x)=0$. 
Hence the injectivity of $\Phi$ is equivalent to the minimality of $N$.
\end{proof}

As a corollary, we find now that the category of partial $H$-modules can also be described in an alternative way.
Consider a category $\DD(_H\Mm)$ whose objects are pairs $(M,T)$ where $M$ is a global $H$-module, $T$
is a projection satisfying the $c$-condition and  $((M,T),\varphi)$, with $\varphi:TM\to M$ the canonical inclusion, is a proper minimal dilation (of the partial module $TM$). 
A morphisms $f:(M,T)\to (N,S)$ is an $H$-linear map $f:M\to N$ such that $S\circ f=f\circ T$.

\begin{corollary}
The categories ${_H\Mm^{par}}$ and $\TT(_H\MM)$ are equivalent to $\DD(_H\Mm)$.
\end{corollary}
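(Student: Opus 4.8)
The plan is to exploit the equivalence $R:\TT(_H\Mm)\to{_H\Mm^{par}}$ already established in \thref{Requivalence}, so that it suffices to produce an equivalence between $\DD(_H\Mm)$ and ${_H\Mm^{par}}$; composing the two then yields the full statement. I would build this equivalence out of the two functors already at hand, the standard dilation and the restriction. In one direction, define $D:{_H\Mm^{par}}\to\DD(_H\Mm)$ on objects by $D(M,\pi)=(\ol M,T_\pi)$, the standard dilation of \thref{dilation}, and on morphisms by $f\mapsto\ol f$ as in \prref{propdilationfunctor}. The object $(\ol M,T_\pi)$ lies in $\DD(_H\Mm)$ precisely because \thref{dilation} asserts that $((\ol M,T_\pi),\varphi)$ is a proper and minimal dilation and $\varphi$ identifies $M$ with $T_\pi\ol M$, so that up to this identification $\varphi$ is the canonical inclusion. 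For the morphisms, $\ol f$ is $H$-linear by \prref{propdilationfunctor}, and a short computation using the formula $T_\pi(\xi)=\varphi(\xi(1_H))$ for $\xi\in\ol M$ shows $T_\pi\circ\ol f=\ol f\circ T_\pi$, so $\ol f$ is indeed a morphism in $\DD(_H\Mm)$.

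In the other direction, define $\Gamma:\DD(_H\Mm)\to{_H\Mm^{par}}$ on objects by $\Gamma(M,T)=(TM,\pi_{TM})$ with $\pi_{TM}(h)(m)=T(h\triangleright m)$, agreeing on objects with the restriction functor $R$ of \prref{Rff}. A morphism $g:(M,T)\to(N,S)$ in $\DD(_H\Mm)$ is $H$-linear with $S\circ g=g\circ T$; since then $g(m)=g(Tm)=S(g(m))\in SN$ for every $m\in TM$, it restricts to $g|_{TM}:TM\to SN$, and the $H$-linearity of $g$ together with $Sg=gT$ gives $g|_{TM}(T(h\triangleright m))=S(g(h\triangleright m))=S(h\triangleright g|_{TM}(m))$, so $g|_{TM}$ is a morphism of partial modules. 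Thus $\Gamma$ is a well-defined functor.

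It remains to check that the two composites are naturally isomorphic to the identities. For $\Gamma\circ D$ we have $\Gamma(D(M,\pi))=(T_\pi\ol M,\pi_{T_\pi})=R(\ol M,T_\pi)$, and $\varphi:(M,\pi)\to(T_\pi\ol M,\pi_{T_\pi})$ is an isomorphism of partial modules by \thref{dilation}; naturality in $(M,\pi)$ is immediate from $\ol f\circ\varphi=\varphi\circ f$. For $D\circ\Gamma$ we have $D(\Gamma(M,T))=(\overline{TM},T_\pi)$, the standard dilation of the partial module $TM$. This is where the \emph{main work} lies: since $(M,T)$ is by definition of $\DD(_H\Mm)$ a proper minimal dilation of $TM$, \prref{uniquenessdilation} furnishes a unique $H$-linear isomorphism $\Phi:M\to\overline{TM}$ satisfying $T_\pi\circ\Phi=\Phi\circ T$ and $\Phi\circ\iota=\varphi$, with $\iota:TM\hookrightarrow M$ the inclusion, which is exactly an isomorphism $(M,T)\cong(\overline{TM},T_\pi)$ in $\DD(_H\Mm)$.

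The only remaining obstacle, and the step I expect to require the most care, is the naturality of $\Phi$. Given a morphism $g:(M,T)\to(N,S)$ in $\DD(_H\Mm)$, both $\Phi_{(N,S)}\circ g$ and $\overline{g|_{TM}}\circ\Phi_{(M,T)}$ are $H$-linear maps $M\to\overline{SN}$; since $M=H\triangleright TM$ is generated by $TM$ and both maps agree on $TM$, where they reduce to $\varphi_{SN}\circ g|_{TM}$ by the defining relations $\Phi\circ\iota=\varphi$ and $\overline{g|_{TM}}\circ\varphi=\varphi\circ g|_{TM}$, the uniqueness clause of \prref{uniquenessdilation} forces them to coincide. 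This establishes $D\circ\Gamma\cong\mathrm{Id}$, so that $D$ and $\Gamma$ are quasi-inverse equivalences; combining with \thref{Requivalence} gives ${_H\Mm^{par}}\simeq\TT(_H\Mm)\simeq\DD(_H\Mm)$, as claimed.
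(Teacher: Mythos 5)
Your proposal is correct and follows essentially the same route as the paper: the standard dilation defines the functor ${_H\Mm^{par}}\to\DD(_H\Mm)$, restriction (through the forgetful functor to $\TT(_H\Mm)$) defines the functor back, and \prref{uniquenessdilation} supplies the isomorphism $D\circ\Gamma\cong\mathrm{Id}$. You are in fact slightly more careful than the paper, which simply asserts that the composite is ``again the identity'' where you verify naturality of $\Phi$ by comparing the two $H$-linear maps on the generating subspace $TM$.
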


\begin{proof}
Let us check that the construction of the standard dilation yields a well-defined functor $F:{_H\Mm^{par}}\to\DD(_H\Mm)$. It already follows from  \prref{propdilationfunctor} that this functor is well-defined on objects and sends a morphism $f:(M,\pi)\to (N,\rho)$ in ${_H\Mm^{par}}$ to an $H$-linear map $\ol f:\ol M\to \ol N$, where $((\ol M,\ol T),\varphi)$ and $((\ol N,\ol S),\theta)$ are the standard dilations. It remains to verify that $\overline{f}\circ \overline{T}=\overline{S}\circ \overline{f}$. Take an element $x=\sum_i h_i \triangleright \varphi (m_i )\in \overline{M}$, then
\begin{eqnarray*}
\overline{f}(\overline{T}(x)) & = & \overline{f} (\varphi (\sum_i (h_i \triangleright \varphi (m_i ))(1_H ))) 
 =  \overline{f} (\varphi (\sum_i  \varphi (m_i )(h_i ))) \\
& = & \overline{f} (\varphi (\sum_i  \pi (h_i ) (m_i ))) 
 =  \theta (\sum_i  f(\pi (h_i ) (m_i ))) \\
& = & \theta (\sum_i  \rho (h_i ) (f(m_i ))) 
 =  \theta (\sum_i  \theta (f(m_i ))(h_i )) \\
& = & \theta (\sum_i  (h_i \triangleright \theta (f(m_i )))(1_H )) 
 =  \theta (\overline{f} (\sum_i  h_i \triangleright \varphi (m_i ))(1_H )) \\
& = & \overline{S} (\overline{f}(x)).
\end{eqnarray*} 
On the other hand, there is an obvious functor $G:\DD(_H\Mm)\to\TT(_H\Mm)$ which is the identity on objects. Finally the composition $G\circ F:{_H\Mm^{par}}\to \TT(_H\Mm)$ is clearly the quasi-inverse of the restriction functor as described after \thref{Requivalence}. Conversely, because of \prref{uniquenessdilation}, the functor $F\circ G$ is again the identity.
\end{proof}

\begin{remark}\relabel{monoidal}
It is well-known that by transfer of structure, any equivalence $F:\mathcal C\rightleftarrows \mathcal D:G$ between a monoidal category $\Cc$ and any category $\mathcal D$ induces a monoidal structure on $\mathcal D$ turning $F,G$ into a monoidal equivalence. Hence, using the monoidal structure of ${_H\Mm^{par}}$, one can introduce a monoidal structure on $\TT({_H\Mm})$ (and on $\DD({_H\Mm})$) such that the restriction functor becomes strictly monoidal. 
Explicitly, for two objects $(M,T_M )$ and $(N,T_N)$ in $\mathbb{T}({}_H \mathcal{M})$, we define
\[
(M,T_M )\bullet (N,T_N )=(M\bullet N, T_{M,N}),
\]
in which
\[
((M\bullet N , T_{M,N}), \varphi_{M,N}) =\left( \left( \overline{T_M (M) \otimes_{A_{par}} T_N (N)}, T_{\pi_{T_M} \otimes \pi_{T_N}} \right) ,\varphi_{T(M)\otimes_{A_{par}} T(N)} \right)
\]
is the standard globalization of the partial $H$-module 
\[
\left( T_M (M) \otimes_{A_{par}} T_N (N) , \pi_{T_M} \otimes \pi_{T_N} \right) \in {}_H \mathcal{M}^{par} .
\]
The monoidal unit is given by the standard globalization $((\overline{{A_{par}}}, T_\alpha ), \varphi_{A_{par}} )$ of the unit object $({A_{par}}, \alpha ) \in {}_H \mathcal{M}^{par}$. 
\end{remark}

In the next proposition we show that global modules can be characterized among partial modules by means of their dilation. 

\begin{prop} \label{globaliso} 
Let $(M, \pi )$ be a partial $H$-module and $((\overline{M}, T_{\pi}), \varphi )$ its standard dilation. Then the following statements are equivalent.
\begin{enumerate}[(i)]
\item $M$ is a (global) $H$-module, that is, $\pi :H \rightarrow \mbox{End}_k (M)$ is a morphism of 
algebras;
\item $\varphi$ has a right inverse partial $H$-module morphism $\ol\varphi:\ol M\to M$;
\item $\varphi$ is bijective. 
\end{enumerate}
In this case $\varphi:M\to \ol M$ is an isomorphism of $H$-modules and  $T_{\pi} =\mbox{Id}_{\overline{M}}$.
\end{prop}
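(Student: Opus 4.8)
The plan is to prove the cycle of implications $(i)\Rightarrow(iii)\Rightarrow(ii)\Rightarrow(i)$, exploiting the explicit description of the standard dilation from \thref{dilation}. Recall that $\ol M=H\triangleright\varphi(M)$ inside $\Hom_k(H,M)$, that $\varphi(m)(h)=\pi(h)(m)$, and that $T_\pi(f)=\varphi(f(1_H))$; we already know from \thref{dilation} that $\varphi$ is injective. So the content is entirely about surjectivity of $\varphi$ and its relation to $\pi$ being an algebra map.

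For $(i)\Rightarrow(iii)$, I would assume $\pi$ is an algebra morphism, i.e.\ $\pi(hk)=\pi(h)\pi(k)$ for all $h,k$. The key computation is that for any $m\in M$ and $h,k\in H$ one has $\varphi(\pi(h)(m))(k)=\pi(k)\pi(h)(m)=\pi(kh)(m)=\varphi(m)(kh)=(h\triangleright\varphi(m))(k)$, so that $\varphi(\pi(h)(m))=h\triangleright\varphi(m)$. This shows that every generator $h\triangleright\varphi(m)$ of $\ol M$ already lies in $\varphi(M)$, whence $\ol M=\varphi(M)$ and $\varphi$ is surjective; combined with injectivity it is bijective. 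I expect this to be the crux of the whole proof, since it is precisely the algebra-morphism hypothesis that collapses the generators of $\ol M$ back into the image of $\varphi$.

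The implication $(iii)\Rightarrow(ii)$ is immediate: if $\varphi$ is bijective its inverse $\ol\varphi:=\varphi^{-1}$ is a two-sided inverse, and since $\varphi$ is a partial $H$-module morphism (established in \thref{dilation}) so is its inverse; in particular $\ol\varphi$ is a right inverse. For $(ii)\Rightarrow(i)$ I would assume $\ol\varphi\circ\varphi=\id_M$ with $\ol\varphi$ a partial $H$-module morphism, meaning $\ol\varphi\circ\pi_{T_\pi}(h)=\pi(h)\circ\ol\varphi$ for all $h$, where $\pi_{T_\pi}(h)=T_\pi(h\triangleright-)$. The goal is to derive $\pi(hk)=\pi(h)\pi(k)$. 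I would compute $\pi(h)\pi(k)(m)=\ol\varphi\varphi(\pi(h)\pi(k)(m))$ and rewrite the inner term, using $\varphi(\pi(k)(m))(\ell)=\pi(\ell k)(m)$, to recognize it as $\ol\varphi$ applied to an appropriate element $h\triangleright\varphi(\pi(k)(m))$ of $\ol M$; applying the morphism property of $\ol\varphi$ then transports the $h$ across and yields $\pi(hk)(m)$. The mild obstacle here is bookkeeping with the $H$-action $(h\triangleright f)(k)=f(kh)$ and the evaluation-at-$1_H$ in $T_\pi$, but no new idea is needed.

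Finally, for the closing assertion that $T_\pi=\Id_{\ol M}$ in this case, I observe that under $(iii)$ we have $\ol M=\varphi(M)$, and $T_\pi$ restricted to $\varphi(M)$ is the identity since $T_\pi(\varphi(m))=\varphi(\pi(1_H)(m))=\varphi(m)$ by (PR1); as $\ol M=\varphi(M)$ this gives $T_\pi=\Id_{\ol M}$, and the bijectivity of the partial-module morphism $\varphi$ upgrades it to an isomorphism of $H$-modules once one checks $H$-linearity, which follows from $\varphi(\pi(h)(m))=h\triangleright\varphi(m)$ proved above.
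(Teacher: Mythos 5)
Your overall architecture (the cycle $(i)\Rightarrow(iii)\Rightarrow(ii)\Rightarrow(i)$, argued by direct computation inside $\Hom_k(H,M)$) differs from the paper's: there, $(ii)\Leftrightarrow(iii)$ follows at once from injectivity of $\varphi$, $(ii)\Rightarrow(i)$ from the general fact (\exref{trivialmod}) that a partial-module epimorphic image of a global module is global, and $(i)\Rightarrow(iii)$ together with $T_\pi=\mathrm{id}_{\ol M}$ in one stroke by applying the universal property of the standard dilation (\prref{uniquenessdilation}) to the trivial dilation $((M,\mathrm{id}),\mathrm{id})$ of a global module. Your $(i)\Rightarrow(iii)$ is correct and is essentially the computational shadow of that universal-property argument: when $\pi$ is multiplicative the identity $\varphi(\pi(h)(m))=h\triangleright\varphi(m)$ does collapse $\ol M=H\triangleright\varphi(M)$ onto $\varphi(M)$, and it also yields $H$-linearity of $\varphi$ and $T_\pi=\mathrm{id}_{\ol M}$ as you indicate. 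The step $(iii)\Rightarrow(ii)$ is fine.

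Your sketch of $(ii)\Rightarrow(i)$, however, contains a circular step. The identity you invoke, $\varphi(\pi(k)(m))(\ell)=\pi(\ell k)(m)$, is not the definition of $\varphi$: by definition $\varphi(\pi(k)(m))(\ell)=\pi(\ell)\big(\pi(k)(m)\big)$, and equating this with $\pi(\ell k)(m)$ is exactly the multiplicativity of $\pi$ you are trying to establish. In particular $h\triangleright\varphi(\pi(k)(m))$ does not simplify to $(hk)\triangleright\varphi(m)$ without assuming the conclusion. The repair is to route the computation through $k\triangleright\varphi(m)$ rather than $\varphi(\pi(k)(m))$: from the morphism property $\ol\varphi(k\triangleright f)=\pi(k)(\ol\varphi(f))$ and $\ol\varphi\circ\varphi=\mathrm{id}_M$ one gets $\ol\varphi(k\triangleright\varphi(m))=\pi(k)(m)$, and hence
$$\pi(h)\pi(k)(m)=\pi(h)\big(\ol\varphi(k\triangleright\varphi(m))\big)=\ol\varphi\big(h\triangleright(k\triangleright\varphi(m))\big)=\ol\varphi\big((hk)\triangleright\varphi(m)\big)=\pi(hk)(m),$$
using only that $\triangleright$ is a genuine $H$-module structure on $\ol M$. (This is just an unwinding of the paper's one-line appeal to \exref{trivialmod}.) Note finally that the paper reads ``right inverse'' as $\varphi\circ\ol\varphi=\mathrm{id}_{\ol M}$, which makes $(ii)\Leftrightarrow(iii)$ immediate from injectivity of $\varphi$; your reading $\ol\varphi\circ\varphi=\mathrm{id}_M$ is formally weaker but still closes the cycle once $(ii)\Rightarrow(i)$ is repaired as above.
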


\begin{proof} 
\ul{$(ii)\Leftrightarrow(iii)$}.
Since we $\varphi$ is injective by construction, $\varphi$ is bijective if and only if it has a right inverse. Clearly, if $\varphi$ is bijective, its inverse will also be $H$-linear. 

\ul{$(ii)\Rightarrow(i)$}. Since $\ol M$ is a global $H$-module and $\ol\varphi:\ol M\to M$ is morphism of partial modules, it follows that $M$ is a global $H$-module (see \exref{trivialmod}).

\ul{$(i)\Rightarrow(iii)$}. If $M$ is a global module, then $((M,id),id)$ is a dilation of $M$ which is clearly minimal. Hence by \prref{uniquenessdilation} we find that $\varphi=\Phi:M\to \ol M$ is an isomorphism of $H$-modules and it satisfies $T_\pi\circ \varphi=\varphi$, hence $T_\pi=id$.
\end{proof}

We finish this section by illustrating our results by means of the examples $(k\ZZ_2)^*$ and $H_4$.

\begin{exmp} We consider the Hopf algebra $H=(k\mathbb{Z}_2)^*$ and use the same notation as in \exref{dualC2} and \exref{dualC2_2}; in particular, write $\{p_0,p_1\}$ for the canonical base of $H$, consisting of orthogonal idempotents. Recall that a partial $H$-module (i.e.\ a partially $\ZZ_2$-graded vector space) is a vector space that allows a direct sum decomposition 
\[
\mathbb{V}=\mathbb{V}_0 \oplus \mathbb{V}_1 \oplus \mathbb{V}_{\frac{1}{2}} ,
\]
where the partial representation $\pi:H\to \End(\VV)$ under this ordered direct sum decomposition is presented by the block matrices
\[
\pi(p_0) =\left( \begin{array}{ccc} 1 & 0 & 0 \\ 0 & 0 & 0 \\ 0 & 0 & \frac{1}{2} \end{array} \right) ,
\quad
\pi(p_1) =\left( \begin{array}{ccc} 0 & 0 & 0 \\ 0 & 1 & 0 \\ 0 & 0 & \frac{1}{2} \end{array} \right).
\]
Recall that $H=kp_0\oplus kp_1$. Since $\Hom_k(kp_i,V_j)\cong V_j$, we can identify $\Hom_k (H,\VV) $ with the vector space of $3\times 2$ block matrices, where an $(i,j)$ block with $i \in \{0,1\}$ and $j \in \{0,1,1/2\}$ corresponds to $\Hom(kp_i,V_j)\cong V_j$. The linear map $\varphi:\VV\to \Hom_k(H,\VV),\ \varphi(v)(h)=\pi(h)(v)$ is understood as
\[
\varphi (v_0 )=\left( \begin{array}{cc} v_0 & 0 \\ 0 & 0 \\ 0 & 0 \end{array} \right) ; \;
\varphi (v_1 )=\left( \begin{array}{cc} 0 & 0 \\ 0 & v_1 \\ 0 & 0 \end{array} \right) ; \;
\varphi (v_{1/2} )=\left( \begin{array}{cc} 0 & 0 \\ 0 & 0 \\ \frac{1}{2}v_{1/2} & \frac{1}{2}v_{1/2}  \end{array} \right) ,
\]
for any $v_0\in\VV_0$, $v_1\in \VV_1$ and $v_{1/2}\in\VV_{1/2}$. 
In order to determine the dilated $H$-module $\overline{\mathbb{V}}$, we have to determine the values of $p_i \triangleright \varphi (v_j)$, for $i=0,1$ and $j=0,1,1/2$:
\[
\begin{array}{ccc}
p_0 \triangleright \varphi (v_0 )= \left( \begin{array}{cc} v_0 & 0 \\ 0 & 0 \\ 0 & 0 \end{array} \right)  ; & p_0 \triangleright \varphi (v_1 ) = 
\left( \begin{array}{cc} 0 & 0 \\ 0 & 0 \\ 0 & 0 \end{array} \right)  ; & p_0 \triangleright \varphi (v_{1/2} ) = 
\left( \begin{array}{cc} 0 & 0 \\ 0 & 0 \\ \frac{1}{2}v_{1/2} & 0 \end{array} \right) ; \\
 \, & \, & \, \\
p_1 \triangleright \varphi (v_0 )= \left( \begin{array}{cc} 0 & 0 \\ 0 & 0 \\ 0 & 0 \end{array} \right)   ; & p_1 \triangleright \varphi (v_1 ) = 
\left( \begin{array}{cc} 0 & 0 \\ 0 & v_1 \\ 0 & 0 \end{array} \right)  ; & p_1 \triangleright \varphi (v_{1/2} ) = 
\left( \begin{array}{cc} 0 & 0 \\ 0 & 0 \\ 0 & \frac{1}{2}v_{1/2} \end{array} \right) .
\end{array}
\]
Then, the dilated $H$-module is a subspace of the space of $3\times 2$ block matrices generated by the elements $p_0 \triangleright \varphi (v_0 ), p_0 \triangleright \varphi (v_{1/2} ), p_1 \triangleright \varphi (v_1 )$ and $p_1 \triangleright \varphi (v_{1/2} )$, i.e.
$$\ol\VV = \left(\begin{array}{cc} \VV_0 & 0\\ 0 & \VV_1 \\ \VV_{1/2} & \VV_{1/2}  \end{array}\right)\cong (\VV_0\oplus \VV_{1/2})\oplus (\VV_0\oplus \VV_{1/2}) , $$
which is a $\ZZ_2$-graded vector space by $\ol\VV_0=\VV_0\oplus \VV_{1/2}$ and $\ol\VV_1=\VV_1\oplus \VV_{1/2}$, i.e. the partial $1/2$-degree space is doubled and one copy is placed in each degree. Finally,
for any 
$$\ol v=\left(\begin{array}{cc}v_0 & 0\\ 0 & v_1 \\ v_{1/2} & v'_{1/2}\end{array}\right)\in \ol \VV$$ 
we find that 
$\ol v(1_H)=\ol v(p_0+p_1)=v_0 + v_1 + (v_{1/2} + v'_{1/2})\in \VV$
and therefore 
$$\ol T(\ol v)=\varphi(\ol v(1_H))=
\left(\begin{array}{cc}v_0 & 0\\ 0 & v_1 \\ {1\over 2}(v_{1/2} + v'_{1/2}) & {1\over 2}(v_{1/2} + v'_{1/2})\end{array}\right).$$
Hence the projection map $\ol T:\ol\VV\to \ol\VV$ on the ordered direct sum decomposition $\ol\VV=(\VV_0\oplus \VV_{1/2})\oplus (\VV_0\oplus \VV_{1/2})$ is then described by a block matrix
\[
\overline{T} =\left( \begin{array}{cccc} 1 & 0 & 0 & 0 \\ 0 & 1\over 2 & 0 & 1\over 2 \\
0 & 0 & 1 & 0 \\ 0 & 1\over 2 & 0 & 1\over 2 \end{array} \right) .
\]
Then the projection $\overline{T}$ coincides with the projection of the Example 
\ref{Z2grading}.
\end{exmp}

\begin{exmp} Consider the Sweedler Hopf algebra $H_4$ and consider a generic $n$-dimensional pure partial $H_4$-module $W$, as described in \exref{exampleSweedler}. I.e. we have matrices $c,d\in\End(W)$ satisfying $cd=dc$ and $c^2=d^2$ which define a partial $H_4$-module by means of the action
$$\pi(g)(w)=0, \quad \pi(x)(w)=cw, \quad \pi(y)(w)=dw.$$
To construct the dilation of $W$, consider $\Hom_k(H_4,W)\cong W^4$, using the canonical base $\{1,g,x,y=gx\}$ for $H_4$. Then we find that
$$\varphi:W\to \Hom_k(H_4,W), \varphi(w)=(w,0,cw,dw),$$
and $\ol{W}=H\triangleright \varphi(W)$ is spanned by the elements of the form
\begin{eqnarray*}
\varphi(w)&=&(w,0,cw,dw), \\
g\triangleright\varphi(w)&=&(0,w,-dw,-cw), \\
x\triangleright\varphi(w)&=&(cw,dw,0,0), \\
y\triangleright\varphi(w)&=&(dw,cw,0,0).
\end{eqnarray*}
Then obviously $x\triangleright\varphi(w)$ and $y\triangleright\varphi(w)$ are linear combinations of elements of the form $\varphi(w)$ and $g\triangleright\varphi(w)$.  Hence we obtain a $k$-linear isomorphism $\alpha:\ol W \to W\oplus W$, given by $\alpha(\varphi(w))=(w,0)$ and $\alpha(g\triangleright \varphi(w))=(0,w)$. By transfer of structure, we find that $W\oplus W$ can be endowed with a (global) $H_4$-action is then given by
$$g\triangleright (w,w')=(w',w),\quad x\triangleright (w,w')=(cw-dw',dw-cw'),$$
such that the map $\alpha$ above becomes an isomorphism of (global) $H_4$-modules.
In other words, we obtain exactly a global $H_4$-module of even dimension as described in \exref{Sweedler2}, whose restricted partial action is the pure submodule $W$ we started with.
\end{exmp}

\section{Dilations and globalizations}

\subsection{Exactness of the dilation functor}

\begin{lemma}
Let $H$ be a Hopf algebra.
\begin{enumerate}[(i)]
\item $\ol{H_{par}}$ is an $H$-$H_{par}$ bimodule via the actions 
$$h\triangleright (h_i\triangleright \varphi(x_i))\triangleleft y=hh_i\triangleright \varphi(x_iy),$$ 
for all $h,h_i\in H$ and $x_i,y\in H_{par}$;
\item Let $M$ be a partial $H$-module, then by part (i) $\ol{H_{par}}\ot_{H_{par}}M$ inherits a left $H$-module structure from $\ol{H_{par}}$. Define
\begin{eqnarray*}
T:\ol{H_{par}}\ot_{H_{par}}M\to \ol{H_{par}}\ot_{H_{par}}M,&& T(f\ot m)=\varphi(1)\ot f(1)\triangleright m,\\ 
\phi:M\to \ol{H_{par}}\ot_{H_{par}}M,&& \phi(m)=\varphi(1)\ot m.
\end{eqnarray*} 
Then $((\ol{H_{par}}\ot_{H_{par}}M,T),\phi)$ is a proper dilation of $M$. We call this the {\em secondary dilation} of $M$.
\end{enumerate}
\end{lemma}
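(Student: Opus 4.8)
The plan is to treat the two parts separately: part (i) furnishes the $H$-$H_{par}$-bimodule that makes the tensor product in part (ii) meaningful, and part (ii) then reduces — after one structural observation — to facts already known for the standard dilation of $H_{par}$ itself.

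\textbf{Part (i).} Recall that $\ol{H_{par}}=H\triangleright \varphi(H_{par})\subseteq \Hom_k(H,H_{par})$, where the canonical partial representation of $H$ on $H_{par}$ is $\pi(h)(x)=[h]x$, so that $\varphi(x)(k)=[k]x$ and, by \equref{eq1}, a generic element $f=\sum_i h_i\triangleright\varphi(x_i)$ satisfies $f(k)=\sum_i[kh_i]x_i$. The key observation is that the right action in the statement is nothing but postcomposition with right multiplication by $y$ in $H_{par}$: evaluating the defining formula gives
\[
\big((h_i\triangleright\varphi(x_i))\triangleleft y\big)(k)=\varphi(x_iy)(kh_i)=[kh_i]x_iy,
\]
so that $(f\triangleleft y)(k)=f(k)y$, an expression manifestly independent of the chosen presentation of $f$ as a sum of generators. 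This makes well-definedness transparent, and the generator formula shows $f\triangleleft y$ again lies in $\ol{H_{par}}$. Associativity and unitality of $\triangleleft$ follow from those of the product of $H_{par}$, and the bimodule compatibility is immediate: the left action is a shift in the argument, $(h\triangleright f)(k)=f(kh)$, while the right action is applied to the output, so both $\big((h\triangleright f)\triangleleft y\big)(k)$ and $\big(h\triangleright(f\triangleleft y)\big)(k)$ evaluate to $f(kh)y$.

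\textbf{Part (ii).} By part (i) the space $\ol{H_{par}}\ot_{H_{par}}M$ is a left $H$-module via $h\triangleright(f\ot m)=(h\triangleright f)\ot m$. The crux of the whole lemma is the identification
\[
T(f\ot m)=\varphi(1)\ot f(1)\triangleright m=(\varphi(1)\triangleleft f(1))\ot m=\varphi(f(1))\ot m=T_\pi(f)\ot m,
\]
where $T_\pi(f)=\varphi(f(1))$ is the standard-dilation projection on $\ol{H_{par}}$; here I use $\varphi(1)\triangleleft f(1)=\varphi(f(1))$ from part (i). Thus $T=T_\pi\ot_{H_{par}}\mathrm{id}_M$, and the same line shows $T$ is $H_{par}$-balanced since $(f\triangleleft y)(1)\triangleright m=(f(1)y)\triangleright m=f(1)\triangleright(y\triangleright m)$. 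Because the $H$-action only affects the $\ol{H_{par}}$-tensorand, one gets $T_h=(T_\pi)_h\ot_{H_{par}}\mathrm{id}_M$ for every $h$, so the $c$-condition $T_h\circ T=T\circ T_h$ collapses to $(T_\pi)_h\circ T_\pi=T_\pi\circ(T_\pi)_h$, which holds since $(\ol{H_{par}},T_\pi)\in\TT({}_H\Mm)$ by \thref{dilation}; likewise $T^2=T$ follows from $T_\pi^2=T_\pi$. This places $(\ol{H_{par}}\ot_{H_{par}}M,T)$ in $\TT({}_H\Mm)$ with no genuine computation.

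It remains to identify $M$ with $R(\ol{H_{par}}\ot_{H_{par}}M,T)$ and to prove properness. I would introduce $\varpi:\ol{H_{par}}\ot_{H_{par}}M\to M$, $\varpi(f\ot m)=f(1)\triangleright m$, which is $H_{par}$-balanced by associativity of the $H_{par}$-action; then $\varpi\circ\phi=\mathrm{id}_M$ (using $\varphi(1)(1)=[1_H]=1_{H_{par}}$) and $T=\phi\circ\varpi$, so $\phi$ is injective with image exactly $TN$. For the partial-module compatibility, $T(h\triangleright\phi(m))=T_\pi(h\triangleright\varphi(1))\ot m=\varphi([h])\ot m=\varphi(1)\ot[h]\triangleright m=\phi([h]\triangleright m)$, so $\phi$ intertwines the two partial representations and, being bijective onto $TN$, is an isomorphism of partial $H$-modules. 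For properness, writing $f=\sum_i h_i\triangleright\varphi(x_i)$ and using $\varphi(x_i)\ot m=(\varphi(1)\triangleleft x_i)\ot m=\varphi(1)\ot x_i\triangleright m=\phi(x_i\triangleright m)$, I obtain $f\ot m=\sum_i h_i\triangleright\phi(x_i\triangleright m)\in H\triangleright\phi(M)$, whence $\ol{H_{par}}\ot_{H_{par}}M=H\triangleright TN$.

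\textbf{Main obstacle.} The only genuine subtlety is the bookkeeping of well-definedness over $H_{par}$ for the maps $\triangleleft$, $T$ and $\varpi$, which in each case reduces to associativity of the $H_{par}$-action (equivalently, to the intrinsic formulas $(f\triangleleft y)(k)=f(k)y$ and $\varpi(f\ot m)=f(1)\triangleright m$). Once the structural identity $T=T_\pi\ot_{H_{par}}\mathrm{id}_M$ is in place, the $c$-condition — which would be a long direct verification — becomes automatic. Finally, it is worth recording the natural $H$-linear surjection $\Psi:\ol{H_{par}}\ot_{H_{par}}M\to\ol M$, $\Psi(f\ot m)(k)=f(k)\triangleright m$, which exhibits the standard (minimal) dilation as a quotient of the secondary one in the sense of \prref{uniquenessdilation}; the injectivity of $\Psi$ is exactly the exactness question addressed in the next subsection.
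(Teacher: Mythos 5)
Your proposal is correct, and all the individual verifications check out, but it takes a genuinely different route at the two places where the paper actually works. For part (i) the paper does no computation at all: it observes that right multiplication by $y$ is an endomorphism of $H_{par}$ in ${}_H\Mm^{par}={}_{H_{par}}\Mm$ and invokes additivity and functoriality of the dilation functor $D$ from \prref{propdilationfunctor} to transport the right $H_{par}$-action to $\ol{H_{par}}$; your explicit identity $(f\triangleleft y)(k)=f(k)y$ proves the same thing from scratch and has the advantage of making well-definedness and the bimodule compatibility visibly tautological. For part (ii), the paper verifies the $c$-condition by a direct two-line computation of $T_h\circ T$ and $T\circ T_h$ on $f\ot m$, hinging on the identity $\varphi([h_{(1)}][S(h_{(2)})])=h_{(1)}\triangleright\varphi([S(h_{(2)})])$ in $\ol{H_{par}}$, whereas you factor the whole projection as $T=T_\pi\ot_{H_{par}}\mathrm{id}_M$ (after checking $T_\pi$ and each $(T_\pi)_h$ are right $H_{par}$-linear) and let the $c$-condition collapse onto the already-established one for the standard dilation of $H_{par}$ from \thref{dilation}. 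This is a clean structural shortcut that the paper does not exploit, and it also gives $T^2=T$ and the balancedness of $T$ for free; the price is that you must justify the intermediate identities $\varphi(1)\triangleleft f(1)=\varphi(f(1))$ and the right-linearity claims, which you do. The remaining verifications (injectivity of $\phi$ via the retraction $\varpi$, the intertwining $\phi(h\cdot m)=T(h\triangleright\phi(m))$, and properness via $\varphi(x_i)\ot m=\phi(x_i\triangleright m)$) coincide with the paper's, up to your slightly more systematic use of $\varpi$ with $T=\phi\circ\varpi$ and $\varpi\circ\phi=\mathrm{id}_M$, which matches the quadruple description of dilations given in the Remarks after the definition. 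Your closing observation about the surjection onto $\ol M$ correctly anticipates \prref{uniquenessdilation} and the exactness discussion that follows in the paper.
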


\begin{proof}
\ul{(i)}.
Since we know that the dilation functor $D:{_H\Mm^{par}}\to {_H\Mm}$ is additive (see \prref{propdilationfunctor}) and ${_H\Mm^{par}}={_{H_{par}}\Mm}$, it follows that $D(H_{par})=\ol{H_{par}}$ is an $H$-$H_{par}$ bimodule with the given structure.\\
\ul{(ii)}.
First remark that for any $f=h_i\triangleright \varphi(x_i)\in \ol{H_{par}}$ we have that $f(1)=h_i\cdot x_i=[h_i]x_i$. Hence, for all $k\in H$ we have 
$$\varphi([h_{(1)}][S(h_{(2)})])(k)=[k][h_{(1)}][S(h_{(2)})]=[kh_{(1)}][S(h_{(2)})]
=(h_{(1)}\triangleright \varphi([S(h_{(2)})]))(k),$$
i.e. $\varphi([h_{(1)}][S(h_{(2)})])=h_{(1)}\triangleright \varphi([S(h_{(2)})])$. 
Let us check that $T$ satisfies the $c$-condition.
\begin{eqnarray*}
h_{(1)}\triangleright T\big(S(h_{(2)})\triangleright T(f\ot_{H_{par}} m)\big)&=&
h_{(1)}\triangleright T\big(S(h_{(2)})\triangleright \varphi(1)\ot_{H_{par}} f(1)\triangleright m\big)\\
&=& h_{(1)}\triangleright \varphi(1)\ot_{H_{par}}  [S(h_{(2)})]f(1)\triangleright m\\
&=& h_{(1)}\triangleright \varphi([S(h_{(2)})])\ot_{H_{par}} f(1)\triangleright m ,\\
T\big(h_{(1)}\triangleright T (S(h_{(2)})\triangleright f\ot_{H_{par}} m\big)
&=& T\big(h_{(1)}\triangleright \varphi(1)\ot_{H_{par}} [S(h_{(2)})]f(1)\triangleright m\big)\\
&=& \varphi(1)\ot_{H_{par}} [h_{(1)}][S(h_{(2)})]f(1)\triangleright m\\
&=& \varphi([h_{(1)}][S(h_{(2)})])\ot_{H_{par}} f(1)\triangleright m .
\end{eqnarray*}
Clearly, the image of $\phi$ equals the image of $T$ and $\phi$ is injective. Let us check that $\phi$ is a morphism of partial modules. 
\begin{eqnarray*}
\phi(h\cdot m)&=&\varphi(1)\ot h\cdot m = \varphi(1)\ot [h]m\\
&=&T(h\triangleright \varphi(1)\ot m)=T(h\triangleright \phi(m))\\
&=& h\cdot \phi(m).
\end{eqnarray*}
Hence $\phi$ induces an isomorphism of partial modules $\phi:M\to T(\ol{H_{par}}\ot_{H_{par}}M)$. 
Finally, let us verify that $\ol{H_{par}}\ot_{H_{par}}M$ is generated by the image of $\phi$ as left $H$-module.
$$ \sum_i h_i\triangleright \varphi(x_i)\ot_{H_{par}} m = \sum_i  h_i\triangleright\varphi(1)\ot_{H_{par}} x_im= \sum_i  h_i\triangleright\phi(x_im).$$
We can conclude that the secondary dilation is well-defined and proper.
\end{proof}

\begin{proposition}
Let $H$ be a Hopf algebra. The following statements are equivalent.
\begin{enumerate}
\item the dilation functor is exact;
\item the dilation functor preserves kernels;
\item the dilation functor has a right adjoint;
\item the dilation functor is naturally isomorphic to the functor $\ol{H_{par}}\ot_{H_{par}}-$;
\item for any partial module, the canonical $H$-module morphism between the secondary and standard dilations is bijective;
\item for any partial module, the secondary dilation is minimal.
\end{enumerate}
Under these equivalent conditions, $\ol{H_{par}}$ is flat as right $H_{par}$-module.
\end{proposition}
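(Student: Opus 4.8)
The plan is to organize the six conditions into two blocks, $\{(1),(2),(3)\}$ (exactness-type) and $\{(4),(5),(6)\}$ (comparison-type), and to bridge them through the natural transformation relating the two dilation constructions. For every partial module $M$ the secondary dilation $\ol{H_{par}}\ot_{H_{par}}M$ is a proper dilation by the preceding lemma, so \prref{uniquenessdilation}(i) yields a canonical surjective $H$-linear map $\Phi_M:\ol{H_{par}}\ot_{H_{par}}M\to \ol M=D(M)$; a routine check (using the explicit formula from that proof together with \equref{Df}) shows these assemble into a natural epimorphism $\Phi:G\Rightarrow D$, where $G:=\ol{H_{par}}\ot_{H_{par}}-$. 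I would first settle the two cheap internal equivalences. For $(5)\Leftrightarrow(6)$ I apply \prref{uniquenessdilation}(ii) to the proper dilation $\ol{H_{par}}\ot_{H_{par}}M$: since $\Phi_M$ is always surjective, it is bijective exactly when that dilation is minimal. For $(1)\Leftrightarrow(2)$ I use \prref{propdilationfunctor}, by which $D$ always preserves monomorphisms and epimorphisms: an additive functor that preserves kernels (i.e.\ is left exact) and preserves epimorphisms sends short exact sequences to short exact sequences, hence is exact, and the reverse implication is trivial.

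Next I would close a cycle through the remaining statements. For $(5)\Rightarrow(4)$ it suffices that a natural epimorphism whose components are all bijective is a natural isomorphism, giving $D\cong G$. For $(4)\Rightarrow(3)$ I invoke the tensor--hom adjunction: the functor $\ol{H_{par}}\ot_{H_{par}}-$ has right adjoint ${}_H\Hom(\ol{H_{par}},-)$, so any functor isomorphic to it has a right adjoint. For $(3)\Rightarrow(1)$, a functor admitting a right adjoint preserves all colimits, in particular cokernels, so $D$ is right exact; combined with the preservation of monomorphisms from \prref{propdilationfunctor}, a short exact sequence $0\to A\to B\to C\to 0$ is sent to a short exact sequence, so $D$ is exact. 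Together with $(1)\Leftrightarrow(2)$ this leaves only the link $(1)\Rightarrow(5)$ to be supplied.

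The hard part will be $(1)\Rightarrow(5)$: upgrading the natural epimorphism $\Phi$ to an isomorphism from exactness of $D$ alone. My approach is a presentation argument. Both $G$ (a tensor functor) and $D$ (right exact by $(1)$) are right exact and preserve coproducts, the latter by \prref{propdilationfunctor}. I would first verify that $\Phi_{H_{par}}$ is bijective: the secondary dilation of $H_{par}$ is $\ol{H_{par}}\ot_{H_{par}}H_{par}\cong\ol{H_{par}}$, which is the standard dilation of $H_{par}$ and hence minimal by \thref{dilation}, so $\Phi_{H_{par}}$ is bijective by \prref{uniquenessdilation}(ii); naturality of $\Phi$ together with coproduct-preservation then makes $\Phi$ an isomorphism on every free $H_{par}$-module. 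For general $M$, choosing a free presentation $F_1\to F_0\to M\to 0$ and applying the two right exact functors produces a commutative ladder with exact rows whose two left vertical arrows are isomorphisms, and the five lemma forces $\Phi_M$ to be an isomorphism. This completes the equivalence of all six conditions. Finally, the flatness assertion is immediate from $(4)$: under these conditions $D\cong\ol{H_{par}}\ot_{H_{par}}-$, and since $D$ preserves monomorphisms by \prref{propdilationfunctor}, so does this tensor functor, which is precisely the statement that $\ol{H_{par}}$ is flat as a right $H_{par}$-module.
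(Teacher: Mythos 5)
Your proposal is correct and follows essentially the same route as the paper: the paper disposes of the equivalence of (1)--(4) by citing the Eilenberg--Watts theorem together with the preservation properties of the dilation functor, and of (4)--(6) by the uniqueness property of minimal proper dilations, and your argument is precisely a self-contained unwinding of that citation (the free-presentation/five-lemma step being the proof of the relevant instance of Eilenberg--Watts, with the comparison map identified as the canonical surjection onto the standard dilation). If anything, your version is slightly more careful, since routing $(4)\Rightarrow(5)$ through $(4)\Rightarrow(3)\Rightarrow(1)\Rightarrow(5)$ avoids having to argue that an abstract natural isomorphism in (4) agrees with the canonical comparison map.
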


\begin{proof}
The equivalence of the first four items follows form the Eilenberg-Watts theorem allong with the properties proven in \prref{propdilationfunctor}. 
The equivalence between the three last items follow from \prref{uniquenessdilation}. The last statement follows from the fact that under these equivalent conditions, the functor $\ol{H_{par}}\ot_{H_{par}}-$ is exact.
\end{proof}

\begin{remark}
Unfortunately, we were not able to prove whether the equivalent conditions in the previous proposition are satisfied for any Hopf algebra $H$. Nevertheless, we also didn't find a counter example. It follows from the explicit description of dilations given above of both the two-dimensional Hopf algebra $(k\Z_2)^*$ and Sweedler's four dimensional Hopf algebra $H_4$, that in these examples the dilation functor preserves kernels and hence is exact.

In case the equivalent conditions of the previous proposition are satisfied, it follows that to understand all dilations, it is enough to understand the dilation of the regular partial module $H_{par}$. Since we know moreover that $H_{par}\cong \ul{{A_{par}}\# H}$ (see \seref{globpart}), the previous question reduces furthermore to understand the dilation of a partial smash product. This we will do in the following two subsections.
\end{remark}

\subsection{The dilations of a partial smash product}

Recall that $B$ is an algebra in the monoidal category of partial $H$-modules if and only if the partial $H$-module structure $H\ot B\to B$ defines a partial action of $H$ on $B$. 
We know from \cite{AB} that given any symmetric partial action of $H$ on $B$, there exists a {\em enveloping action} or {\em globalization}, which consists of a (possibly non-unital) $H$-module algebra $\ol B$, together with an injective multiplicative map $\theta:B\to \ol B$ such that the image $\theta$ is an ideal in $\ol B$ that generates $\ol B$ as an $H$-module and the partial action on $B$ can be obtained by restricting the (global) action on $\ol B$.

The next proposition relates the globalization of a partial action with the dilation of its associated partial $H$-module. Recall that a (non-unital) ring $R$ said to be {\em idempotent} if $RR=R$, i.e. for any $r\in R$ there exist (non-unique) elements $r_i,r'_i\in R$ such that $r=\sum_i r_ir'_i$.

\begin{prop}\prlabel{AB} Let $\cdot : H\otimes B \rightarrow B$ be a symmetric partial action of the Hopf algebra $H$ on a unital algebra $B$ and let $(B, \pi )$ be its 
associated $H$-module structure, given by the partial representation $\pi (h)(a)=h\cdot a$. Consider the standard dilation 
$((\overline{B} ,T_{\pi}),\varphi )$ of $(B,\pi)$. Then
\begin{enumerate}
\item $\overline{B}$ is an idempotent (but not necessarily unital) $H$-module algebra with the convolution product of $\Hom_k (H,B)$;
\item the map $\varphi :B\rightarrow \overline{B}$ is multiplicative;
\item $\varphi (B)$ is an ideal in $\overline{B}$.
\end{enumerate}
Consequently, $\ol B$ is exactly the globalization of the partial action on $B$. 
\end{prop}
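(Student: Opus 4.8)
The plan is to realize both the abstract dilation $\overline B$ and the enveloping action of \cite{AB} inside one and the same ambient $H$-module algebra, namely $\Hom_k(H,B)$ equipped with the convolution product $(f*g)(h)=f(h_{(1)})g(h_{(2)})$ (which now makes sense since $B$ is an algebra) and the $H$-action $(h\triangleright f)(k)=f(kh)$ already used in \thref{dilation}. The first step is to check that $(\Hom_k(H,B),*,\triangleright)$ is an $H$-module algebra: associativity and the unit $\eta(h)=\varepsilon(h)1_B$ are routine, while the compatibility $h\triangleright(f*g)=(h_{(1)}\triangleright f)*(h_{(2)}\triangleright g)$ follows from multiplicativity of $\Delta$ by a one-line computation. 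This reduces the whole proposition to understanding how the generating subspace $\varphi(B)$ and the submodule $\overline B=H\triangleright\varphi(B)$ sit inside this fixed $H$-module algebra.

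The second, computationally central, step is to evaluate products of the spanning elements. A direct application of (PA2) gives $\varphi(a)*\varphi(b)=\varphi(ab)$, which is precisely statement (2) and shows that $\varphi(B)$ is a subalgebra with unit $\varphi(1_B)$. The technical heart, from which (1) and (3) both follow, is the pair of identities
\begin{eqnarray*}
\varphi(a)*(l\triangleright\varphi(b))&=&\varphi\big(a(l\cdot b)\big),\\
(l\triangleright\varphi(b))*\varphi(a)&=&\varphi\big((l\cdot b)a\big),
\end{eqnarray*}
for all $a,b\in B$, $l\in H$. To prove the first I would evaluate at $k\in H$, obtaining $(k_{(1)}\cdot a)((k_{(2)}l)\cdot b)$, then insert an idempotent leg by (PA2) in the form $(k_{(1)}\cdot a)(k_{(2)}\cdot 1_B)=k_{(1)}\cdot a$; axiom (PA3) collapses $(k_{(2)}\cdot 1_B)((k_{(3)}l)\cdot b)$ into $k_{(2)}\cdot(l\cdot b)$, and a final use of (PA2) yields $k\cdot(a(l\cdot b))=\varphi(a(l\cdot b))(k)$. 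The second identity is symmetric, with (PA3') replacing (PA3). I expect this Sweedler bookkeeping --- choosing where to insert $k\cdot 1_B$ and keeping the coproduct legs aligned --- to be the main obstacle; everything else is formal.

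With these identities in hand the remaining assertions are immediate. Statement (3) holds because $\varphi(a)*(\sum_l l\triangleright\varphi(b_l))=\sum_l\varphi(a(l\cdot b_l))\in\varphi(B)$, and symmetrically on the other side, so $\varphi(B)$ absorbs convolution from both sides and is a two-sided ideal. For (1), the reduction
$$(h\triangleright\varphi(a))*(k\triangleright\varphi(b))=h_{(1)}\triangleright\big(\varphi(a)*(S(h_{(2)})k\triangleright\varphi(b))\big)=h_{(1)}\triangleright\varphi\big(a((S(h_{(2)})k)\cdot b)\big),$$
obtained from the module-algebra axiom together with $h_{(2)}S(h_{(3)})=\varepsilon(h_{(2)})$, shows that $\overline B$ is closed under $*$ and is therefore an $H$-module subalgebra (non-unital in general, since $\varphi(1_B)$ need not fix $h\triangleright\varphi(a)$). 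Idempotency then follows from $h\triangleright\varphi(a)=h\triangleright(\varphi(a)*\varphi(1_B))=(h_{(1)}\triangleright\varphi(a))*(h_{(2)}\triangleright\varphi(1_B))\in\overline B*\overline B$.

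Finally, for the concluding statement I would collect the pieces against the definition of globalization recalled before the proposition: $\overline B$ is a (non-unital) $H$-module algebra by (1); $\varphi$ is injective by \thref{dilation} and multiplicative by (2); $\varphi(B)$ is an ideal by (3) generating $\overline B$ as an $H$-module by the very definition $\overline B=H\triangleright\varphi(B)$; and the original partial action is recovered as $\varphi(h\cdot a)=\varphi(1_B)*(h\triangleright\varphi(a))$, a special case of the first key identity expressing restriction of the global action through the idempotent $\varphi(1_B)$ (one checks likewise that the dilation projection is $T_\pi=\varphi(1_B)*-$, so the two viewpoints coincide). Since the standard dilation is moreover minimal by \thref{dilation}, $\overline B$ satisfies exactly the defining and uniquely determining properties of the enveloping action, so by the uniqueness of the globalization established in \cite{AB} the two objects agree.
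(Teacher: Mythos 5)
Your proof is correct and follows essentially the same route as the paper's: the paper declares that (1)--(3) ``can be easily verified'', records exactly your idempotency computation $h\triangleright\varphi(b)=(h_{(1)}\triangleright\varphi(b))*(h_{(2)}\triangleright\varphi(1))$, and defers the identification with the enveloping action to \cite{AB}. Your key identities $\varphi(a)*(l\triangleright\varphi(b))=\varphi(a(l\cdot b))$ and its symmetric counterpart (via (PA3) and (PA3${}'$)) are precisely the omitted verifications, so you have simply supplied the details the paper leaves implicit.
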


\begin{proof} 
The statements (i)-(iii) can be easily verified. In particular, to see that $\ol B$ is idempotent, take any $\sum_i  h_i\triangleright \varphi(b_i) \in \ol B$. Then 
\begin{eqnarray*}
\sum_i \big((h_{i(1)}\triangleright\varphi(b_i))*(h_{i(2)}\triangleright \varphi(1))\big)(k)
&=& \sum_i  (h_{i(1)}\triangleright\varphi(b_i))(k_{(1)})(h_{i(2)}\triangleright \varphi(1))(k_{(2)})\\
&=& \sum_i  (k_{(1)}h_{i(1)}\cdot b_i)(k_{(2)}h_{i(2)}\cdot 1)= \sum_i  kh_i\cdot b_i\\
&=& \big( \sum_i  h_i\triangleright \varphi(b_i)\big)(k).
\end{eqnarray*}
The remaining results follow by \cite{AB}.
\end{proof}

\begin{lemma}\lelabel{parglobtensor}
If $M$ is a partial $H$-module and $N$ is a global $H$-module then the tensor product $M\ot N$ is a partial $H$-module by means of the diagonal action
$$\cdot: H\ot M\ot N\to M\ot N,\ h\cdot (m\ot n)=h_{(1)}\cdot m\ot h_{(2)}n$$
\end{lemma}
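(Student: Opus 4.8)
The plan is to avoid a direct five-fold verification of the partial representation axioms and instead exploit the dilation machinery developed above. By \thref{dilation}, the partial module $M$ admits a standard dilation $((\ol M, T_\pi), \varphi)$, so that $M\cong R(\ol M, T_\pi) = (T_\pi\ol M, \pi_{T_\pi})$ as partial $H$-modules, with $\pi_{T_\pi}(h)(m) = T_\pi(h\triangleright m)$. Here $\ol M$ is a genuine (global) left $H$-module, being an $H$-submodule of $\Hom_k(H,M)$. Since $\ol M$ and $N$ are both genuine $H$-modules, their tensor product $\ol M\ot N$ is again a genuine $H$-module under the diagonal action $h\triangleright(\ol m\ot n) = h_{(1)}\triangleright \ol m\ot h_{(2)} n$; this is the standard fact that $\Delta$ is an algebra map, so that $(\rho_{\ol M}\ot\rho_N)\circ\Delta$ is multiplicative. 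I would then consider the linear map $T_\pi\ot\mathrm{id}_N$ on $\ol M\ot N$, which is visibly a projection and whose image is $T_\pi\ol M\ot N\cong M\ot N$.

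The crucial step is to show that $T_\pi\ot\mathrm{id}_N$ satisfies the $c$-condition on the global module $\ol M\ot N$. For this I would compute the twisted operator $(T_\pi\ot\mathrm{id}_N)_h = h_{(1)}\triangleright (T_\pi\ot\mathrm{id}_N)(S(h_{(2)})\triangleright -)$ directly on a simple tensor. Expanding the diagonal action and using the antipode rule $\Delta(S(h)) = S(h_{(2)})\ot S(h_{(1)})$, one arrives at an expression of the form $h_{(1)}\triangleright T_\pi(S(h_{(4)})\triangleright \ol m)\ot h_{(2)} S(h_{(3)}) n$. The two middle legs collapse via the antipode-counit identity $\sum h_{(2)} S(h_{(3)}) = \epsilon(\cdot)1$, and after recombining the remaining legs this reduces exactly to $(T_\pi)_h(\ol m)\ot n$. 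In other words, $(T_\pi\ot\mathrm{id}_N)_h = (T_\pi)_h\ot\mathrm{id}_N$. Granting this identity, the $c$-condition is immediate: $(T_\pi\ot\mathrm{id})_h\circ(T_\pi\ot\mathrm{id}) = (T_\pi)_h T_\pi\ot\mathrm{id} = T_\pi (T_\pi)_h\ot\mathrm{id} = (T_\pi\ot\mathrm{id})\circ(T_\pi\ot\mathrm{id})_h$, where the middle equality is precisely the $c$-condition for $T_\pi$ coming from $(\ol M, T_\pi)\in\TT(_H\Mm)$.

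With the $c$-condition established, \prref{projections} applies to the pair $(\ol M\ot N, T_\pi\ot\mathrm{id}_N)$ and produces a partial $H$-module structure on its image $T_\pi\ol M\ot N$, given by $h\mapsto (T_\pi\ot\mathrm{id})(h\triangleright -)$. Evaluating on a simple tensor $m\ot n$ with $m\in T_\pi\ol M$ gives $T_\pi(h_{(1)}\triangleright m)\ot h_{(2)} n = \pi_{T_\pi}(h_{(1)})(m)\ot h_{(2)} n$, which is exactly the diagonal formula of the statement. Finally I would transport this along the partial-module isomorphism $\varphi: M\to T_\pi\ol M$: the map $\varphi\ot\mathrm{id}_N$ intertwines the diagonal action on $M\ot N$ with the one just constructed on $T_\pi\ol M\ot N$ (using that $\varphi$ is a morphism of partial modules), so $M\ot N$ with the stated diagonal action is indeed a partial $H$-module.

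The only genuinely computational point is the leg-collapsing identity $(T_\pi\ot\mathrm{id}_N)_h = (T_\pi)_h\ot\mathrm{id}_N$; everything else is formal. As an alternative one could verify (PR1)--(PR5) by hand for the diagonal map $\Pi(h) = \pi(h_{(1)})\ot\rho(h_{(2)})$, each axiom reducing to the corresponding axiom for $\pi$ (applied with leading variable $h_{(1)}$) after the same antipode-counit collapse on the $N$-tensorand, but the projection argument is shorter and reuses the structure already in place. I expect the main obstacle to be purely the Sweedler-index bookkeeping needed to identify which adjacent legs contract to a counit, rather than any conceptual difficulty.
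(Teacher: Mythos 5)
Your argument is correct, but it takes a genuinely different route from the paper. The paper proves the lemma by brute force: it writes out the compositions $k\cdot(S(h_{(1)})\cdot(h_{(2)}\cdot(m\ot n)))$ and $k\cdot(h_{(1)}\cdot(S(h_{(2)})\cdot(m\ot n)))$ on simple tensors, collapses the adjacent legs acting on the $N$-tensorand via $h_{(2)}S(h_{(3)})=\epsilon(h_{(2)})1$ (using crucially that the action on $N$ is associative), and invokes the corresponding partial-module identities for $M$; it checks two of the four associativity relations explicitly and notes the other two are analogous. You instead reduce to the global case by dilating $M$ to $(\ol M,T_\pi)$, forming the honest $H$-module $\ol M\ot N$, and showing that $T_\pi\ot\mathrm{id}_N$ inherits the $c$-condition via the identity $(T_\pi\ot\mathrm{id}_N)_h=(T_\pi)_h\ot\mathrm{id}_N$ --- which is valid, and whose verification is essentially the same single antipode--counit collapse that the paper performs four times. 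Since the lemma sits in Section 5, after \thref{dilation} and \prref{projections}, there is no circularity, and your identification of the resulting structure on $T_\pi\ol M\ot N$ with the stated diagonal action, transported along $\varphi\ot\mathrm{id}_N$, is sound because conjugation by a bijection preserves the operator identities (PR1)--(PR5). What your approach buys is conceptual economy: one Sweedler computation plus formal manipulation of projections, and it makes transparent \emph{why} the lemma holds (tensoring with a global module is compatible with restriction along a $c$-projection). What the paper's approach buys is self-containedness --- it uses nothing beyond the definitions and would survive verbatim in Section 2 --- at the cost of repeating the same index bookkeeping for each axiom.
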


\begin{proof}
For all $m\ot n\in M$ and $h,k\in H$, we find
\begin{eqnarray*}
k\cdot (S(h_{(1)})\cdot (h_{(2)}\cdot (m\ot n)))&=& k_{(1)}\cdot (S(h_{(2)})\cdot (h_{(3)}\cdot m))\ot k_{(2)}S(h_{(1)})h_{(4)}n\\
&=& k_{(1)}S(h_{(2)})\cdot (h_{(3)}\cdot m)\ot k_{(2)}S(h_{(1)})h_{(4)}n\\
&=& kS(h_{(1)})\cdot (h_{(2)}\cdot (m\ot n))
\end{eqnarray*}
and
\begin{eqnarray*}
k\cdot (h_{(1)}\cdot (S(h_{(2)})\cdot (m\ot n)))&=& k_{(1)}\cdot (h_{(1)}\cdot (S(h_{(4)})\cdot m))\ot k_{(2)}h_{(2)}S(h_{(3)})n\\
&=& k_{(1)}\cdot (h_{(1)}\cdot (S(h_{(2)})\cdot m))\ot k_{(2)}n\\
&=& k_{(1)}h_{(1)}\cdot (S(h_{(2)})\cdot m)\ot k_{(2)}n\\
&=& k_{(1)}h_{(1)}\cdot (S(h_{(4)})\cdot m)\ot k_{(2)}h_{(2)}S(h_{(3)})n\\
&=& kh_{(1)}\cdot (S(h_{(2)})\cdot (m\ot n)) .
\end{eqnarray*}
This shows two of the four associativity axioms, the other two are proven in the same way.
\end{proof}

Recall that the partial smash product $\ul{B\# H}$ is the direct summand of $B\ot H$ consisting of elements of the form
$$b\# h=b(h_{(1)}\cdot 1)\ot h_{(2)},$$
and it becomes a unital algebra with unit $1_B\# 1_H$ and product
$$(a\# h)(b\# k)=a(h_{(1)}\cdot b)\# h_{(2)}k.$$
One can now easily observe that $\ul{B\# H}$ is a direct summand of $B\ot H$ as partial left $H$-modules.

\begin{lemma}
Consider a partial $H$-module algebra $B$ with globalization $\ol B$.
The morphism
$$\Hom(H,B)\ot H\to \Hom(H,B\ot H), f \ot h \mapsto (k\mapsto f(k)\ot h)$$
is injective. Hence an element $\sum_i  h_i\triangleright \varphi(b_i)\ot h\in \ol B\ot H$ is zero if and only if for all $k\in H$
$$\sum_i  kh_i\cdot b_i\ot h=0.$$
\end{lemma}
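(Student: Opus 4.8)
The plan is to establish the injectivity of the canonical map first and then to read off the stated criterion by a direct evaluation. Write $\Phi:\Hom_k(H,B)\ot H\to \Hom_k(H,B\ot H)$ for the map of the statement, so that $\Phi(\sum_i f_i\ot h_i)(k)=\sum_i f_i(k)\ot h_i$. Throughout I would keep the two copies of $H$ carefully apart: the ``outer'' copy (the source of the Hom-spaces), whose elements I denote by $k$, is treated as a fixed $k$-vector space, while the ``inner'' copy (the right tensor factor, which becomes the second leg of $B\ot H$) is the one I analyse by means of a basis.

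First I would fix a $k$-basis $\{e_\alpha\}_\alpha$ of the inner copy of $H$. As $H$ is free over the field $k$, every element of $\Hom_k(H,B)\ot H$ has a unique expression as a finite sum $\sum_\alpha f_\alpha\ot e_\alpha$ with $f_\alpha\in\Hom_k(H,B)$. Applying $\Phi$ and identifying $B\ot H\cong\bigoplus_\alpha B$ through the same basis, the $e_\alpha$-component of $\Phi(\sum_\alpha f_\alpha\ot e_\alpha)(k)=\sum_\alpha f_\alpha(k)\ot e_\alpha$ is precisely $f_\alpha(k)$. Hence $\Phi(\sum_\alpha f_\alpha\ot e_\alpha)=0$ forces $f_\alpha(k)=0$ for all $\alpha$ and all $k\in H$, i.e.\ every $f_\alpha=0$ and the original tensor vanishes. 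This proves that $\Phi$ is injective; note that this argument needs no finiteness assumption on $H$, since only injectivity (and not surjectivity) is claimed.

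For the consequence I would use that $\ol B=H\triangleright\varphi(B)\subseteq\Hom_k(H,B)$ and that, $k$ being a field, $-\ot_k H$ is exact and so preserves this inclusion; thus $\ol B\ot H\hookrightarrow\Hom_k(H,B)\ot H$, and composing with $\Phi$ shows that the restriction of $\Phi$ to $\ol B\ot H$ is again injective. It then remains to evaluate. Recalling from the construction of the standard dilation that the $H$-action on $\Hom_k(H,B)$ is $(h\triangleright f)(k)=f(kh)$ and that $\varphi(b)(h)=\pi(h)(b)=h\cdot b$, one computes $(h_i\triangleright\varphi(b_i))(k)=\varphi(b_i)(kh_i)=kh_i\cdot b_i$. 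Consequently, for $\xi=\sum_i(h_i\triangleright\varphi(b_i))\ot h\in\ol B\ot H$ (and, more generally, with the right legs carrying an index $i$) one gets $\Phi(\xi)(k)=\sum_i(kh_i\cdot b_i)\ot h$, and the injectivity of $\Phi$ on $\ol B\ot H$ yields that $\xi=0$ if and only if $\sum_i(kh_i\cdot b_i)\ot h=0$ for every $k\in H$, which is exactly the asserted equivalence.

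I do not expect a genuine obstacle here: the content is the familiar fact that $\Hom_k(H,-)$ commutes with tensoring by $H$ on the right up to a canonical injection. The only steps deserving a little care are keeping the two roles of $H$ distinct and phrasing the basis argument so that it remains valid when $\dim_k H$ is infinite; the remaining computation is just the evaluation formula $(h\triangleright\varphi(b))(k)=kh\cdot b$ already recorded in the dilation construction.
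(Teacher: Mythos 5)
Your proposal is correct and follows essentially the same route as the paper: both arguments reduce to writing the tensor with the second legs linearly independent (you via a fixed basis $\{e_\alpha\}$ of $H$ and the identification $B\ot H\cong\bigoplus_\alpha B$, the paper via linear independence of the $h_i$ together with functionals $\alpha\in B^*$) and then concluding that each $f_\alpha$ vanishes on all of $H$. The deduction of the second assertion from the inclusion $\ol B\subseteq\Hom_k(H,B)$ and the evaluation formula $(h_i\triangleright\varphi(b_i))(k)=kh_i\cdot b_i$ is likewise exactly what the paper does.
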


\begin{proof}
Write $\sum_i f_i\ot h_i\in \Hom(H,B)\ot H$ with $h_i$ linearly independent. Suppose that 
$\sum_i  f_i(k)\ot h_i=0$ for all $k\in H$. Consider any functional $\alpha\in B^*$. Then we find that $\sum_i  \alpha(f_i(k))h_i=0$ and since the $h_i$ are linearly independent it follows that $\alpha(f_i(k))=0$ for all $k$, $i$ and $\alpha$. Hence we find that $f_i(k)=0$ for all $i$ and $k$, therefore $f_i=0$ for all $i$. Hence $\sum_i  f_i\ot h_i=0$ and the map is injective as stated.

The second part follows since $\ol B\subset \Hom(H,B)$.
\end{proof}

\begin{theorem}\thlabel{globalsmash}
Let $\cdot : H\otimes B \rightarrow B$ be a partial action of the Hopf algebra $H$ on a unital algebra $B$ and let $\ol B$ be the globalization of $B$. Then the dilation of the partial smash product $\ul{B\# H}$ is a direct summand, as an $H$-module, of the (global) smash product $\ol B\# H$. 
\end{theorem}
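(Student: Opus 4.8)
The plan is to reduce the statement to a computation of the dilation of the partial $H$-module $B\ot H$, exploiting that $\ul{B\# H}$ sits inside $B\ot H$ as a direct summand and that the dilation functor behaves well with respect to such summands.

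First I would recall that, by \leref{parglobtensor}, $B\ot H$ is a partial $H$-module via the diagonal action $h\cdot(b\ot k)=h_{(1)}\cdot b\ot h_{(2)}k$ (with $H$ carrying its regular left module structure), and that $\ul{B\# H}$ is a direct summand of $B\ot H$ in ${}_H\Mm^{par}$, as observed above. Since the dilation functor $D$ is additive (\prref{propdilationfunctor}), it preserves direct summands; hence $D(\ul{B\# H})=\ol{\ul{B\# H}}$ is a direct summand, as an $H$-module, of $D(B\ot H)=\ol{B\ot H}$. It therefore suffices to identify $\ol{B\ot H}$ with the smash product $\ol B\# H$, the latter regarded as an $H$-module through the diagonal action $h\triangleright(a\# k)=h_{(1)}\triangleright a\# h_{(2)}k$, which agrees with $\ol B\ot H$ as a vector space since $\ol B$ is a (possibly non-unital) $H$-module algebra.

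The core of the argument is to produce an $H$-linear isomorphism $\ol{B\ot H}\cong \ol B\ot H$. To this end I would introduce the map
\[
\Lambda:\Hom_k(H,B)\ot H\to \Hom_k(H,B\ot H),\qquad \Lambda(f\ot k)(l)=f(l_{(1)})\ot l_{(2)}k ,
\]
and check directly that it is $H$-linear for the diagonal action on the source and the action $(h\triangleright F)(l)=F(lh)$ on the target, and that it intertwines the canonical embeddings into the respective dilations, $\Lambda(\varphi_B(b)\ot k)=\varphi_{B\ot H}(b\ot k)$. Granting injectivity of $\Lambda$ (discussed below), $H$-linearity then yields $\Lambda\big(H\triangleright(\varphi_B(B)\ot H)\big)=H\triangleright \varphi_{B\ot H}(B\ot H)=\ol{B\ot H}$. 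A short Galois-type computation, using that $h\ot k\mapsto h_{(1)}\ot h_{(2)}k$ on $H\ot H$ is bijective with inverse $h\ot k\mapsto h_{(1)}\ot S(h_{(2)})k$, shows that $H\triangleright(\varphi_B(B)\ot H)=(H\triangleright\varphi_B(B))\ot H=\ol B\ot H$. Hence $\Lambda$ restricts to an $H$-linear isomorphism $\ol B\ot H\cong \ol{B\ot H}$, and composing with the identification $\ol B\ot H=\ol B\# H$ finishes the proof.

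The main obstacle is the injectivity of $\Lambda$, owing to the ``twist'' by the coproduct in its definition. The plan here is to untwist: writing $L_a$ for left multiplication by $a$ in $H$, define the linear endomorphism $\rho$ of $\Hom_k(H,B\ot H)$ by $\rho(F)(l)=(\mathrm{id}_B\ot L_{l_{(2)}})\big(F(l_{(1)})\big)$. A direct verification using the antipode identities $\sum S^{-1}(n_{(2)})n_{(1)}=\epsilon(n)=\sum n_{(2)}S^{-1}(n_{(1)})$ (here the standing hypothesis that $S$ is invertible is essential) shows that $\rho$ is bijective, with inverse $\rho^{-1}(G)(l)=(\mathrm{id}_B\ot L_{S^{-1}(l_{(2)})})\big(G(l_{(1)})\big)$. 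Since by construction $\Lambda=\rho\circ\iota$, where $\iota(f\ot k)(l)=f(l)\ot k$ is exactly the untwisted map proven injective in the previous lemma, injectivity of $\Lambda$ follows immediately. I expect the only delicate points to be the Sweedler-index bookkeeping in checking $\rho\rho^{-1}=\rho^{-1}\rho=\mathrm{id}$ and in the $H$-linearity of $\Lambda$; everything else is formal.
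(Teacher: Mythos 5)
Your proposal is correct and takes essentially the same route as the paper: the same reduction of $\ul{B\# H}$ to a direct summand of $B\ot H$ via additivity of the dilation functor, followed by the identification $\ol{B\ot H}\cong \ol B\ot H$, where your map $\Lambda$ restricted to $\ol B\ot H$ coincides with the paper's map $\xi$ and relies on the same injectivity lemma for $\Hom(H,B)\ot H\to \Hom(H,B\ot H)$. Your factorization $\Lambda=\rho\circ\iota$ through an explicit twist bijection is a slightly cleaner packaging of the well-definedness computations that the paper performs directly on spanning elements, but the substance is identical.
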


\begin{proof}
By \leref{parglobtensor}, $B\ot H$ is a partial $H$-module. Let us show that $\ol{B\ot H}\cong \ol B\ot H$

We define the maps
\begin{eqnarray}
\zeta:\ol{B\ot H}\to {\ol B}\ot H,&\quad& \zeta(\sum_i h_i\triangleright \varphi(b^i\ot h^i))=\sum_i h_{i(1)}\triangleright \varphi(b^i)\ot h_{i(2)}h^i. \eqlabel{zeta}\\
\xi: {\ol B}\ot H\to \ol{B\ot H},&\quad& \xi( (\sum_i h_i\triangleright \varphi(b_i))\ot h)=\sum_i h_{i(1)}\triangleright \varphi(b_i\ot S(h_{i(2)})h).\nonumber
\end{eqnarray}
Let us first check that $\zeta$ is well-defined. Suppose that $\sum_i h_i\triangleright \varphi(b^i\ot h^i)=0$, i.e. for all $k$, we have 
$$\sum_i  k_{(1)}h_{i(1)}\cdot b^i\ot k_{(2)}h_{i(2)}h^i=0 .$$
Then we find for all $k$,
\begin{eqnarray*}
\sum_i kh_{i(1)}\cdot b^i\ot h_{i(2)}h^i&=&\sum_i k_{(1)}h_{i(1)}\cdot b^i\ot S^{-1}(k_{(3)})k_{(2)}h_{i(2)}h^i=0 ,
\end{eqnarray*}
hence
$$\sum_i kh_{i(1)}\cdot b^i\ot h_{i(2)}h^i=0.$$
By the previous lemma, we then find that $\sum_i h_{i(1)}\triangleright \varphi(b^i)\ot h_{i(2)}h^i=0$.

By a similar argument, we find that $\xi$ is well-defined. First remark that $\ol B\ot H \subset \Hom(H,B)\subset \Hom(H,B\ot H)$. Hence 
$(\sum_i h_i\triangleright \varphi(b_i))\ot h=0$
in $\ol B \ot H$ if and only if for all $k\in H$, we have 
$$(\sum_i kh_i\cdot b_i)\ot h=
0$$
in $B\ot H$. But this implies that for all $k\in H$ 
\begin{eqnarray*}
\sum_i k_{(1)}h_{i(1)}\cdot b_i\ot k_{(2)}h_{i(2)}S(h_{i(3)})h&=&
\sum_i k_{(1)}h_{i}\cdot b_i\ot k_{(2)}h
=0,
\end{eqnarray*}
which means exactly that 
$$\xi((\sum_i kh_i\cdot b_i)\ot h)=\sum_i h_{i(1)}\triangleright \varphi(b_i\ot S(h_{i(2)})h)=0$$ 
in $\ol{B\ot H}$.

Let us check that $\zeta$ and $\xi$ are mutual inverses:
\begin{eqnarray*}
\xi\circ \zeta(\sum_i h_i\triangleright \varphi(b^i\ot h^i)) &= & \xi(\sum_i h_{i(1)}\triangleright \varphi(b^i)\ot h_{i(2)}h^i)\\
&=& \sum_i h_{i(1)}\triangleright \varphi(b^i \ot S(h_{i(2)})h_{i(3)}h^i) \\
&=& \sum_i h_{i}\triangleright \varphi(b^i \ot h^i)
\end{eqnarray*}
and 
\begin{eqnarray*}
\zeta\circ \xi((\sum_i h_i\triangleright \varphi(b_i))\ot h) &=& \zeta(\sum_i h_{i(1)}\triangleright \varphi(b_i\ot S(h_{i(2)})h) )\\
&=& \sum_i (h_{i(1)}\triangleright \varphi(b_i) ) \ot h_{i(2)}S(h_{i(3)}) h \\
&=& \sum_i (h_{i}\triangleright \varphi(b_i) ) \ot h .
\end{eqnarray*}

Moreover $\ul{B\# H}$ is a direct summand of $B\ot H$ as partial $H$-module, 
it follows that $\ol{\ul{B\# H}}$ is a direct summand of $\ol{B\ot H}\cong \ol B\ot H$. Since $\ol B\#H$ is just $\ol B\ot H$ as left $H$-module we obtain the stated result.
\end{proof}

\subsection{The Morita equivalence between partial and global smash products}

Let $B$ be a partial $H$-module algebra and consider the partial $H$-module $B\ot H$ as in \leref{parglobtensor}. Combining the inclusion map $\varphi:B\ot H \to \ol{B\ot H}$ from the standard dilation with the split monomorphism $\zeta:\ol{B\ot H}\to {\ol B}\ot H$ from \thref{globalsmash}, we obtain an injective map
$\Phi :B\ot H\rightarrow \overline{B}\ot H$ given by
\[
\Phi (b\ot h)=\varphi_B (b) \ot h.
\]
Restricting this map to the direct summand $\ul{B\# H}$ of $B\ot H$, we obtain 
$\Phi :\underline{B\# H}\rightarrow \overline{B}\# H$ given by
\[
\Phi (\ul{b\# h})=\varphi_B (b(h_{(1)}\cdot 1)) \# h_{(2)} 
= \varphi_B (b)(h_{(1)}\triangleright \varphi_B(1) ) \# h_{(2)} 
= \varphi_B (b(h_{(1)}\cdot 1))(h_{(2)}\triangleright \varphi_B (1) ) \# h_{(3)} .
\]
To see that the three expressions for $\Phi (\ul{b\# h})$ are indeed equal, consider any $b\in B$ and $h,k\in H$, then we find
\begin{eqnarray*}
\varphi_B(b(h\cdot 1))(k)
&=& k\cdot (b(h\cdot 1))\\
&=& (k_{(1)}\cdot b)(k_{(2)}\cdot (h\cdot 1))\\
&=& (k_{(1)}\cdot b)(k_{(2)}\cdot 1)(k_{(3)}h\cdot 1)\\
=\big(\varphi_B(b)(h\triangleright \varphi(1))\big)(k)&=& (k_{(1)}\cdot b)(k_{(2)}h\cdot 1)\\
&=& (k_{(1)}\cdot b) (k_{(2)}h_{(1)}\cdot 1)(k_{(3)}h_{(2)}\cdot 1)\\
&=& (k_{(1)}\cdot b) (k_{(2)}\cdot 1)(k_{(3)}h_{(1)}\cdot 1)(k_{(4)}h_{(2)}\cdot 1)\\
&=& (k_{(1)}\cdot b)(k_{(2)}\cdot (h_{(1)}\cdot 1))(k_{(3)}h_{(2)}\cdot 1)\\
=\varphi_B (b(h_{(1)}\cdot 1))(h_{(2)}\triangleright \varphi (1))(k) &=&
(k_{(1)}\cdot (b(h_{(1)}\cdot 1)))(k_{(2)}h_{(2)}\cdot 1)
\end{eqnarray*}
Moreover, one can easily check that $\Phi$ is an algebra map. In \cite{AB}, a Morita context with surjective Morita maps was constructed between the partial smash product $\underline{B\# H}$ and the usual smash product $\overline{B}\# H$. More explicitly, one has an $\underline{B\# H}$-$\overline{B}\# H$ bimodule $P$ and a $\overline{B}\# H$-$\underline{B\# H}$ bimodule $Q$ given by
\begin{eqnarray*}
P &=& \Phi (B\otimes H)=\left\{ \sum_{i=1}^n \varphi_B (a_i )\otimes h_i \; | \; a_i \in B, \; h_i \in H, \; n\in \mathbb{N} \right\} \subseteq \overline{B}\# H, \\
Q &=& \left\{ \sum_{i=1}^n h^i_{(1)} \triangleright \varphi_B (a_i) \otimes h^i_{(2)} \; | \; a_i \in B, \; h_i \in H, \; n\in \mathbb{N} \right\} \subseteq \overline{B}\# H ,
\end{eqnarray*}
where the bimodule actions and Morita maps 
\begin{eqnarray*}
\tau:P\ot_{\ol B\# H}Q&\to& \ul{B\# H}\\
\mu:Q\ot_{\ul{B\# H}}P &\to& \ol B\# H
\end{eqnarray*}
are all defined by means of the multiplication in $\overline{B}\# H$, after identifying $\ul{B\# H}$, $P$ and $Q$ with subspaces of $\overline{B}\# H$. Since $\varphi_B (B)$ is an ideal in $\overline{B}$, one can easily check that all actions and the Morita maps are well-defined. Moreover, as proven in \cite[Theorem 4]{AB}, the Morita maps are surjective. 
Since in general, $\ol B$ and hence $\ol B\# H$ are not unital algebras, the surjectivity of $\mu$ does not necessarily imply its bijectivity (as it is the case for $\tau$, by classical Morita theory). However, 
we can use the results from \cite{BV2} to conclude on the following result that generalizes \cite{AB}. Recall that a {\em firm} (right) module over a non-unital ring $R$ is a (right) module $M$ such that the multiplication map induces an isomorphism $M\ot_R R\cong M$. A ring $R$ is called firm if it it firm as a (left or right) module over itself. If $R$ has a unit, then firm modules over $R$ are exactly the (unital) modules. 

The terminology used above is due to Quillen (see e.g. \cite{BV2}). In literature, other names are in use as well (e.g.\ `unital module' for firm module or `tensor idempotent ring' for firm ring), however we believe that this terminology causes the least confusion.

\begin{proposition}
Let $B$ be a partial $H$-module algebra. Then 
\begin{enumerate}[(i)]
\item the categories of (unital) $\ul{B\# H}$-modules and of firm $\overline{B}\# H$-modules are equivalent;
\item the above categories are moreover equivalent with the categories of firm modules over the firm ring $\overline{B}\# H\ot_{\overline{B}\# H}\overline{B}\# H$.
\end{enumerate}
\end{proposition}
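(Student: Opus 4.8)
The plan is to recognize the surjective Morita context of \cite{AB} as an instance of the firm Morita theory of \cite{BV2}, and to read off both equivalences from that machinery. Write $R=\ul{B\# H}$, which is unital, and $S=\ol B\# H$. The first point to record is that $S$ is idempotent: since $\ol B$ is idempotent by \prref{AB}, every $a\in\ol B$ factors as $a=\sum_i a_ia_i'$, whence $a\# h=\sum_i(a_i\# 1_H)(a_i'\# h)$ shows $S=SS$. By \cite[Theorem 4]{AB} the Morita maps $\tau:P\ot_SQ\to R$ and $\mu:Q\ot_RP\to S$ are surjective. Since $R$ is unital, writing $1_R=\sum_i\tau(p_i\ot q_i)$ and using the two Morita identities shows in the usual way that $\tau$ is already bijective; the asymmetry of the statement comes precisely from the fact that the same argument cannot be run for $\mu$, as $S$ has no unit.

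For part (i) I would appeal to the main theorem of \cite{BV2}: given a Morita context with surjective pairings in which $R$ is unital and $S$ is idempotent, the functors $Q\ot_R-$ and $P\ot_S-$ restrict to an equivalence between unital $R$-modules and firm $S$-modules. The unit and counit of this adjunction are built from $\tau$ and $\mu$. On the $R$-side one has $P\ot_SQ\ot_RM\cong R\ot_RM\cong M$ from the bijectivity of $\tau$. On the $S$-side one uses that $\mu$ realizes $Q\ot_RP$ as the firmification $\tilde S:=S\ot_SS$, so that for any firm $S$-module $N$ one gets $Q\ot_RP\ot_SN\cong \tilde S\ot_SN\cong S\ot_SN\cong N$. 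For this to make sense one must know that $Q\ot_R-$ lands in firm $S$-modules and $P\ot_S-$ lands in unital $R$-modules; the former follows once $Q$ is a firm left $S$-module and $P$ a firm right $S$-module, which I would check from the explicit descriptions of $P$ and $Q$ as subspaces of $\ol B\# H$ together with the facts that $\ol B$ is idempotent and $\varphi_B(B)$ is an ideal of $\ol B$ (\prref{AB}).

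Part (ii) is then the observation, again available in \cite{BV2}, that for an idempotent ring $S$ the firmification $\tilde S=\ol B\# H\ot_{\ol B\# H}\ol B\# H$ is a genuinely firm ring and that the categories of firm $S$-modules and of firm $\tilde S$-modules coincide (via induction and restriction along the multiplication $\tilde S\to S$). Combined with part (i), this identifies all three categories.

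The step I expect to be the main obstacle is checking that the context of \cite{AB} really satisfies the hypotheses of the firm Morita theorem --- concretely, that $P$ and $Q$ are firm one-sided $S$-modules and that $\mu$ identifies $Q\ot_RP$ with $\tilde S$ rather than with $S$. This is the conceptual crux: classical Morita theory would require $\mu$ to be bijective and hence $S$ to be unital, which fails here, and the role of \cite{BV2} is exactly to show that replacing unital $S$-modules by firm $S$-modules (equivalently, passing to the firm ring $\tilde S$) repairs the equivalence. Once the firmness of $P$ and $Q$ is established from the idempotency and ideal properties recorded above, both (i) and (ii) follow formally.
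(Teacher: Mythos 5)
Your proposal is correct and follows essentially the same route as the paper: both start from the surjective Morita context of \cite{AB}, observe that $\tau$ is automatically bijective because $\ul{B\# H}$ is unital while $\mu$ need not be, and then invoke the firm Morita theory of \cite{BV2} (the Kato--Ohtake lemma for part (i), and the idempotency of $\ol B\# H$ together with \cite[Theorem 1.1]{BV2} for part (ii)). The verifications you flag as the ``main obstacle'' (firmness of $P$ and $Q$, identification of $Q\ot_R P$ with the firmification) are exactly what the cited results from \cite{BV2} package up, and the paper likewise delegates them to those references.
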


\begin{proof}
\ul{(i)}. 
We already know that the Morita map $\mu$ is bijective. Thanks to the Kato-Osake theorem, in the form of  \cite[Lemma 1.10]{BV2}, we also know that $\tau$ induces a bijective morphism 
$$id\ot \mu: M\ot_{\overline{B}\# H}Q\ot_{\ul{B\# H}}P \to M\ot_{\overline{B}\# H}\ol B\# H\cong M$$
for any firm $\ol B\#H$-module $M$. Hence we obtain the demanded equivalence of categories.
\\
\ul{(ii)}. We know that $\ol B$ is an idempotent ring (see \prref{AB}), hence this follows from \cite[Theorem 1.1]{BV2}.
\end{proof}

Finally, let us consider the case where $B={A_{par}}$ is the partial $H$-module algebra satisfying $H_{par}=\ul{{A_{par}}\# H}$. Since we know that $H_{par}$ is a Hopf algebroid, its category of modules is monoidal. Hence, we can use the Morita equivalence described above, to obtain a monoidal structure on the category of firm $\ol {A_{par}}\#H$-modules. Although in general, we don't know if $\ol {A_{par}}\#H$ has a Hopf algebroid structure, we can use the monoidal structure on its module category to endow it with a {\em Hopfish} structure, in the following sense.

\begin{defi}  A $k$-algebra $\mathcal{A}$ is said to be a sesqui-unital sesqui-algebra (see \cite{TWZ}) if there are two bimodules 
$\nabla \in {}_{\mathcal{A}} \mathcal{M}_{\mathcal{A}\otimes \mathcal{A}}$ and $E \in {}_{\mathcal{A}} \mathcal{M}_k$ satisfying
\begin{enumerate}
\item[(i)] $ \nabla \otimes_{\mathcal{A}\otimes \mathcal{A}} (\nabla \otimes \mathcal{A}) \cong 
\nabla \otimes_{\mathcal{A} \otimes \mathcal{A}} (\mathcal{A}\otimes \nabla )$;
\item[(ii)] $\nabla \otimes_{\mathcal{A}\otimes \mathcal{A}} (E \otimes \mathcal{A})\cong \mathcal{A} \cong \nabla \otimes_{\mathcal{A} \otimes \mathcal{A}} (\mathcal{A} \otimes E )$.
\end{enumerate}
These isomorphism moreover satisfy appropriate coherence axioms, that make $\Aa$ into a pseudo comonad in the bicategory of algebras, bimodules and bimodule morphisms.
\end{defi}

It is known \cite{dCV,V} that a (unital) algebra is a sesqui-unital sesqui-algebra if and only if its category of modules is closed monoidal (without assumptions on the existence of a monoidal fibre functor). 
In \cite{TWZ}, a notion of antipode for sesqui-unital sesqui-algebras was proposed, which was however rather rigid. 
In \cite{dCV}, an alternative notion of antipode is developed and it is shown that this new notion is invariant under Morita equivalence. For sake of completeness, we recall that notion here.

\begin{defi}
A sesqui-unital sesqui-algebra $\mathcal{A}$ is called a {\em Hopfish} algebra (see \cite{dCV}) if there is a $\mathcal{A}$-$\mathcal{A}^{op}$ bimodule $\Sigma$ that is finitely generated and projective as a right $\mathcal{A}^{op}$ module and the category of (left) $\Aa$-modules that are finitely generated and projective is rigid with duals given by $\Sigma\ot_{\Aa^{op}}X^*$. We  call $\Sigma$ the antipode of $\Aa$.
\end{defi}

In fact, a $\Sigma$ is an antipode of $\Aa$ if and only if there is a canonical isomorphisms of $\Aa$-$\Aa^{op}$-bimodules
\[
\underline{\mbox{can}} :\nabla \otimes_{\mathcal{A}\otimes \mathcal{A}} (A\otimes \Sigma) \rightarrow {}{\Hom_\mathcal{A}} (\nabla_1 , \mathcal{A})
\]
where $\nabla_1=\nabla$ as $k$-module and endowed with a right $\Aa$-module structure 
$$x\cdot a=x(a\ot 1)$$
for any $x\in\nabla_1=\nabla$. The two remaining $\Aa$-module structures of $\nabla$ are used to endow ${\Hom_\mathcal{A}} (\nabla_1 , \mathcal{A})$ with an $\Aa$-$\Aa^{op}$ bimodule structure.

If $\Hh$ is a Hopf algebroid with base $k$-algebra $A$, then it can endow it with the structure of a Hopfish $k$-algebra as follows:
\begin{itemize}
\item the comultiplication bimodule is $\Hh\ot\Hh$ with the regular right $\Hh\ot\Hh$-action and the left action given by
$$a\cdot (b\ot b')=\Delta(a)(b\ot b');$$
\item The counit is the base algebra $A$ with left $\Hh$-action $a\cdot 1_A=\epsilon(a);$
\item The antipode $\Hh$-$\Hh^{op}$ bimodule is $\Hh^{op}$ with regular right $\Hh^{op}$ action and left $\Hh$ action induced by the antipode map $S:\Hh\to \Hh^{op}$.
\end{itemize}

At least in the case when $\ol {A_{par}}\# H$ is a unital algebra (e.g. $H$ is a finite group algebra), then it follows from the discussion above that $\Aa=\ol {A_{par}}\# H$, being Morita equivalent to the Hopf algebroid $H_{par}$, is itself a Hopfish algebra, whose structure is given by
\begin{enumerate}
\item The comultiplication $\Aa$-$\Aa\ot \Aa$ bimodule  $\nabla=Q\otimes_{H_{par}} (P\otimes _{A_{par}} P);$
\item The counit $k$-$\Aa$ bimodule 
$
E = Q\otimes_{H_{par}} {A_{par}} ;
$
\item The antipode $\Aa$-$\Aa^{op}$ bimodule $\Sigma$ is the quotient of the $k$-module $Q\ot Q$ by its subspace generated by elements of the form
$$qx\ot q'-q\ot q'S(x),$$
where $q,q'\in Q$ and $x\in H_{par}$. The $\Aa$-$\Aa^{op}$ bimodule structure on $\Sigma$ is given by
$$a\cdot (q\ot q')\cdot b = aq\ot bq'$$
for all $a,b\in\Aa$.
\end{enumerate}

\subsection*{Acknowledgement}
{The first author is partially supported by CNPq, grant number 306583/2016-0.
The third author would like to thank the FWB (F\'ed\'eration Wallonie-Bruxelles) for financial support via the ARC-project ``Hopf algebras and the symmetries of non-commutative spaces'' as well as the FNRS for the MIS-project ``Antipode''.\\
All authors thank the anonymous referee for his/her useful comments that improved the presentation of this paper.
}

\end{document}